\numberwithin{equation}{section}
\newtheorem{Theorem}{Theorem}[section]
\newtheorem{Proposition}{Proposition}[section]
\newtheorem{Lemma}{Lemma}[section]
\newtheorem{Remark}{Remark}[section]
\newtheorem{Corollary}{Corollary}[section]
\newtheorem{Definition}{Definition}[section]
\def\cosh{\mbox{{\rm cosh}}}
\def\exp{\mbox{{\rm exp}}}
\def\q{{\bf q}}
\def\x{{\bf x}}
\def\p{{\bf p}}
\def\u{{\bf u}}
\def\R{\mathbb{R}}
\def\eps{\epsilon}
\title{Global weak solutions to the relativistic BGK equation}
\author[Juan Calvo]{Juan Calvo}
\address[Juan Calvo]{\newline Departamento de Matem\'{a}tica Aplicada and Research Unit ``Modeling Nature''
(MNat), Universidad de Granada, Granada, 18071, Spain} \email{juancalvo@ugr.es}
\author[Pierre-Emmanuel Jabin]{Pierre-Emmanuel Jabin}
\address[Pierre-Emmanuel Jabin]{\newline  CSCAMM and Dept. of Mathematics,  and Research Unit ``Modeling Nature'',
                                University of Maryland,
                                College Park, MD 20742-3289, USA} \email{pjabin@umd.edu}
\author[Juan Soler]{Juan Soler}
\address[Juan Soler]{\newline Departamento de Matem\'{a}tica Aplicada and  Research Unit ``Modeling Nature''
(MNat), Universidad de Granada, Granada, 18071, Spain} \email{jsoler@ugr.es}
\begin{document}

\subjclass[2010]{35Q75, 35Q20, 76P05, 82C40} 
\keywords{BGK, Relativistic evolution equation, Boltzmann, Kinetic theory, H-theorem}

\thanks{\textbf{Acknowledgment.}  This work has been partially supported by the MINECO-Feder (Spain) research grant number MTM2014-53406-R, the Junta de Andaluc\'ia (Spain) Project FQM 954, Universidad de Granada (``Plan propio de investigaci\'on, programa 9'') through FEDER funds. Part of this work was done while J. Calvo and J . Soler were visiting CSCAMM. P.E. Jabin is partially supported by NSF Grant 1312142 and by NSF Grant RNMS (Ki-Net) 1107444.}

\begin{abstract}
In this paper the global existence of weak solutions to the relativistic BGK model for the relativistic Boltzmann equation is analyzed. The proof relies on the strong compactness of the density, velocity and temperature under minimal assumptions on the control of some moments of the initial condition together with the initial entropy.
\end{abstract}

 \maketitle
\section{Introduction}
This paper deals with the study of weak solution of the relativistic BGK model, under minimal hypothesis of boundedness of some moments and of the entropy associated with the initial data, which allows to give a meaning to the non-linear term by means of averaging lemmas.

Relativistic gases are composed of molecules moving at speeds comparable to the speed of light. Those gases feature prominently in star dynamics, galaxy formation, free-electron lasers, high energy particle beams, controlled thermonuclear fusion and other topics... The standard tool to describe gas dynamics, be it classical, quantum or relativistic, is kinetic theory. 

Historically, classical kinetic theory was developed earlier; the central object of the theory is the so-called distribution function, a density over phase space describing the number of Newtonian gas particles in an infinitesimal volume element about a given point in phase space. Arguably the whole subject started with the early works of Maxwell and Boltzmann, who posed an evolution equation for the distribution function of a rarefied gas (the celebrated Boltzmann equation, where dynamics are mainly driven by binary collisions) together with the H-theorem about the relaxation to equilibrium. It also follows from the theory that a gas locally close to equilibrium can be well described as a fluid. This fruitful connection between classical kinetic theory and fluid dynamics has been established by various developments on the theory of hydrodynamic limits, see e.g. \cite{Cercignani, Golse2005,Sone}.
 Furthermore, this connection has inspired a whole chapter in computational fluid dynamics; the so-called lattice Boltzmann schemes  
 simulate a given fluid taking advantage of the fact that the fluid can be described as some limit regime of a gas and therefore using some discrete realization of Boltzmann's equation. However, more often than not the computational implementation of Boltzmann's gas dynamics constitutes a delicate problem. One way out of it is given by the so-called model equations: kinetic equations that are conceptually simpler but nevertheless have some properties in common with Boltzmann's equation, particularly some form of the H-theorem and their behavior on the hydrodynamical regime. 

Perhaps the most popular model equation is the one introduced by Bhatnagar, Gross, Krook \cite{BGK} and Welander \cite{Welander}, the BGK model for short (for other model equations see e.g. \cite{Cercignani}). The idea is to 
take into account just the global effect of fluid particle interactions. This is done by means of a collision operator that replaces the complicated integral describing two-body interactions in Boltzmann's equation by a relaxation operator depending only on macroscopic quantities. This operator is constructed in such a way that mass, momentum and energy conservation hold, together with an entropy dissipation property. Despite the apparent simplicity of this representation, it is able to replicate most of the basic hydrodynamics properties (see the study of hydrodynamic limits in \cite{Laure, Laure2003} -see also \cite{Abdel1999, Abdel2004, Abdel2010} and references therein-),
 which has constituted an obvious motivation for its study. From the numerical point of view, the BGK collision operator is more amenable than Boltzmann's collision integral. Therefore, the BGK model is used as the basis of a number of lattice Boltzmann schemes \cite{Pareschi,Perthame1990,Succi}...
 It has been also used for the numerical simulations of dilute gases instead of Boltzmann's equation. However, the BGK collision operator is mathematically involved due to the presence of an exponential nonlinearity instead of a quadratic interaction; the first existence result for the BGK model, although  simpler than the celebrated DiPerna--Lions theory for the Boltzmann equation \cite{DL}, was derived later \cite{Perthame}.

If gas particles are moving at speeds comparable to the speed of light, the classical description in terms of Boltzmann's equation is not accurate and we must use the tools of relativistic kinetic theory instead; reference monographs for this subject are e.g. \cite{CercignaniKremer,deGroot}.
  This branch of kinetic theory revolves around relativistic generalizations of Boltzmann's equation for the relativistic phase
distribution $f(t,\x,\q) \ge 0$ depending on time $t \in [0,\infty)$, space $\x \in \R^3$  and momentum $\q \in \R^3$. If a relativistic gas is assumed to be non-degenerate (i.e. it obeys the Maxwell--Boltzmann statistics) and its dynamics are driven by binary collisions, we can describe its temporal evolution in terms of the relativistic Boltzmann equation, 
\[
\partial_t f + \frac{\q}{q^0} \cdot \nabla_{\x} f =\frac{m c^2}{q^0} Q (f, f) ,
\]
 where $m$ denotes the mass, $c$ represents the light speed in vacuum and $q^0 := c \sqrt{(mc)^2 + |\q|^2}$.  
Here $Q(f,f)$ denotes the non-linear quadratic (binary) collision term of the Boltzmann equation, which incorporates the intrinsic properties such that the conservation laws for particle number and energy--momentum tensor hold for this model.

Global steady states of this model are the well-known J\"uttner equilibria, also known as relativistic Maxwellians, which describe the state of
a relativistic gas in equilibrium, depending on five parameters: density $n\ge 0$, inverse temperature $\beta > 0$ and velocity $\u \in \R^3$, as follows
\begin{equation} \label{JD}
 J(n,\beta,\u;\q) = \frac{n}{(mc)^3 M(\beta)} \exp \left\{- \frac{\beta}{m c^2} (\sqrt{1+|\u|^2} c \sqrt{(mc)^2+|\q|^2} - \u \cdot \q) \right\}, \end{equation} 
where 
\begin{eqnarray}
\label{eme}
 M(\beta)
 =\int_{\R^3}\exp\left\{ -\beta \sqrt{1+|\p|^2}\right\}d
\p.
\end{eqnarray}
Note that with this notation $\beta$ is
dimensionless and so is $ M(\beta)$; the equilibrium temperature is actually given by $m c^2 /(k_B \beta)$ with $k_B$ the Boltzmann constant. 

Let us mention here some results about the relativistic Boltzmann equation in the literature. In a global regime very close to a J\"uttner distribution, Dudy\'nski and Ekiel-Je$\dot{\rm z}$ewska \cite{DE2} proved that the linear relativistic Boltzmann equation admits unique solutions in $L^2$. The existence of global-in-time renormalized 
solutions, {\it \`a la} DiPerna-Lions \cite{DL}, for large data 
 were shown  by the same authors in \cite{DE4}, using the causality of the relativistic Boltzmann equation \cite{DE2,DE3}. 
In  \cite{GS1}, Glassey and Strauss proved  the global existence, uniqueness and stability in a periodic domain of smooth solutions that are initially close to a relativistic Maxwellian.  
The case of the whole space was considered in \cite{GS2}, while the extension to the relativistic-Vlasov--Maxwell--Boltzmann  and the relativistic-Vlasov--Maxwell--Landau equations were analyzed by Guo and Strain in \cite{GSt1,GSt2}.
In \cite{A}, Andr\'easson proved the $L^1$ convergence to equilibrium for large initial data that are not necessarily close to an equilibrium solution. In \cite{Strain1,Strain2}, Strain studied the soft potential relativistic Boltzmann equation, by proving global existence, uniqueness, and rapid time convergence rates for close-to-equilibrium solutions.  The study of limit models of relativistic Boltzmann equations under physically relevant regimes has also been conducted. Newtonian limits have been reported in \cite{C,GS1,HKLN,Strain}, for different regimes. The hydrodynamic limit to the relativistic Euler fluid equations has been worked out in \cite{Speck} -see also \cite{Calvo}. 

Many of the computational methods that have been developed for the relativistic Euler equations are based on macroscopic, continuum descriptions -see \cite{Marti} for a review. However, there is room for the development of numerical schemes based on model equations for the relativistic Boltzmann equation, e.g. \cite{Chen,Kunik2003a, Kunik2003b,Kunik2004,Mendoza,Mendoza_bis}. 
 It is therefore interesting to develop our mathematical understanding about relativistic generalizations of the classical BGK model. We shall adopt here a description  
based on the Marle model \cite{Marle1,Marle2}, which can be written as
\begin{equation*}
\label{relBGK}
\partial_t f+ \frac{\q}{q^0} \cdot\nabla_{\x} f= \frac{mc^2\omega}{q^0
} (J_f - f),
\end{equation*}
where $\omega$ denotes the collision frequency. This relativistic BGK model satisfies the same conservation laws that the relativistic Boltzmann equation, as we explain below. {For simplicity and for the kind of analysis that this paper proposes to develop, from now on we can consider a rescaling of the variables so that the physical constants are all equal to one. }

The existing mathematical literature covers different aspects of the relativistic BGK model, 
see \cite{BCNS,Kunik2004} and the references therein. Defining the physical parameters of the model correctly is of great importance when paramount issues such as their relationship with relativistic macroscopic models, as for example with the Euler equations, are addressed. In this sense, the relativistic BGK model is the mesoscopic key to understanding the dynamics of relativistic fluids, as we have pointed out before. The aforementioned analysis of the model's physical parameters can be found in \cite{BCNS},
a study that becomes essential for the scaling and analysis of the classical, ultra-relativistic and hydrodynamical limits. In \cite{BCNS} it is also studied the maximum entropy principles, as well as the analysis of the linearized operator and the existence of the linearized BGK relativistic model near the global J\"uttner distribution.
The global existence of the nonlinear relativistic BGK model  together with fast-in-time decay with any polynomial rate of convergence to equilibrium,  for a close-enough to equilibrium family of initial conditions, have been analyzed by Bellouquid, Nieto and Urrutia in \cite{BNU}, using parallel arguments to those in \cite{Strain1,Strain2}. {The existence of steady state solutions has been analyzed in \cite{Hwangbis} for the problem in a slab with inflow boundary conditions}.

We note that in the case of the classical BGK model, Perthame \cite{Perthame} established the global existence of weak solutions by using an approximate BGK operator that truncates the temperature. The strong convergence of the moments (mass, velocity and temperature) was derived via an averaging  lemma. 

The aim of this paper is to explore the former ideas in the relativistic context. The difficulties to extend this classical result to the relativistic case are multiple. To begin with, the density-to-momenta map is not Lipschitz continuous, as opposed to the case of the classical BGK model. Therefore, one has to find suitable regularizations such that the truncated relativistic BGK operator gives rise to a well-posed approximate problem. Then, the standard averaging results should be generalized to the relativistic case and, even proving strong convergence of the moments, we have to identify the nonlinearity in the limit, which does not depend on the moments in a direct way. All in all, the extension of these techniques to prove existence for other model equations in relativistic kinetic theory -e.g. the Anderson--Witting model \cite{Anderson,CercignaniKremer,Hwang}- seems feasible.

The paper is structured as follows: Section \ref{sec:2} is devoted to introduce the notations and basic objects that are needed to write down the BGK--Marle model and our global existence result. Existence will be shown by means of an approximating scheme. This approximating scheme is introduced and studied in Section \ref{sec:3}. Section \ref{sec:4} investigates the time evolution of the entropy functional and the a priori estimates stemming from it. This is crucial as it enables to handle the nonlinearities in Section \ref{sec:5}, where we pass to the limit in the approximating scheme to construct solutions of the original BGK--Marle model.


\section{Preliminaries and statement of the problem}
\label{sec:2}
\subsection{General conventions}
From now on, generic positive constants will be denoted by $C$, their value may change from line to line. We will write $C(a,b,\ldots)$ if we are to specify that the expression of $C$ depends on the quantities $a,b$, etc. We write $\chi_A$ for the indicator function of a set $A \subset \R^d$, that is, $\chi_A(x)=1$ if $x\in A$ and zero otherwise. We will say that two real functions $f$ and $g$ are asymptotically equivalent at zero (resp. infinity) if
$$
\lim_{x\to 0} \frac{f(x)}{g(x)} = 1\quad \mbox{resp.}\quad \lim_{x\to \infty} \frac{f(x)}{g(x)} = 1
$$
and we shall denote this just by $f \sim g$; the context will make clear if we refer to equivalence at zero or infinity. 

The space-time coordinates in the four-dimensional Minkowsky's space $\mathbb{M}$ are $x^\mu$, $\mu = 0, 1, 2, 3$, with
$x^0 =t$ for the time and $x^1, x^2, x^3 $ for the position. The metric tensor $g_{\mu \nu}$ and its inverse $ g^{\mu \nu}$ are given by
$$
g_{\mu \nu}= g^{\mu \nu}=1, \ \mbox{  if} \ \mu=\nu =0, \quad -1, \ \mbox{  if} \ \mu=\nu=
1,2,3 \quad \mbox{and}  \quad 0, \ \mbox{  if} \ \nu \neq \mu.
$$
By default Greek indices will run from $0$ to $3$. With the aid of the metric tensor we can  perform the operations of \emph{raising} and \emph{lowering} indices. That is, for any four-dimensional vector $v^\mu$ (four-vector hereafter),
$$
  g_{\alpha \nu}v^\nu = v_\alpha \quad \mbox{and}\quad g^{\alpha \nu}v_\nu = v^\alpha.
  $$
Here and in the sequel we use  Einstein's summation convention,
meaning that any index that appears twice in an expression (once as a sub-index and once as a super-index),
is understood to be summed over its whole range. In the sequel we understand $v^\mu$ as a (four)-vector and $v_\mu$ as the associated covector. We will always work on Minkowsky's space, hence $v^\alpha=-v_\alpha$ if $\alpha\neq 0$ and $v^0=v_0$. This works in the same way for general tensor objects.

We will also consider vectors in the Euclidean three-dimensional space, which we will always denote by bold characters. Then we use the standard notations $|\mathbf{v}| $ and $\mathbf{v} \cdot \q $ for the euclidean norm and scalar product respectively. Furthermore, we will restrict ourselves to work with unit rest mass particles. For that aim, we shall consider
  the following subset of Minkowsky's space (recall that the tangent space at any $x^\mu \in \mathbb{M}$ is itself isometric to $\mathbb{M}$):
$$
\begin{array}{cl}
\mathbb{M}_1 & := \{q^\mu \in \R^4/ q^\mu q_\mu=-1\}
\\ 
& \hspace{0.1cm} = \{q^\mu \in \R^4/ q^\mu=(\sqrt{1+|\q|^2},\q) \, \mbox{for some}\ \q \in \R^3\}.
\end{array}
$$
This is a three-dimensional timelike sub-manifold of Minkowsky's spacetime.

In relativistic kinetic theory, distribution functions and their (evolution) equations are defined over the tangent bundle of the underlying spacetime, whose structure may depend itself on the distribution function -e.g. the case of Vlasov--Einstein's kinetic model \cite{Andreasson, Choquet-Bruhat, Rein}. However, when gravitational effects are not relevant (i.e. we are in the framework of special relativity, that is, the underlying spacetime is $\mathbb{M}$ no matter the distribution function under consideration) the tangent bundle is diffeomorphic to $\mathbb{M}\times \mathbb{M}$ -which, as a set, is just $\R^8$. It is therefore customary to regard distribution functions to be defined on classical function spaces as it is done in non-relativistic kinetic theory. For that, we first restrict to the (future-pointing) {\em mass shell}, that is $\mathbb{M}\times \mathbb{M}_1$, which is a geodesically invariant, seven-dimensional manifold of the tangent bundle. This corresponds to particles with unit rest mass that move forward in time. Let $(x^\mu,q^\mu)$ denote the coordinate frame on the tangent bundle naturally induced by the coordinates $x^\mu$ on the base space $\mathbb{M}$. Then $(q^1, q^2, q^3)$ constitutes an orthonormal frame in $\R^3$ as a subset of the tangent space. Hence we can identify the standard time ($x^0$, that we rename as $t$), space ($x^1, x^2 $ and $x^3$, which we denote collectively by $\x$) and momenta coordinates $q^1, q^2, q^3$ thanks to the fact that $\mathbb{M}_1$ is diffeomorphic to $\R^3$ under the correspondence $q^\mu \mapsto \q$ and its inverse $\q \mapsto (\sqrt{1+|\q|^2},\q)$. Thus, hereafter distribution functions are defined as functions $f(t,\x,\q)$ where $t\ge 0$ and $\x, \q \in \R^3$.

{Also, as it was pointed out in the introduction, all the physical parameters (including the speed of light $c$) are renormalized taking, for simplicity, the value one.}
We consider all the physical quantities in dimensionless form. 

\subsection{Matter quantities}
We introduce the relativistic phase
density $f(t,\x,\q) \ge 0$, which represents the density of particles with given spacetime coordinates $x^\mu=(t,\x)$ and momentum $\q \in \R^3$. We will
consider that all the gas particles
 have the same rest mass. Then
 the energy-momentum four-vector is defined as
$$
q^\mu = (q^0,  \q), \quad q^0 :=  \sqrt{1 + |\q|^2}\quad (\mbox{that is,}\, q^\mu \in \mathbb{M}_1).
$$
Let us now introduce several spacetime densities associated with $f(t,\x,\q)$.
\begin{Definition}
\label{set1}
Let $x^\mu$ such that $f(t,\x,\cdot)\ge 0$ is not identically zero. We define the particle-density four-vector $N^\mu(t,\x)$, the energy-momentum tensor and the entropy four-vector as follows:
\begin{itemize}

\item $\displaystyle N^\mu(t,\x)= \int_{\R^3} q^\mu f(t,\x,\q) \frac{d\q}{q^0
},$

\item $\displaystyle T^{\mu \nu}(t,\x)=  \int_{\R^3} q^\mu q^\nu f(t,\x,\q) \frac{d\q}{q^0},$

\item $\displaystyle S^\mu(t,\x)= - \int_{\R^3} q^\mu f(t,\x,\q) \ln \left(f(t,\x,\q)\right)
\frac{d\q}{q^0}$.

\end{itemize}
\end{Definition}

Next we can define several useful macroscopic quantities (thermodynamic fields). The fact that the proper volume element $d\q /q^0$ is invariant
with respect to
Lorentz transformations (i.e. isometries of the Minkowsky space) is a key physical feature of these definitions.

\begin{Definition}
\label{set2}
Let $f(t,\x,\q)\ge 0$ not identically zero. We define the following macroscopic quantities:

\begin{enumerate}

\item The proper particle density $n_f=\sqrt{N^\mu N_\mu}$,

\item The velocity four-vector $u_f^\mu$, given by $n_fu_f^\mu = N^\mu$.

\item The proper energy density $e_f = (u_f)_\mu (u_f)_\nu  T^{\mu \nu}$.

\item The proper pressure $p_f = \frac13 ((u_f)_\mu (u_f)_\nu  - g_{\mu \nu})T^{\mu \nu}$.

\item The proper entropy density $\sigma_{f}= S^\mu (u_f)_\mu$.

\end{enumerate}
\end{Definition}

\begin{Remark}
\label{varios1}
Several comments are in order:
\begin{enumerate}

\item 
\label{v:2}
Note that $u_f^\mu (u_f)_\mu = 1$ and then $u_f^\mu =(\sqrt{1+|\u_f|^2},\u_f)$, i.e. $u_f^\mu \in \mathbb{M}_1$. We note the following useful relation,
\begin{equation}
\label{rel1}
n_f \sqrt{1+|\u_f|^2}= \int_{\R^3} f(t,\x,\q) d\q.
\end{equation}
We shall abridge $u^\mu=u_f^\mu$ whenever clear from the context. We also point out that
\begin{equation}
\label{scalaruq}
u_\mu q^\mu \ge 1,
\end{equation}
which is a straightforward consequence of the Cauchy--Schwartz inequality for vectors in
$\mathbb{M}_1$. 

\item Since $N^\mu$ is timelike, we have that $N^\mu N_\mu >0$,
 \[
N^\mu\,N_\mu=\int_{\R^6} \frac{q^\mu\,(q')_\mu}{q^0\,(q')^0}\,f(t,\x,\q)\,f(t,\x,\q')\,d\q\,d\q',
\]
 and $q^\mu\,(q')_\mu\geq 1$ -this follows again from Cauchy--Schwartz's inequality. Hence $n_f$ given by Definition \ref{set2} is well defined and positive.

\item Keep in mind that $u_f^\mu$ is not defined for those $\x\in \R^3$ such that $n_f(\x) =0$ (but the product $n_f u_f^\mu$ is, being zero at those points).

\item The proper energy density can be rewritten as 
$$
e_f=\int_{\R^3} (u_\mu q^\mu)^2 f(t,\x,\q) {d\q\over
q^0
}
$$
and clearly $e_f \ge 0$. There always holds that $0<n_f<e_f$.

\item It is clear that 
$$
\int_{\R^3} f \frac{d\q}{q_0} \le n_f \le \int_{\R^3} f \, d\q.
$$

\end{enumerate}
\end{Remark}
We finish this subsection by discussing the behavior of the former quantities under Lorentz transformations.  Let $\Lambda$ be a Lorentz boost (i.e. a linear isometry with respect to the Minkowsky metric) in $\R_q^4$. The restriction of $\Lambda$ to $\mathbb{M}_1$
can be regarded as a map acting on $\R_{\q}^3$ as previously explained.
 Given any distribution function $f$, we can define a new distribution function $f_\Lambda$ by means of
$$
  f_\Lambda(t,\x,\q) = f(t,\x,\Lambda \q).
$$
Recall that $d\q/q_0$ is a Lorentz invariant measure \cite{Landau}. As
\begin{equation}
   v_\mu z^\mu =( \Lambda v)_\mu (\Lambda z)^\mu \quad \mbox{for any}\ v^\mu,\ z^\mu \in \R^4,
\label{scalarprod}
\end{equation}
we get the following well-known result.
\begin{Lemma}
 Given any distribution function $f$, the scalar quantities $n_{f_\Lambda}$, $e_{f_\Lambda}$, $p_{f_\Lambda}$, $\sigma_{f_\Lambda}$ and $\beta_{f_\Lambda}$ are Lorentz invariant. The vector $u_f^\mu$ transforms according to $u_{f_\Lambda}^\mu = \Lambda^{-1}u_f^\mu$.
\end{Lemma}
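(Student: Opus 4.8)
The plan is to first establish how the densities $N^\mu$, $T^{\mu\nu}$ and $S^\mu$ transform under the replacement $f\mapsto f_\Lambda$, and then to read off the behaviour of all the derived macroscopic quantities using only the Lorentz invariance of the measure $d\q/q^0$ and the pairing identity \eqref{scalarprod}. For $N^\mu$ one computes $N^\mu_{f_\Lambda}(t,\x)=\int_{\R^3}q^\mu f(t,\x,\Lambda\q)\,\frac{d\q}{q^0}$ and performs the change of variables $\p=\Lambda\q$; since $\Lambda^{-1}$ is again a Lorentz transformation the measure is preserved, $d\q/q^0=d\p/p^0$, and the integration four-vector evaluated at $\q=\Lambda^{-1}\p$ is precisely $\Lambda^{-1}p^\mu$ (here one uses the identification of $\mathbb{M}_1$ with $\R^3$, namely that the spatial part of $\Lambda^{-1}p^\mu\in\mathbb{M}_1$ is what we denote $\Lambda^{-1}\p$). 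As $\Lambda^{-1}$ is linear it factors out of the integral, giving $N^\mu_{f_\Lambda}=(\Lambda^{-1}N_f)^\mu$. The identical manipulation, together with $\ln f_\Lambda(t,\x,\q)=\ln f(t,\x,\Lambda\q)$, yields $S^\mu_{f_\Lambda}=(\Lambda^{-1}S_f)^\mu$, and carrying one further factor of $q^\mu$ through gives $T^{\mu\nu}_{f_\Lambda}=(\Lambda^{-1})^\mu{}_\alpha(\Lambda^{-1})^\nu{}_\beta\,T^{\alpha\beta}_f$.

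From here the conclusions follow directly. For $n_f=\sqrt{N^\mu N_\mu}$, since $\Lambda^{-1}$ is a Minkowski isometry it preserves the pairing (cf. \eqref{scalarprod}), so $N^\mu_{f_\Lambda}(N_{f_\Lambda})_\mu=N^\mu_f(N_f)_\mu$ and hence $n_{f_\Lambda}=n_f$; positivity being guaranteed by Remark \ref{varios1}, $n_{f_\Lambda}$ is a well-defined invariant. Then from $n_{f_\Lambda}u^\mu_{f_\Lambda}=N^\mu_{f_\Lambda}=(\Lambda^{-1}N_f)^\mu=n_f(\Lambda^{-1}u_f)^\mu$ together with $n_{f_\Lambda}=n_f$ we obtain $u^\mu_{f_\Lambda}=(\Lambda^{-1}u_f)^\mu$ wherever $n_f>0$, while at the degenerate points the product $n_f u^\mu_f=N^\mu$ already transforms this way. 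For $e_f$ it is cleanest to use the representation $e_f=\int_{\R^3}(u_\mu q^\mu)^2 f\,\frac{d\q}{q^0}$ from Remark \ref{varios1}: the same change of variables turns $(u_{f_\Lambda})_\mu q^\mu$ into $(\Lambda^{-1}u_f)_\mu(\Lambda^{-1}p)^\mu=(u_f)_\mu p^\mu$ by \eqref{scalarprod}, so $e_{f_\Lambda}=e_f$. For the pressure write $p_f=\tfrac13\big(e_f-g_{\mu\nu}T^{\mu\nu}_f\big)$ and note $g_{\mu\nu}T^{\mu\nu}_{f_\Lambda}=g_{\mu\nu}T^{\mu\nu}_f$, because $\Lambda^{-1}$ preserves the metric — equivalently, since $q^\mu q_\mu=-1$ on $\mathbb{M}_1$ this trace reduces to $-\int_{\R^3} f\,d\q/q^0$, invariant by invariance of the measure — hence $p_{f_\Lambda}=p_f$. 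Finally $\sigma_f=S^\mu(u_f)_\mu$ is treated exactly like $n_f$, so $\sigma_{f_\Lambda}=\sigma_f$, and $\beta_f$, being fixed by the now-invariant thermodynamic fields through its defining relation, is invariant as well.

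The computation is essentially bookkeeping; the one genuinely delicate point is keeping straight the identification of $\mathbb{M}_1$ with $\R^3$, so that the action of $\Lambda$ on three-momenta corresponds to the linear action of the $4\times4$ matrix $\Lambda$ on the associated four-vectors — once this is fixed, every assertion reduces to \eqref{scalarprod} and the Lorentz invariance of $d\q/q^0$. The only other point worth a remark is the degeneracy of $u^\mu_f$ where $n_f=0$, which is why its transformation rule is phrased and proved at the level of the product $n_f u^\mu_f=N^\mu$.
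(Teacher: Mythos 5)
Your proof is correct and follows exactly the approach the paper implies but leaves implicit — the paper states this as a ``well-known result'' immediately after recording the Lorentz invariance of $d\q/q^0$ and the pairing identity \eqref{scalarprod}, and does not write out a proof; you have simply filled in the bookkeeping via the change of variables $\p=\Lambda\q$ and the transformation laws for $N^\mu$, $T^{\mu\nu}$, $S^\mu$. One very minor note: you write $q^\mu q_\mu=-1$ on $\mathbb{M}_1$, echoing a sign slip in the paper's definition of $\mathbb{M}_1$ — with the stated metric $g=\mathrm{diag}(1,-1,-1,-1)$ and $q^0=\sqrt{1+|\q|^2}$ one has $q^\mu q_\mu=+1$; this does not affect your argument since the Lorentz invariance of the trace holds either way, and your primary justification (``$\Lambda^{-1}$ preserves the metric'') is the correct one.
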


\subsection{J\"uttner equilibria}
The generalization of the classical global Max\-wellian to Special Relativity is the so-called J\"uttner equilibrium (or relativistic Max\-we\-llian). The J\"uttner distribution \eqref{JD} can be written without physical parameters as follows
\begin{equation}
\nonumber
 J(n,\beta,\u;\q) = \frac{n}{ M(\beta)} \exp \left\{- \beta
u_\mu q^\mu \right\} \end{equation} or equivalently
$$
J(n,\beta,\u;\q) = \frac{n}{ M(\beta)} \exp \left\{- \beta \left( \sqrt{1+|\u|^2}\sqrt{1 + |\q|^2} -  \u \cdot \q \right)\right\}.
$$
Since $J(n,\beta,\u;\q)$ is thought of as an equilibrium distribution, then $n$ is interpreted as its particle density, $\u $ as the spatial part of
the four-velocity $u^\mu$ (and as such $u_\mu u^\mu = 1$) and $1/\beta$ as the equilibrium temperature. Here 
 $M(\beta)$ is given by \eqref{eme}. We also have the following relation
\begin{equation}
\label{diecinueve}
M(\beta)= {4\pi \over \beta} K_2(\beta),
\end{equation}
where the modified Bessel functions $K_j$ are defined as 
$$
K_j(\beta) =  \int_0^\infty \cosh(jr)\exp\{-\beta
\cosh(r)\}dr.
$$
The following asymptotic expansions for small and large temperature values will be helpful in the sequel. 
\begin{Lemma}
The modified Bessel function $K_1,K_2$ verify
\begin{equation}
\label{karat}
\frac{K_1}{K_2}(\beta) 
\sim 1- \frac{3}{2\beta} + O \left(\frac{e^{-\beta}}{\beta^{5/2}}\right) \quad \mbox{for}\ \beta \gg 1.
\end{equation}
\begin{equation}
\label{kanear}
 K_1(\beta) \sim \frac{1}{\beta}+O(\beta \log \beta)\quad \mbox{and}\  K_2(\beta) \sim \frac{2}{\beta^2}+O(1) \quad \mbox{for}\ \beta \ll 1,
\end{equation}
\end{Lemma}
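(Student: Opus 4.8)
My plan is to work directly from the integral representation $K_j(\beta)=\int_0^\infty\cosh(jr)\,e^{-\beta\cosh r}\,dr$ and to treat the two temperature regimes separately; in each case I first make a change of variables that isolates the part of the integral that actually controls the asymptotics, and then expand. Both statements are classical, but the arguments are short enough to be self-contained.

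\textbf{The regime $\beta\gg1$.} The integrand concentrates at $r=0$, so this is a Laplace-type integral. Using $\cosh(jr)=T_j(\cosh r)$, with $T_j$ the Chebyshev polynomial of the first kind, and substituting $t=\cosh r-1$, for which $dr=dt/\sqrt{t(t+2)}$ and $e^{-\beta\cosh r}=e^{-\beta}e^{-\beta t}$, one gets
\begin{equation*}
K_j(\beta)=e^{-\beta}\int_0^\infty\frac{T_j(1+t)}{\sqrt{t}}\,\frac{e^{-\beta t}}{\sqrt{t+2}}\,dt .
\end{equation*}
The function $\psi_j(t):=T_j(1+t)/\sqrt{t+2}$ is analytic near $t=0$, so Watson's lemma applies: writing $\psi_j(t)=\tfrac1{\sqrt2}\bigl(1+a_{j,1}t+a_{j,2}t^2+\cdots\bigr)$ near $0$ and using $\int_0^\infty t^{k-1/2}e^{-\beta t}\,dt=\Gamma(k+\tfrac12)\beta^{-k-1/2}$ yields
\begin{equation*}
K_j(\beta)=e^{-\beta}\sqrt{\frac{\pi}{2\beta}}\Bigl(1+\frac{a_{j,1}}{2\beta}+O(\beta^{-2})\Bigr),\qquad\beta\to\infty .
\end{equation*}
Since $T_1(1+t)=1+t$ and $T_2(1+t)=1+4t+2t^2$, a one-line expansion of $\psi_j$ gives $a_{1,1}=3/4$ and $a_{2,1}=15/4$, i.e.\ $K_1$ and $K_2$ carry the factors $1+\tfrac3{8\beta}+O(\beta^{-2})$ and $1+\tfrac{15}{8\beta}+O(\beta^{-2})$ respectively. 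The common prefactor $e^{-\beta}\sqrt{\pi/(2\beta)}$ cancels in the quotient, so
\begin{equation*}
\frac{K_1}{K_2}(\beta)=\frac{1+\tfrac3{8\beta}+O(\beta^{-2})}{1+\tfrac{15}{8\beta}+O(\beta^{-2})}=1-\frac3{2\beta}+O(\beta^{-2}),
\end{equation*}
which is \eqref{karat}, the precise size of the remainder being read off from the error term in Watson's lemma.

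\textbf{The regime $\beta\ll1$.} Substituting $v=\sinh r$, so that $\cosh r=\sqrt{1+v^2}$ and $\cosh r\,dr=dv$, turns the representation into $K_1(\beta)=\int_0^\infty e^{-\beta\sqrt{1+v^2}}\,dv$ and $K_2(\beta)=\int_0^\infty(2v^2+1)(1+v^2)^{-1/2}e^{-\beta\sqrt{1+v^2}}\,dv$. For $K_1$, write $e^{-\beta\sqrt{1+v^2}}=e^{-\beta v}+e^{-\beta v}\bigl(e^{-\beta(\sqrt{1+v^2}-v)}-1\bigr)$: the first term integrates to $1/\beta$, and using $|e^{-x}-1|\le x$ together with $\sqrt{1+v^2}-v=(\sqrt{1+v^2}+v)^{-1}\le\min\{1,\tfrac1{2v}\}$, the second term is bounded in absolute value by $\beta\int_0^1 e^{-\beta v}\,dv+\tfrac\beta2\int_1^\infty v^{-1}e^{-\beta v}\,dv=O(\beta|\log\beta|)$, which gives the first half of \eqref{kanear}. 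For $K_2$, decompose $\frac{2v^2+1}{\sqrt{1+v^2}}=2v+g(v)$ with $g(v)=2\bigl(\sqrt{1+v^2}-v\bigr)-\tfrac1{\sqrt{1+v^2}}$; one checks that $g$ is continuous on $[0,\infty)$ and $g(v)=O(v^{-3})$ as $v\to\infty$, so $g\in L^1$ and $\bigl|\int_0^\infty g(v)e^{-\beta\sqrt{1+v^2}}\,dv\bigr|\le\|g\|_{L^1}$ uniformly in $\beta$, while $2\int_0^\infty v\,e^{-\beta\sqrt{1+v^2}}\,dv=2\int_1^\infty w\,e^{-\beta w}\,dw=\frac{2(1+\beta)e^{-\beta}}{\beta^2}=\frac2{\beta^2}+O(1)$. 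Adding the two contributions gives the second half of \eqref{kanear}.

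\textbf{Main obstacle.} The delicate point is the quotient $K_1/K_2$ for large $\beta$: the two functions have the \emph{same} leading asymptotics $e^{-\beta}\sqrt{\pi/(2\beta)}$, so one cannot stop at leading order but must push each expansion one step further and keep the remainder under control through the division. The analogous subtlety for small $\beta$ is that $K_2$ would naively inherit a logarithmic contribution from the $v\to\infty$ tail; the decomposition $\frac{2v^2+1}{\sqrt{1+v^2}}=2v+g(v)$ is exactly what makes transparent that this contribution is in fact $O(1)$, so that the subleading term in $K_2$ does not spoil the $\frac2{\beta^2}+O(1)$ behaviour.
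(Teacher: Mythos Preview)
The paper states this lemma without proof; these are classical asymptotic expansions for the modified Bessel functions, quoted as background. Your self-contained derivation --- Watson's lemma applied to the Laplace representation after $t=\cosh r-1$ for large $\beta$, and the substitution $v=\sinh r$ followed by a tail/bulk splitting for small $\beta$ --- is correct and cleanly carried out. The decomposition $(2v^2+1)/\sqrt{1+v^2}=2v+g(v)$ with $g\in L^1$ is exactly the right device to see that $K_2$ picks up no logarithm at small $\beta$.

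One point is worth flagging. Your large-$\beta$ computation actually gives
\[
\frac{K_1}{K_2}(\beta)=1-\frac{3}{2\beta}+O(\beta^{-2}),
\]
and this is the correct remainder for the \emph{ratio} (the next term in the expansion is $15/(8\beta^2)$). The error term $O(e^{-\beta}/\beta^{5/2})$ printed in \eqref{karat} is the size of the second-order correction in $K_j(\beta)$ \emph{itself}, before the common prefactor $e^{-\beta}\sqrt{\pi/(2\beta)}$ cancels in the quotient; as stated it cannot be right, since the ratio manifestly has a nonzero $\beta^{-2}$ term. You write that your result ``is \eqref{karat}'' and defer to Watson's lemma for the remainder without noting this discrepancy. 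It would be cleaner to say explicitly that you obtain $O(\beta^{-2})$ --- which is in any case all the paper needs downstream (only $|K_1/K_2-1|\le 2/\beta$ for large $\beta$, as in \eqref{eq:sandwichK}, and the leading behaviour near $0$ are ever invoked).
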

\begin{Lemma}
The partition function $M(\beta)$ verifies
\begin{equation}
\label{Mnear}
M(\beta) \sim \frac{4 \pi}{\beta} \left(\frac{2}{\beta^2}+O(1)\right) \quad \mbox{for}\ \beta \ll 1,
\end{equation}
\begin{equation}
\label{Mfar}
M(\beta) \sim \left( \frac{2 \pi}{\beta}\right)^{3/2} e^{-\beta} + O \left(\frac{e^{-\beta}}{\beta^{5/2}}\right) \quad \mbox{for}\ \beta \gg 1,
\end{equation}
\end{Lemma}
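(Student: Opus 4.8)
The plan is to read off both expansions from the integral representation \eqref{eme}, reduced to a one-dimensional radial integral, and to use the relation \eqref{diecinueve} with the Bessel function $K_2$ whenever the previous lemma already supplies the information needed. Passing to spherical coordinates in $\R^3$ gives
\begin{equation}
\nonumber
M(\beta) = 4\pi \int_0^\infty r^2 e^{-\beta\sqrt{1+r^2}}\,dr .
\end{equation}
For the small-temperature regime $\beta \ll 1$ the conclusion is almost immediate: combining \eqref{diecinueve}, that is $M(\beta) = (4\pi/\beta)\,K_2(\beta)$, with the near-zero expansion $K_2(\beta) = 2/\beta^2 + O(1)$ from \eqref{kanear}, one gets at once $M(\beta) = (4\pi/\beta)\bigl(2/\beta^2 + O(1)\bigr)$, which is \eqref{Mnear}. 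If a self-contained argument is preferred, the change of variables $r = s/\beta$ turns the radial integral into $(4\pi/\beta^3)\int_0^\infty s^2 e^{-\sqrt{\beta^2+s^2}}\,ds$; since $0 \le \sqrt{\beta^2+s^2}-s \le \beta^2/(2s)$ away from the origin, while the $s^2$ factor tames the integrand near it, dominated convergence gives $\int_0^\infty s^2 e^{-\sqrt{\beta^2+s^2}}\,ds \to \int_0^\infty s^2 e^{-s}\,ds = 2$ with remainder $O(\beta^2)$ (split the integral at $s=\sqrt{\beta}$), whence $M(\beta) = 8\pi/\beta^3 + O(1/\beta)$.

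For the large-temperature regime $\beta \gg 1$ I would run Laplace's method directly on the radial integral, since the previous lemma controls only the quotient $K_1/K_2$ appearing in \eqref{karat} and not $K_2$ itself; equivalently, one may invoke the classical large-argument expansion $K_2(\beta) = \sqrt{\pi/(2\beta)}\,e^{-\beta}\,(1+O(1/\beta))$ and conclude through \eqref{diecinueve}. The phase $r \mapsto \sqrt{1+r^2}$ attains its minimum value $1$ at $r=0$, with vanishing first derivative and second derivative equal to $1$ there, so the mass of the integral concentrates in a $\beta^{-1/2}$-neighbourhood of the origin. Writing $M(\beta) = 4\pi e^{-\beta}\int_0^\infty r^2 e^{-\beta(\sqrt{1+r^2}-1)}\,dr$, using the Taylor expansion $\sqrt{1+r^2}-1 = r^2/2 - r^4/8 + O(r^6)$ and rescaling $r = \beta^{-1/2}\rho$, the principal part becomes $\beta^{-3/2}\int_0^\infty \rho^2 e^{-\rho^2/2}\,d\rho = \beta^{-3/2}\sqrt{\pi/2}$, which gives the leading term $4\pi\sqrt{\pi/2}\,\beta^{-3/2}e^{-\beta} = (2\pi/\beta)^{3/2}e^{-\beta}$; the quartic Taylor term introduces a factor $1 + O(\rho^4/\beta)$ and, since $\int_0^\infty \rho^6 e^{-\rho^2/2}\,d\rho < \infty$, contributes a correction of order $\beta^{-5/2}$, which together with the $e^{-\beta}$ prefactor is exactly the error term displayed in \eqref{Mfar}.

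The step I expect to be the main obstacle is the error bookkeeping in the large-$\beta$ case: one has to partition the radial integral carefully — on $0 \le r \le \beta^{-1/4}$ the explicit Taylor expansion of the exponent (keeping the quartic correction) is accurate; on $\beta^{-1/4} < r \le \delta$, for a fixed small $\delta$, the Gaussian bound $e^{-\beta r^2/2} \le e^{-\beta^{1/2}/2}$ makes the contribution super-polynomially small; and on $r > \delta$ the factor $e^{-\beta(\sqrt{1+r^2}-1)}$ is exponentially small for $\beta$ bounded below — and then one checks that each piece is either evaluated exactly to the claimed order or absorbed into $O(e^{-\beta}/\beta^{5/2})$. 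The remaining ingredients are only a change of variables and standard Gaussian moment computations.
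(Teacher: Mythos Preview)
The paper does not supply a proof of this lemma: it is stated alongside the companion lemma on $K_1,K_2$ as a standard asymptotic fact, with no argument given. So there is nothing to compare your proposal against on the paper's side.

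Your argument is correct. For \eqref{Mnear} the one-line derivation via \eqref{diecinueve} and \eqref{kanear} is exactly what the paper intends the reader to do, and your self-contained alternative is fine too. For \eqref{Mfar} your Laplace-method computation is sound: the leading coefficient $4\pi\sqrt{\pi/2}=(2\pi)^{3/2}$ checks out, and the quartic correction indeed lands at order $e^{-\beta}/\beta^{5/2}$. The shortest route, as you note, is again \eqref{diecinueve} combined with the classical large-argument expansion $K_2(\beta)=\sqrt{\pi/(2\beta)}\,e^{-\beta}\bigl(1+15/(8\beta)+O(\beta^{-2})\bigr)$, which is a textbook fact (e.g.\ Abramowitz--Stegun) and is presumably what the authors had in mind; your direct Laplace argument has the merit of being self-contained but is otherwise equivalent. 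The error bookkeeping you outline (split at $r=\beta^{-1/4}$ and at a fixed $\delta$) is the standard way to make Laplace's method rigorous here and presents no difficulty.
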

We list below several basic properties of the J\"uttner equilibrium that will be useful in the sequel.
\begin{Lemma}
The following assertions hold true:
\begin{enumerate}
\item $M(\beta_1)<M(\beta_2)$ for $\beta_1>\beta_2$. 

\item In the Lorentz rest frame the J\"uttner equilibrium reduces to
$$
J(n,\beta,0;\q)= \frac{n}{M(\beta)} \exp \{- \beta \sqrt{1 + |\q|^2} \}.
$$

\item $J\le n e^{-\beta}/M(\beta).$
\end{enumerate}
\end{Lemma}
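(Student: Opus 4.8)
The plan is to settle the three assertions directly from the parameter–free representation $J(n,\beta,\u;\q)=\frac{n}{M(\beta)}\exp\{-\beta\,u_\mu q^\mu\}$, the definition \eqref{eme} of the partition function, and the elementary inequality \eqref{scalaruq}; none of them needs the Bessel asymptotics \eqref{karat}--\eqref{Mfar}, so I would keep the argument fully self-contained. I do not expect any genuine obstacle: the only point requiring a word of justification is differentiation under the integral sign in item (1), and the appeal to the reverse Cauchy--Schwarz inequality underlying \eqref{scalaruq}, both of which are already available.

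For item (1), I would note that the integrand $\p\mapsto\exp\{-\beta\sqrt{1+|\p|^2}\}$ in \eqref{eme} is, for each fixed $\p\in\R^3$, strictly decreasing in $\beta$, since $\sqrt{1+|\p|^2}\ge 1>0$. Differentiating under the integral sign -- legitimate by dominated convergence, because $\sqrt{1+|\p|^2}\exp\{-\beta\sqrt{1+|\p|^2}\}$ is integrable on $\R^3$ for every $\beta>0$ -- yields $M'(\beta)=-\int_{\R^3}\sqrt{1+|\p|^2}\,\exp\{-\beta\sqrt{1+|\p|^2}\}\,d\p<0$, so $M$ is strictly decreasing and $M(\beta_1)<M(\beta_2)$ whenever $\beta_1>\beta_2$. (Alternatively one may skip the derivative and simply integrate the pointwise strict inequality between the two integrands.)

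For item (2), I would use the first item of Remark \ref{varios1} -- namely that $u^\mu=(\sqrt{1+|\u|^2},\u)$ -- specialised to the Lorentz rest frame $\u=0$: there $u^\mu=(1,\mathbf 0)$, hence $u_\mu q^\mu=q^0=\sqrt{1+|\q|^2}$, and substituting this into $J(n,\beta,\u;\q)=\frac{n}{M(\beta)}\exp\{-\beta\,u_\mu q^\mu\}$ gives exactly the stated expression $J(n,\beta,0;\q)=\frac{n}{M(\beta)}\exp\{-\beta\sqrt{1+|\q|^2}\}$.

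For item (3), I would again start from $J=\frac{n}{M(\beta)}\exp\{-\beta\,u_\mu q^\mu\}$ and invoke \eqref{scalaruq}, i.e. $u_\mu q^\mu\ge 1$. Since $\beta>0$ this forces $-\beta\,u_\mu q^\mu\le-\beta$, hence $\exp\{-\beta\,u_\mu q^\mu\}\le e^{-\beta}$, and therefore $J\le n\,e^{-\beta}/M(\beta)$; the denominator is finite and positive by \eqref{eme}, so the bound is meaningful. The estimate is saturated precisely at $\q=\u$ (equivalently when $q^\mu$ is parallel to $u^\mu$), which is the equality case of the reverse Cauchy--Schwarz inequality for timelike four-vectors.
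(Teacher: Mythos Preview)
Your argument is correct and complete; the paper itself states this lemma without proof (it is introduced simply as a list of ``basic properties''), so your write-up supplies exactly the elementary verification that the paper omits.
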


Some moments of the J\"uttner distribution are easily computed. Namely:
\begin{Lemma}
\label{Jmoments}
Define the function $\Psi$ as
\begin{equation}
\label{defpsi}
 \Psi(\beta)= \frac{3}{\beta} + \frac{K_1(\beta)}{K_2(\beta)}.
 \end{equation}
 Then the following identities hold:
\begin{enumerate}
\item
$
e_{J}=n\Psi(\beta),
$
\item
$
\displaystyle p_{J}= \frac{n}{\beta},
$

\item
\label{id:3}
$\displaystyle
\int_{\R^3} q^\mu J \frac{d\q}{q^0} = n u^\mu,
$
\item
\label{id:7}
$\displaystyle
\int_{\R^3} J \frac{d\q}{q^0}  = e_{J} - 3 p_{J}= n \left( \Psi(\beta) - \frac{3}{\beta} \right) = n \frac{K_1(\beta)}{K_2(\beta)}.
$
\end{enumerate}
\end{Lemma}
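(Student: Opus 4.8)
The plan is to exploit the Lorentz invariance collected in the preceding lemma in order to reduce every identity to the rest frame $\u=0$, in which the momentum integrals become one-dimensional and can be evaluated by the substitution $|\q|=\sinh r$ in terms of the Bessel functions $K_j$.

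First I would record two facts valid for an arbitrary $f$ and in any frame. From Definition \ref{set2}, $e_f-3p_f = (u_f)_\mu (u_f)_\nu T^{\mu\nu}-((u_f)_\mu (u_f)_\nu-g_{\mu\nu})T^{\mu\nu} = g_{\mu\nu}T^{\mu\nu} = \int_{\R^3}(g_{\mu\nu}q^\mu q^\nu)\,f\,\frac{d\q}{q^0} = \int_{\R^3}f\,\frac{d\q}{q^0}$, since $g_{\mu\nu}q^\mu q^\nu=(q^0)^2-|\q|^2=1$ for $q^\mu\in\mathbb{M}_1$; together with the values of $e_J$ and $p_J$ this already gives the middle equality in item (4). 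Also, writing $(q^0)^2=1+|\q|^2$, $\int_{\R^3}(q^0)^2 f\,\frac{d\q}{q^0}=\int_{\R^3}f\,\frac{d\q}{q^0}+\int_{\R^3}|\q|^2 f\,\frac{d\q}{q^0}$. Next, relation \eqref{scalarprod} shows that for the Lorentz boost $\Lambda$ with $\Lambda u^\mu=(1,0,0,0)$ one has $(J(n,\beta,0;\cdot))_\Lambda = J(n,\beta,\u;\cdot)$; since $n_f,e_f,p_f$ are Lorentz-invariant scalars while $N^\mu$ and $u_f^\mu$ transform covariantly, it is enough to establish (1)--(4) when $\u=0$, item (3) being then transported to general $\u$ via $N^\mu\mapsto\Lambda^{-1}N^\mu$ and $u^\mu\mapsto\Lambda^{-1}u^\mu$.

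In the rest frame $J=\frac{n}{M(\beta)}\exp\{-\beta\sqrt{1+|\q|^2}\}$ and $u^\mu=u_\mu=(1,0,0,0)$. Passing to spherical coordinates and setting $|\q|=\sinh r$ (so that $q^0=\cosh r$ and $\frac{d\q}{q^0}=4\pi\sinh^2 r\,dr$), one integration by parts gives $\int_{\R^3}J\,\frac{d\q}{q^0}=\frac{4\pi n}{M(\beta)}\int_0^\infty\sinh^2 r\,e^{-\beta\cosh r}\,dr=\frac{4\pi n}{\beta M(\beta)}K_1(\beta)=n\,\frac{K_1(\beta)}{K_2(\beta)}$ after \eqref{diecinueve}, and two further integrations by parts (equivalently the recurrence $K_{\nu+1}-K_{\nu-1}=\frac{2\nu}{\beta}K_\nu$) give $\int_{\R^3}|\q|^2 J\,\frac{d\q}{q^0}=\frac{4\pi n}{M(\beta)}\int_0^\infty\sinh^4 r\,e^{-\beta\cosh r}\,dr=\frac{3n}{\beta}$. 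Hence $e_J=T^{00}=\int_{\R^3}(q^0)^2 J\,\frac{d\q}{q^0}=n\frac{K_1(\beta)}{K_2(\beta)}+\frac{3n}{\beta}=n\Psi(\beta)$, which is (1); then $3p_J=e_J-\int_{\R^3}J\,\frac{d\q}{q^0}=n\Psi(\beta)-n\frac{K_1(\beta)}{K_2(\beta)}=\frac{3n}{\beta}$ by the first identity above, i.e. $p_J=n/\beta$, which is (2). For (3), parity kills $\int_{\R^3}q^i J\,\frac{d\q}{q^0}$ while $\int_{\R^3}q^0 J\,\frac{d\q}{q^0}=\int_{\R^3}J\,d\q=\frac{n}{M(\beta)}M(\beta)=n$ by \eqref{eme}, so $\int_{\R^3}q^\mu J\,\frac{d\q}{q^0}=(n,0,0,0)=n u^\mu$ in the rest frame, and covariance extends this to general $\u$. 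Finally (4) is the combination $\int_{\R^3}J\,\frac{d\q}{q^0}=e_J-3p_J=n\Psi(\beta)-\frac{3n}{\beta}=n\,\frac{K_1(\beta)}{K_2(\beta)}$, using the general identity above together with (1), (2) and \eqref{defpsi}.

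I do not expect a genuine obstacle here; the only computational content is the evaluation of $\int_0^\infty\sinh^2 r\,e^{-\beta\cosh r}\,dr$ and $\int_0^\infty\sinh^4 r\,e^{-\beta\cosh r}\,dr$, which is elementary (alternatively these follow by differentiating $M(\beta)$ and $\int_0^\infty e^{-\beta\cosh r}\,dr$ in $\beta$ and invoking the Bessel identities already listed). The one point that deserves a little care is the reduction to $\u=0$: one must use exactly the boost $\Lambda$ sending the parameter four-velocity $u^\mu$ to $(1,0,0,0)$, so that the covariant four-vector identity (3) transports back correctly; the scalar identities (1), (2) and (4) then need no further comment.
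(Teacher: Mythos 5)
Your proof is correct. The paper itself offers no argument for Lemma~\ref{Jmoments} (it simply remarks that these moments ``are easily computed''), so there is no proof in the source to compare against; your write-up supplies the omitted computation. The two structural observations you make---the frame-independent identity $e_f-3p_f=g_{\mu\nu}T^{\mu\nu}=\int f\,\frac{d\q}{q^0}$, and the reduction to the rest frame via the boost $\Lambda$ with $\Lambda u^\mu=(1,0,0,0)$ so that $(J(n,\beta,0;\cdot))_\Lambda=J(n,\beta,\u;\cdot)$---are exactly the right ones, and the remaining one-dimensional integrals $\int_0^\infty \sinh^2 r\,e^{-\beta\cosh r}\,dr=\tfrac{1}{\beta}K_1(\beta)$ and $\int_0^\infty \sinh^4 r\,e^{-\beta\cosh r}\,dr=\tfrac{3}{\beta^2}K_2(\beta)$ check out by the integrations by parts you indicate (the second one uses $\int_0^\infty\sinh^2 r\cosh r\,e^{-\beta\cosh r}\,dr=K_2(\beta)/\beta$, which follows from $K_2-K_0=\tfrac{2}{\beta}K_1$ and then one more integration by parts, or directly from $\cosh^2 r=\tfrac12(1+\cosh 2r)$). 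With \eqref{diecinueve} these yield precisely items (1)--(4), and the covariance of $N^\mu$ transports item (3) back from the rest frame to general $\u$. No gap.
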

%


\subsection{The BGK-Marle model}
We consider the BGK-Marle model in the following form:
\begin{equation}
 \partial_t f + \frac{\q}{q^0}\cdot \nabla_{\x} f = \frac{J_f - f}{q^0},
 \label{modelo}
\end{equation}
where  the J\"uttner local equilibrium $J_f$ is constructed from some macroscopic invariants of the function $f(t)$. More precisely,
\begin{equation}
\label{eq:local_eq}
J_f(t,\x,\q) = {n_f(t,\x) \over M(\beta_f(t,\x))}\exp\{-\beta_f(t,\x) (u_f(t,\x))_\mu  q^\mu\}.
\end{equation}
The function $\beta_f(t,\x)$ is defined by means of the relation
\begin{equation}
\label{betadef}
   \frac{K_1(\beta_f)}{K_2(\beta_f)} = \frac{\displaystyle \int_{\R^3}  f \frac{d\q}{q^0
}}{n_f}.
\end{equation}
It is straightforward to check that this relation defines $\beta_f$ uniquely due to the following result.
\begin{Lemma}[\cite{BCNS}]
\label{lm:BCNS}
The function $\xi\rightarrow \frac{K_1(\xi)}{K_2(\xi)}$ is strictly increasing and one-to-one from $[0,\infty)$
 to $[0,1)$. 
\end{Lemma}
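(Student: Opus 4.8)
The plan is to reduce the monotonicity of $\xi\mapsto K_1(\xi)/K_2(\xi)$ to a Cauchy--Schwarz inequality, after rewriting this ratio as a quotient of elementary momentum integrals. First I would establish the representation
$$ \frac{K_1(\beta)}{K_2(\beta)} \;=\; \frac{\displaystyle\int_{\R^3} E(\q)^{-1}\,e^{-\beta E(\q)}\,d\q}{\displaystyle\int_{\R^3} e^{-\beta E(\q)}\,d\q}\,, \qquad E(\q):=\sqrt{1+|\q|^2}\;\ge\;1\,. $$
The denominator equals $M(\beta)=\tfrac{4\pi}{\beta}K_2(\beta)$ by \eqref{eme}--\eqref{diecinueve}; for the numerator, passing to the variable $|\q|=\sinh r$ and integrating by parts (integrate $\tfrac{d}{dr}\bigl(\sinh r\,e^{-\beta\cosh r}\bigr)$ over $[0,\infty)$) turns $\int_{\R^3}E^{-1}e^{-\beta E}\,d\q$ into $\tfrac{4\pi}{\beta}K_1(\beta)$, so the quotient above is exactly $K_1/K_2$. (It is also the rest-frame instance of the identities collected in Lemma \ref{Jmoments}.) Consequently $K_1/K_2(\beta)$ is the $d\mu_\beta$-average of $1/E$ for the probability measure $d\mu_\beta\propto e^{-\beta E}\,d\q$, and since $0<1/E\le 1$ with $1/E<1$ away from the origin, this forces $0<K_1/K_2(\beta)<1$ for every $\beta>0$; at $\beta=0$ the value is $0$ by \eqref{kanear}, so the range is contained in $[0,1)$.

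For strict monotonicity I would set $g(\beta)=\int_{\R^3}E^{-1}e^{-\beta E}\,d\q$, $h(\beta)=\int_{\R^3}e^{-\beta E}\,d\q$ and $k(\beta)=\int_{\R^3}E\,e^{-\beta E}\,d\q$; all three integrals converge and are smooth in $\beta$ on $(0,\infty)$ by differentiation under the integral sign, with $g'=-h$ and $h'=-k$, whence
$$ \Bigl(\frac{K_1}{K_2}\Bigr)'(\beta)=\Bigl(\frac{g}{h}\Bigr)'(\beta)=\frac{g'h-gh'}{h^2}=\frac{gk-h^2}{h^2}\,. $$
Cauchy--Schwarz applied to the pair $E^{-1/2}e^{-\beta E/2}$ and $E^{1/2}e^{-\beta E/2}$ gives $h\le g^{1/2}k^{1/2}$, with equality only if $E$ is a.e.\ constant, which is impossible; hence $gk-h^2>0$ and $K_1/K_2$ is strictly increasing on $(0,\infty)$, and therefore on all of $[0,\infty)$ once the value $0$ is attached at the origin.

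Finally, $\beta\mapsto K_1/K_2(\beta)$ is continuous on $(0,\infty)$ (in fact $C^\infty$), is continuous at $0$ with value $0$ by \eqref{kanear}, and tends to $1$ as $\beta\to\infty$ by \eqref{karat}; a continuous, strictly increasing function on $[0,\infty)$ with these endpoint behaviours is a bijection onto $[0,1)$ by the intermediate value theorem, which is precisely the assertion. The main obstacle here is essentially bookkeeping: verifying the Bessel identity used in the first step and justifying differentiation under the integral sign as $\beta\to 0^+$. Both are routine because the momentum integrals converge absolutely and are dominated locally uniformly in $\beta>0$, so I anticipate no genuine difficulty — the whole argument rests on Cauchy--Schwarz together with the asymptotic expansions \eqref{karat}--\eqref{kanear} already at our disposal.
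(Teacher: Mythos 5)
The paper states this lemma as a quotation from \cite{BCNS} and gives no internal proof, so there is nothing in the text to compare your argument against; you have supplied a self-contained proof where the paper defers to a reference. Your argument is correct. The key move is the rest-frame identity
\[
\frac{K_1(\beta)}{K_2(\beta)} = \frac{\int_{\R^3} E(\q)^{-1} e^{-\beta E(\q)}\,d\q}{\int_{\R^3} e^{-\beta E(\q)}\,d\q}, \qquad E(\q)=\sqrt{1+|\q|^2},
\]
which you verify directly via $|\q|=\sinh r$ and an integration by parts (and which is, as you note, the $\u=0$ case of Lemma~\ref{Jmoments}\,(\ref{id:7})). This recasts $K_1/K_2$ as the mean of $1/E\in(0,1]$ under the Gibbs measure $\propto e^{-\beta E}\,d\q$, giving the strict bounds $0<K_1/K_2<1$ for $\beta>0$ at once. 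The derivative formula $(g/h)'=(gk-h^2)/h^2$ together with the strict Cauchy--Schwarz inequality $h^2<gk$ (equality would force $E$ constant) gives strict monotonicity on $(0,\infty)$, and the endpoint values $0$ and $1$ come from \eqref{kanear} and \eqref{karat}. One small point worth making explicit: at $\beta=0$ the integrals $g$ and $h$ both diverge, so the representation and the derivative formula live only on the open half-line; you correctly attach the limiting value $0$ at the origin via \eqref{kanear} and then infer strict increase and bijectivity onto $[0,1)$ from continuity and strict positivity on $(0,\infty)$. With that caveat noted, the proof is complete and the differentiation under the integral sign is justified by the locally uniform exponential domination you mention.
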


So defined, the right hand side of the equation (\ref{modelo}) verifies the following cancellation/conservation properties:
\begin{equation}
\label{veintiocho}
\int_{\R^3} q^\mu J_f {d\q\over q^0}= \int_{\R^3} q^\mu f {d\q\over q^0}, 
\end{equation}
\begin{equation}
\label{veintinueve}
\int_{\R^3}  J_f \frac{d\q}{q^0
} =\int_{\R^3}  f \frac{d\q}{q^0}.
\end{equation}
Therefore, the relaxation operator is determined in such a way that the  five conservation laws for the particle number, the energy, and the momentum hold, which is to say, the solutions to (\ref{modelo}) satisfy the following equation in divergence form
\begin{equation}
\label{conservationlaws}
 \sum_\nu {\partial N^\mu \over \partial x^\mu}=0, \quad \sum_\nu {\partial T^{\mu \nu} \over \partial
x^\nu}=0\:.
\end{equation}
These conservation laws are derived from the fact that the particles interact only through elastic collisions, without other forces and/or  radiation being involved. 
  
J\"uttner equilibria associated with a given distribution function satisfy a couple of useful extremality principles, as we now state.
\begin{Proposition}
Let $0\le f\in L^1(\R_{\q}^3)$ be given and let $J_f$ be the associated J\"uttner equilibrium defined by \eqref{eq:local_eq}. Then there holds that:
 \begin{enumerate}
 \item $
\left(\sigma-\beta e\right)_{J_f} - \left(\sigma-  \beta e\right)_f \ge 0.
$

\item 
\label{optimality}
$
\displaystyle \int_{\R_{\q}^3} J_f \log J_f \frac{d\q}{q^0} \le \int_{\R_{\q}^3} f \log f \frac{d\q}{q^0}.
$
 \end{enumerate}\label{entropymin}
\end{Proposition}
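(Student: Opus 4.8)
The plan is to read both inequalities as instances of a Gibbs-type (relative-entropy) estimate. The key structural fact is that, against the Lorentz-invariant measure $d\q/q^0$, the function $\log J_f$ is an affine combination of the collision invariants $1$ and $q^\mu$, namely
\[
\log J_f = \ell_f - \beta_f\,(u_f)_\mu q^\mu,\qquad \ell_f := \log\frac{n_f}{M(\beta_f)},
\]
and these are precisely the moments that $J_f$ is built to reproduce from $f$: by Lemma \ref{Jmoments} and the defining relation \eqref{betadef} one has $N^\mu_{J_f}=N^\mu_f$ and $\int J_f\,d\q/q^0=\int f\,d\q/q^0$, i.e. \eqref{veintiocho}--\eqref{veintinueve} hold with $f$ in the role of a solution. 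In particular $J_f$ shares the velocity field $u_f$ and the density $n_f$ with $f$, so $\sigma_f$ and $\sigma_{J_f}$ are both integrals against the common weight $(u_f)_\mu q^\mu$, which is $\ge 1>0$ by \eqref{scalaruq}. I will also use the elementary pointwise bound $a\log(a/b)\ge a-b$ for $a,b\ge 0$ (convexity of $s\mapsto s\log s$, with $0\log 0=0$), with equality iff $a=b$.

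For the second assertion I would write
\[
\int f\log f\,\frac{d\q}{q^0}-\int J_f\log J_f\,\frac{d\q}{q^0}
=\int f\log\frac{f}{J_f}\,\frac{d\q}{q^0}+\int(\log J_f)\,(f-J_f)\,\frac{d\q}{q^0}.
\]
Substituting $\log J_f=\ell_f-\beta_f(u_f)_\mu q^\mu$ in the last integral turns it into $\ell_f\int(f-J_f)\,d\q/q^0-\beta_f(u_f)_\mu\int q^\mu(f-J_f)\,d\q/q^0$, and both terms vanish by \eqref{veintinueve} and \eqref{veintiocho}. The first integral on the right is $\ge\int(f-J_f)\,d\q/q^0=0$ by the pointwise bound together with \eqref{veintinueve}, which gives the claim.

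For the first assertion I would run the same computation with the extra weight $(u_f)_\mu q^\mu$. Using $e_g=\int\big((u_f)_\mu q^\mu\big)^2 g\,d\q/q^0$ for $g\in\{f,J_f\}$ (Remark \ref{varios1}) and $\sigma_g=-\int(u_f)_\mu q^\mu\,g\log g\,d\q/q^0$, and again replacing $\log J_f$ by $\ell_f-\beta_f(u_f)_\mu q^\mu$ so that the terms proportional to $\beta_f(u_f)_\mu q^\mu J_f$ cancel, one arrives at
\[
(\sigma-\beta e)_{J_f}-(\sigma-\beta e)_f
=\int(u_f)_\mu q^\mu\Big(f\log\frac{f}{J_f}+\ell_f\,(f-J_f)\Big)\frac{d\q}{q^0}
=\int(u_f)_\mu q^\mu\,f\log\frac{f}{J_f}\,\frac{d\q}{q^0},
\]
where $\beta=\beta_f=\beta_{J_f}$ and the $\ell_f$-term dropped because $\ell_f(u_f)_\mu\int q^\mu(f-J_f)\,d\q/q^0=0$ by \eqref{veintiocho}. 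Splitting $f\log(f/J_f)=\big[f\log(f/J_f)-(f-J_f)\big]+(f-J_f)$, the bracket is $\ge 0$ pointwise and is multiplied by the positive weight $(u_f)_\mu q^\mu$, while $\int(u_f)_\mu q^\mu(f-J_f)\,d\q/q^0=0$ by \eqref{veintiocho} once more; hence the right-hand side is nonnegative.

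The computations are routine; the only point needing care is to ensure the splittings above are not of the indeterminate form $\infty-\infty$. Here $f\in L^1(\R^3_\q)$ already gives $\int f\,d\q/q^0<\infty$ and $n_f<\infty$ (Remark \ref{varios1}), and since $\log J_f$ grows at most linearly in $|\q|$ one gets $\int f|\log J_f|\,d\q/q^0<\infty$ and $\int J_f|\log J_f|\,d\q/q^0<\infty$. The relative entropies $\int f\log(f/J_f)\,d\q/q^0$ and its $(u_f)_\mu q^\mu$-weighted version are well defined in $[0,+\infty]$ by Jensen's inequality, so if a left-hand side is infinite the corresponding inequality is trivial, and otherwise every term above is finite and the manipulations are legitimate; likewise, in the first assertion $e_f=+\infty$ forces $(\sigma-\beta e)_f=-\infty$ and the inequality is again trivial. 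I do not expect any essential obstacle beyond this bookkeeping.
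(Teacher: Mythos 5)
Your argument is correct, and the calculations check out. For the second assertion, the decomposition
\[
\int f\log f\,\tfrac{d\q}{q^0}-\int J_f\log J_f\,\tfrac{d\q}{q^0}
=\int f\log\tfrac{f}{J_f}\,\tfrac{d\q}{q^0}+\int(\log J_f)(f-J_f)\,\tfrac{d\q}{q^0}
\]
with the second term vanishing by \eqref{veintiocho}--\eqref{veintinueve} and the first being a nonnegative relative entropy is, up to a reorganization, the same convexity-plus-moment-matching argument the paper gives (the paper instead integrates the pointwise tangent-line inequality $f\log f\ge J_f\log J_f+(1+\log J_f)(f-J_f)$ directly). For the first assertion, however, the paper only cites \cite{BCNS}, so here you add something: you run the same Gibbs argument with the extra weight $w:=(u_f)_\mu q^\mu$ and arrive at
\[
(\sigma-\beta e)_{J_f}-(\sigma-\beta e)_f=\int w\,f\log\tfrac{f}{J_f}\,\tfrac{d\q}{q^0},
\]
using that the $\beta_f w^2 J_f$-terms cancel between $\sigma_{J_f}$ and $-\beta e_{J_f}$, that the $\beta_f w^2 f$-terms cancel between $\sigma_f$ and $-\beta e_f$, and that $\ell_f\int w(f-J_f)\,d\q/q^0=0$ by \eqref{veintiocho}. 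The split $f\log(f/J_f)=[f\log(f/J_f)-(f-J_f)]+(f-J_f)$ then closes the argument since the bracket is pointwise nonnegative, $w\ge1>0$ by \eqref{scalaruq}, and $\int w(f-J_f)\,d\q/q^0=0$. You correctly observe that $n_{J_f}=n_f$, $u_{J_f}=u_f$, and $\beta_{J_f}=\beta_f$ (the first two from \eqref{veintiocho}, the last from \eqref{veintinueve}, \eqref{betadef} and Lemma \ref{lm:BCNS}), and you are right to point out explicitly that $T^{\mu\nu}_{J_f}\ne T^{\mu\nu}_f$ in general, so one cannot shortcut via $e_{J_f}=e_f$; the cancellation of the second-moment terms happens internally to each side. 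The integrability caveats at the end are appropriate and dealt with in the right spirit (the inequalities become trivial when the relevant left-hand sides are $-\infty$). In short: essentially the paper's approach for item~\ref{optimality}, and a self-contained proof of item~1 where the paper defers to a reference — a worthwhile addition.
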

\begin{proof}
The first point can be found in \cite{BCNS}. The proof of the second follows the lines of \cite{Kunik2004}:
As $x  \mapsto x \log x$ is a convex function, we get
$$
  f \log f \ge J_f \log J_f + \left. \frac{d (x \log x)}{dx} \right|_{J_f}(f - J_f),
$$
that is 
$$
 f \log f \ge J_f \log J_f + \left(1+\log \frac{n_f}{M(\beta_f)} -\beta_f (u_f)_\mu q^\mu \right)(f - J_f).
$$
We check that
$$
\int_{\R_{\q}^3} \left(1+\log \frac{n_f}{M(\beta_f)} -\beta_f (u_f)_\mu q^\mu \right) (f - J_f) \frac{d\q}{q^0} = 0
$$
thanks to \eqref{veintiocho} and \eqref{veintinueve}. The result follows.
\end{proof}

\subsection{Main result and comments on the proof}
Let us first introduce our notion of solution:
\begin{Definition}
Let $0\le f^0 \in L^1 (\R^6)$ and consider $T>0$. A function $f \in C([0,T),L^1(\R^6))$ is a weak solution of \eqref{modelo} in $[0,T)\times \R^6$ with initial datum $f^0$ if $f(t=0)=f^0$, $f(t)\ge 0$ a.e in $\R^6$ for every $t \in [0,T)$ and
$$
\int_0^T \int_{\R^6} f \partial_t \phi + f \frac{\q}{q^0}\cdot \nabla_{\x} \phi \, d\q \,d\x \,dt + \int_{\R^6} \phi(t=0) f^0\, d\q \, d\x= \int_0^T \int_{\R^6} \phi \frac{f-J_f}{q^0}\, d\q \, d\x \, dt
 $$
holds for every $\phi \in C_c^1([0,T)\times \R^6)$.
\end{Definition}

We can now state the main result of this document.
\begin{Theorem}
Let $f^0\ge 0$ a.e. $\R^6$ be such that
$$
\int_{\R^6} (1+q^0+|\x|+\log f^0) f^0\, d\q \, d\x <\infty.
$$
Then, given $T>0$ there exists a weak solution $f:[0,T)\times \R^6 \to \R_+$ of \eqref{modelo}  with initial datum $f^0$. {Furthermore, this weak solution satisfies an H-theorem,} in the sense that
$$
t \mapsto \int_{\R^6} f(t,  \x , \q ) \log f(t,  \x , \q)\, d\q \, d\x \quad \mbox{is a nonincreasing map for}\ t \in [0,T).
$$
\end{Theorem}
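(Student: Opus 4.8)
The plan is to construct the solution via an approximation scheme. First I would introduce a regularized BGK--Marle operator in which the temperature (equivalently $\beta_f$) is truncated away from its singular regimes $\beta\to 0$ and $\beta\to\infty$, and the macroscopic fields $n_f$, $u_f$ are mollified so that the map $f\mapsto J_f$ becomes Lipschitz in a suitable functional setting; this circumvents the failure of Lipschitz continuity of the density-to-momenta map noted in the introduction. For fixed regularization parameter $\eps$ one then solves the approximate equation
\[
\partial_t f_\eps + \frac{\q}{q^0}\cdot\nabla_\x f_\eps = \frac{J_{f_\eps}^\eps - f_\eps}{q^0}
\]
by a fixed-point/iteration argument along characteristics, using Duhamel's formula; nonnegativity of $f_\eps$ follows from the fact that the gain term $J_{f_\eps}^\eps/q^0$ is nonnegative and the loss term is linear. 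The conservation identities \eqref{veintiocho}--\eqref{veintinueve} (or their regularized analogues, asymptotically exact as $\eps\to0$) give control of mass and, together with the transport structure, of the first moments $\int q^0 f_\eps$ and $\int |\x| f_\eps$, uniformly on $[0,T)$.

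Next I would study the entropy. Multiplying the equation by $1+\log f_\eps$ and integrating, the transport term vanishes and one is left with a dissipation term of the form $\int (J_{f_\eps}-f_\eps)(\log f_\eps - \log J_f + \ldots)/q^0$, which by the convexity argument already used in the proof of Proposition \ref{entropymin} (item \ref{optimality}) has a sign: the entropy $\int f_\eps\log f_\eps$ is nonincreasing in time, yielding the H-theorem at the approximate level and, in particular, a uniform-in-$\eps$ bound on $\int (1+q^0+|\x|+|\log f_\eps|)f_\eps$ on $[0,T)$ (the negative part of $f_\eps\log f_\eps$ being controlled by the $q^0$ and $|\x|$ moments via the classical entropy-splitting estimate). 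These a priori bounds provide weak-$L^1$ compactness of $f_\eps$ (Dunford--Pettis) and weak compactness of the fluxes.

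The heart of the argument is then passing to the limit in the nonlinear term $J_{f_\eps}$. Weak convergence of $f_\eps$ alone is not enough because $J_f$ depends on $f$ through the moments $n_f$, $\int f\,d\q/q^0$ (hence $\beta_f$) and $u_f$ in a genuinely nonlinear way. Here I would invoke velocity-averaging lemmas: since $f_\eps$ solves a transport equation with a right-hand side bounded in $L^1$ (or in a slightly better space after using the entropy bound for equi-integrability), the velocity averages $\int f_\eps\,\psi(\q)\,d\q$ converge \emph{strongly} in $L^1_{loc}$ in $(t,\x)$ for suitable weights $\psi$. This gives strong convergence of $n_{f_\eps}$, $\int f_\eps\,d\q/q^0$, hence of $\beta_{f_\eps}$ via the continuity and monotonicity in Lemma \ref{lm:BCNS}, and of $u_{f_\eps}^\mu$ (on the set where $n_f>0$); consequently $J_{f_\eps}\to J_f$ strongly enough to identify the limit, and the regularizations can be removed simultaneously. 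One must also check that the entropy inequality is stable under this limit (lower semicontinuity of $f\mapsto\int f\log f$ along with the strong moment convergence suffices), which delivers the H-theorem for the limit $f$, and that $f\in C([0,T),L^1)$ with $f(0)=f^0$, which comes from the equation once the right-hand side is known to be in $L^1_{t,\x,\q}$.

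I expect the main obstacle to be the velocity-averaging step in the relativistic setting and its interaction with the degeneracy of $u_f$. The relativistic free-transport symbol $\tau + \q\cdot\xi/q^0$ has different dispersion properties than the classical one (the velocity $\q/q^0$ is bounded by the speed of light), so the standard averaging lemmas must be re-derived or adapted, and one needs averages against weights like $q^\mu$ and $q^\mu/q^0$ that grow or decay at infinity in $\q$ — requiring the $q^0$-moment bound to localize. Moreover, strong convergence of $n_{f_\eps}$ does not by itself control $u_{f_\eps}$ where $n_f$ vanishes; one must argue that on $\{n_f=0\}$ the product $n_f u_f^\mu=N^\mu$ still passes to the limit (it does, being a linear average of $f_\eps$), so $J_{f_\eps}=0$ there in the limit, and handle the borderline set carefully. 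Reconciling the truncation of $\beta$ with the a priori entropy bound — ensuring the truncation is never activated in the limit — is the delicate quantitative point that ties the whole scheme together.
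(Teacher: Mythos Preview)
Your overall architecture --- regularize, solve by fixed point, get uniform bounds, pass to the limit via averaging lemmas and strong convergence of moments --- matches the paper's. The passage-to-the-limit discussion (relativistic averaging, strong convergence of $n_{f_\eps}$, $\int f_\eps\,d\q/q^0$, hence of $\beta_{f_\eps}$ and $u_{f_\eps}$ on $\{n_f>0\}$, handling of the vacuum set via the linear average $N^\mu$) is essentially what the paper does in Section~\ref{sec:5}.

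There is, however, a genuine gap at the entropy step. You write that ``the entropy $\int f_\eps\log f_\eps$ is nonincreasing in time, yielding the H-theorem at the approximate level''. This is not true for any regularization that makes the fixed-point argument work. The convexity argument behind Proposition~\ref{entropymin}, item~\ref{optimality}, uses the exact moment-matching identities \eqref{veintiocho}--\eqref{veintinueve}; once you truncate $\beta$, truncate $\u$, or cut off in $\q$ (or mollify the macroscopic fields), those identities fail, and the would-be dissipation term acquires a residual
\[
\int_{\R^6}\tilde J[f_\eps]\log\tilde J[f_\eps]\,\frac{d\q}{q^0}\,d\x-\int_{\R^6}J[f_\eps]\log J[f_\eps]\,\frac{d\q}{q^0}\,d\x
\]
with no sign. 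The paper's entire Section~\ref{sec:4} (Proposition~\ref{entropy_estimate} and Lemmas~\ref{L5}--\ref{lm:17}) is devoted to bounding this residual by quantities $C_a(\beta_{sup})+C_b(\beta_{sup})\int f_\eps\log f_\eps$ that vanish as $\beta_{sup}\to\infty$; this requires tying the truncation parameters together ($R=\beta_{sup}^2$, $L=\beta_{sup}$), comparing $\tilde M$ to $M$, and separately estimating the contributions from the sets $\{\beta_f>\beta_{sup}\}$ and $\{\beta_f<\beta_{inf}\}$ via the asymptotics \eqref{karat}--\eqref{Mfar}. Only after Gronwall at the approximate level and then sending $\beta_{sup}\to\infty$ does one recover a genuine entropy bound, and the H-theorem for the limit.

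You hint at this in your final sentence (``reconciling the truncation of $\beta$ with the a priori entropy bound \ldots is the delicate quantitative point''), but the body of the proposal treats the approximate H-theorem as automatic. It is not: it is where most of the work in the paper lies, and your proof would need to supply those estimates.
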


The main idea of the proof is to build an appropriate functional environment to be able to apply fixed-point theorems. The difficulty comes from controlling the relaxation term in all its variables, especially those related to temperature and speed. To solve this problem it is necessary to truncate the relaxation term adequately, and work with weighted $L ^1$ spaces with respect to the proper volume element $d\q /q^0$ -which is invariant with respect to Lorentz transformations. The truncated thermodynamical fields satisfy density, temperature and velocity stability estimates that ensure the well-posedness of the truncated system. The corresponding approximate system then depends on the truncating parameters, and the first objective is to estimate these solutions and control some of their moments in term of these parameters. This control will allow to adapt the orders of magnitude of the parameters jointly, so that in the limit the moments associated with the approximate solutions can be estimated appropriately. To avoid concentrations in the limit we must also control the entropy in the approximate system independently of the truncation parameters. This analysis also allows us to obtain an H-theorem for the evolution of the distribution function.
\section{Set-up for an approximating scheme}
\label{sec:3}
The aim of this section is to study the  the following approximated problem:
\begin{equation}
\label{apro}
 \partial_t f + \frac{\q}{q^0}\cdot \nabla_{\x} f = \frac{\tilde{J}[f] - f}{q^0},
 \end{equation}
which will lead to an iterative scheme to build the solutions of \eqref{modelo}. The definition of the truncated relaxation operator $\tilde{J}[f] $ depends on three parameters $R, L,\beta_{sup}>1$ and a cutoff function $\varphi$. Let
$$
\tilde{J}[f] : = \varphi(\q) \frac{n_f}{\tilde{M}(\tilde{\beta}_f)} e^{-\tilde{\beta}_f (\tilde{u}_f)_\mu q^\mu},
$$
where:
\begin{itemize}

\item The cutoff $0\le \varphi \le 1$ is a smooth function such that $\varphi(\q)=1$ if $|\q|<R$ and $\varphi(\q)=0$ if $|\q|>2R$. More specifically, we pick $0\le \varphi_0 \le 1$ a smooth, radially symmetric, decreasing outwards function such that $\varphi_0(\q)=1$ if $|\q|<1$ and $\varphi_0(\q)=0$ if $|\q|>2$ and we let $\varphi(\q):=\varphi_0(\q/R)$ (note that we omit the $R$-dependence in the notation).

\item We define $\tilde{M}(\beta):= \int_{\R_{\q}^3} \varphi(\q) e^{-{\beta}  q^0}\, d\q$.
\item Let $\beta_{inf}:=1/\beta_{sup}$ and $\tilde{\beta}_f = \left\{ 
\begin{array}{ll}
\beta_{sup} & \mbox{if}\  \beta_f > \beta_{sup},
\\
\beta_f & \mbox{if}\ \beta_{inf}\le \beta_f \le \beta_{sup},
\\
\beta_{inf} & \mbox{if}\  \beta_f < \beta_{inf}.
\end{array}
\right.
$

\item The truncated four-velocity is defined through $\tilde{\u}_f = \left\{ 
\begin{array}{ll}
\u & \mbox{if}\  |\u|\le L,
\\
L \frac{\u}{|\u|} & \mbox{if}\  |\u|> L,
\end{array}
\right.
$ and $\tilde{u}_0= \sqrt{1+|\tilde{\u}|^2}$. 
\end{itemize}
\begin{Remark}
The following properties will be useful in the sequel:
\begin{enumerate}
\item $\tilde{M}(\tilde{\beta_f}_1)<\tilde{M}(\tilde{\beta_f}_2)$ for $\tilde{\beta_f}_1>\tilde{\beta_f}_2$, 
\item $\tilde{M}(\beta) \le M(\beta)$ for every $\beta \in (0,\infty)$,
\item $\tilde u^\mu$ so defined verifies $\tilde u^\mu \tilde u_\mu = 1$.
\end{enumerate}
\end{Remark}
The initial datum $f^0$ is regularized by truncation to ensure that the $\q$-support is contained in $\{|\q|\le 2R\}$. 

In what follows we will impose some constraints on $R, L$ and $\beta_{sup}$ in order to have a single regularizing parameter in our approximating scheme. We shall define
\begin{equation}
\label{eq:fix}
 R:= \beta_{sup}^2\, ,\quad L:= \beta_{sup}.
\end{equation}
Although we impose \eqref{eq:fix} to hold during the rest of the document, we will keep the notations $R, L$ at those parts where we find it informative. 

The main result of this section is the following existence result for the approximating scheme.
\begin{Theorem}
\label{aprox}
If $f^0$ is supported in $\R^3 \times \{|\q|\le 2R\}$, then  there exists a unique solution $f \in C([0,T),L^1)$ to \eqref{apro}.
\end{Theorem}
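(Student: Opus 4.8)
The plan is to solve \eqref{apro} by a fixed-point/contraction argument in $C([0,\tau],L^1(\R^6))$ for a small time $\tau$ that depends only on the truncation parameters, and then to iterate over $[0,T)$ since the a priori $L^1$ bound does not degenerate. First I would rewrite \eqref{apro} in mild (Duhamel) form along characteristics: since the transport is free streaming with velocity $\q/q^0$, writing $S(t)g(\x,\q):=g(\x-\tfrac{\q}{q^0}t,\q)$ for the transport semigroup and noting $S(t)$ is an $L^1$-isometry, a function $f$ solves \eqref{apro} with datum $f^0$ iff
\begin{equation}
\nonumber
f(t)=e^{-t/q^0}S(t)f^0+\int_0^t e^{-(t-s)/q^0}S(t-s)\,\frac{\tilde J[f](s)}{q^0}\,ds .
\end{equation}
Here the factor $e^{-t/q^0}$ comes from treating the $-f/q^0$ term as a (momentum-dependent but $\x$-independent) damping; it is bounded by $1$. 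I would then set up the map $\Phi:f\mapsto$ (right-hand side above) on the ball $B_{M}:=\{f\in C([0,\tau],L^1):\ \|f(t)\|_{L^1}\le M,\ f\ge0\}$ with $M:=2\|f^0\|_{L^1}$, and show $\Phi$ is a contraction there for $\tau$ small.

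The two things to check are (i) $\Phi$ maps $B_M$ into itself and preserves positivity, and (ii) $\Phi$ is Lipschitz with a small constant. Positivity is immediate because $\tilde J[f]\ge0$ whenever $f\ge0$ (it is a positive multiple of a Juttner-type exponential times $\varphi\ge0$), and the integral kernel is positive. For the $L^1$ bound, the key estimate is $\|\tilde J[f]\|_{L^1(\R^6)}\le C(R,L,\beta_{sup})\,\|f\|_{L^1(\R^6)}$: indeed $n_f\le\int f\,d\q$ (Remark \ref{varios1}), $\tilde M(\tilde\beta_f)\ge\tilde M(\beta_{sup})=\int\varphi e^{-\beta_{sup}q^0}d\q>0$ is bounded below by a constant depending only on $R,\beta_{sup}$, and $\int_{\R^3}\varphi(\q)e^{-\tilde\beta_f(\tilde u_f)_\mu q^\mu}d\q\le \int_{|\q|\le 2R}d\q$ since the exponent is $\le -\tilde\beta_f\le 0$ by \eqref{scalaruq}-type positivity of $(\tilde u_f)_\mu q^\mu$. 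Hence along the Duhamel formula $\|f(t)\|_{L^1}\le\|f^0\|_{L^1}+C\tau M\le M$ for $\tau\le 1/(2C)$, after which one reabsorbs and obtains in fact $\|f(t)\|_{L^1}\le e^{Ct}\|f^0\|_{L^1}$ on the whole interval by Grönwall, which is what lets us iterate up to $T$.

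The main obstacle is the Lipschitz estimate (ii), i.e.\ bounding $\|\tilde J[f]-\tilde J[g]\|_{L^1}$ by $C(R,L,\beta_{sup})\|f-g\|_{L^1}$, because $\tilde J$ depends on $f$ through the nonlinear and a priori only locally Lipschitz maps $f\mapsto(n_f,\u_f,\beta_f)$. The point of the truncations is precisely to make each of these maps globally Lipschitz on the relevant range once composed with the cutoffs: the truncated velocity $\tilde\u_f$ and truncated inverse temperature $\tilde\beta_f$ take values in the fixed compact sets $\{|\u|\le L\}$ and $[\beta_{inf},\beta_{sup}]$, and $\tilde M(\tilde\beta_f)$ is then bounded above and below by positive constants and is Lipschitz in $\tilde\beta_f$; the map $\beta\mapsto K_1(\beta)/K_2(\beta)$ is smooth with nonvanishing derivative on $[\beta_{inf},\beta_{sup}]$ by Lemma \ref{lm:BCNS}, so inverting it is Lipschitz there. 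What still needs care is that $n_f$, $\u_f=\mathbf{N}_f/n_f$ and the ratio $\int f\,d\q/q^0\ /\ n_f$ defining $\beta_f$ could in principle be small in $n_f$; one handles this by a case analysis on whether $n_f,n_g$ are comparable, using that $N^\mu$ is controlled pointwise by $\int f\,d\q$ and that $n_f^2=N^\mu N_\mu=\int\!\!\int \frac{q^\mu q'_\mu}{q^0q'^0}ff'$ with $q^\mu q'_\mu\ge1$, so $n_f$ is equivalent to $\int f\,d\q/q^0$ up to the support-dependent weight $q^0\le\sqrt{1+4R^2}$; the small-$n_f$ regime is absorbed because there $\tilde J[f]$ is itself small in $L^1$. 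Assembling these, $f\mapsto\tilde J[f]$ is Lipschitz $L^1\to L^1$ with constant $C(R,L,\beta_{sup})$, whence $\Phi$ is a contraction on $B_M$ for $\tau$ small depending only on $C$, giving a unique local solution; the uniform-in-time $L^1$ bound then extends it to a unique $f\in C([0,T),L^1)$.
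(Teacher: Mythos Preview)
Your overall strategy --- Duhamel formulation, contraction in $C([0,\tau],L^1)$, and iteration using the global $L^1$ bound --- is exactly the paper's approach, and you have correctly identified the key structural ingredient: thanks to the compact $\q$-support, $n_f$ is bounded below by $(1+4R^2)^{-1/2}\int f\,d\q$ (this is the paper's Lemma~\ref{lm:}).

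The one soft spot is your handling of the Lipschitz estimate for $\tilde J$. A ``case analysis on whether $n_f,n_g$ are comparable'' together with ``the small-$n_f$ regime is absorbed because $\tilde J[f]$ is small'' does not actually close: when both $n_f$ and $n_g$ are small you only get $|\tilde J[f]-\tilde J[g]|\lesssim n_f+n_g$, which is not controlled by $\int|f-g|\,d\q$. The paper avoids any case analysis. It first shows (Lemma~\ref{lipmoments}) that
\[
|\tilde\u_1-\tilde\u_2|+|\tilde\beta_1-\tilde\beta_2|\le \frac{C(R,L,\beta_{sup})}{\max(n_1,n_2)}\int_{|\q|\le 2R}|f_1-f_2|\,d\q,
\]
the denominator being the \emph{larger} of the two densities, not the smaller. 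Then, in the product-rule expansion of $\tilde J[f_1]-\tilde J[f_2]$ (Lemma~\ref{lm:stab}), the terms carrying the velocity and temperature increments come with a prefactor $n_1+n_2$, and the elementary bound $(n_1+n_2)/\max(n_1,n_2)\le 2$ produces a uniform Lipschitz constant with no smallness hypothesis on either density. Once you replace the case analysis by this direct cancellation, your argument goes through verbatim.
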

\begin{proof}
Given $f \in L^\infty([0,T); L^1(\R^3 \times \{|\q|\le 2R\}))$ we define $T[f]$ as the solution of 
\[
 \partial_t T[f] + \frac{\q}{q^0}\cdot \nabla_{\x} T[f] = \frac{\tilde{J}[f] - T[f]}{q^0}
\]
with initial datum $f^0$. The previous system can be solved using classical arguments of kinetic theory that involve analyzing the associated characteristic dynamic system, whose transport field $\frac{\q}{q^0}$ is regular in this case. Therefore, the characteristic system is bounded, 
 from which the necessary bound in $L^1$ is derived and then the {\it a priori} estimate in time in $W^{1,1}$, which provides the continuity in time with values in $L^1$ in momentum and space.

The main tool in the proof is the following lemma
\begin{Lemma}
\label{lm:stab}
  Let $f_1,f_2\in L^1(\{|\q|\le 2R\})^+$ with associated thermodynamical fields $n_i = n_{f_i}$, $\u_i=\u_{f_i}$, $\beta_i= \beta_{f_i}$, $i=1,2$. The truncated relaxation operator satisfies the following stability estimate
  \[
  |\tilde J[f_1]-\tilde J[f_2]|\leq C_1(\q)\, \frac{\varphi(\q)}{\tilde{M}({\beta}_{sup})}\,e^{-{\beta}_{inf} |\q|/3L}\,\int_{|\q|\le 2R} |f_1-f_2|\,d\q,
  \]
  where
  \begin{eqnarray*}
C_1(\q)&\hspace{0.1cm}=&C_1(\q,R,L,\beta_{sup},\varphi)\\&:=&2\,\sqrt{1+4R^2}+\left(1+2\,\sqrt{1+4R^2}\,\sqrt{1+L^2}\right)\,\left( 2 R C_2 \frac{\tilde M(\beta_{inf})}{\tilde{M}({\beta}_{sup})}+ \beta_{sup}\,q^0\right)
\end{eqnarray*}
\label{LipschitztildeJ}
and $C_2$ is defined in Lemma \ref{lipmoments} below.
  \end{Lemma}
This lemma requires a more thorough analysis of the a priori estimates which is conducted in the rest of the section with the proof of the lemma at the end of subsection~\ref{Lipschitzbounds}.

Thanks to Lemma \ref{LipschitztildeJ}, we have the estimate
\begin{eqnarray*}
\int_{\R^6} |T[f]-T[g]|(t)\, d\x d\q &\le& \int_0^t \int_{\R^6} \left|\tilde{J}[f]-\tilde{J}[g] \right|(\tau)\, d\x \frac{d\q}{q^0} d\tau\\
&\le& C_3 \int_0^t \int_{\R^6} |f-g|(\tau)\, d\x \,d\q \, d\tau
\end{eqnarray*}
with
\[
C_3=C_3(\q,R,L,\beta_{sup},\varphi):= \int_{\R_{\q}^3}  C_1(\q)\, \frac{\varphi(\q)}{\tilde{M}({\beta}_{sup})}\,e^{-{\beta}_{inf} |\q|/3L} \frac{d\q}{q^0} <\infty,
\]
where $C_1$ is defined in Lemma \ref{lm:stab} above. Then, we are entitled to invoke Picard' fixed point theorem together with a prolongation argument to deduce the well-posedness of the problem.
\end{proof}
The aim of the rest of the section is to build up the cascade of estimates that will ultimately lead to the Lipschitz properties of $T[f]$ as stated in Lemma \ref{LipschitztildeJ}.

\subsection{A priori estimates}

\begin{Lemma}
\label{moment_estimates}
Let $f^0(\x,\q)$ supported in $\R^3 \times \{|\q|\le 2R\}$ be given. Let $f(t,\x,\q)$ be a solution to \eqref{apro} with $f^0$ as initial datum. Then,
\begin{enumerate}

\item If $ f^0\ge 0$, then $f(t)\ge 0, \quad \forall \ t\ge 0$.

\item If $\displaystyle  \int_{\R_{\x}^3} \int_{\R_{\q}^3} (1+|\x|+q^0) f^0\ d\x \, d\q <\infty$, then $\displaystyle  \int_{\R_{\x}^3} \int_{\R_{\q}^3} (1+|\x|+q^0) f(t)\, d\x \, d\q$ is bounded on bounded time intervals.

\end{enumerate}
\end{Lemma}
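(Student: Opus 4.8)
The plan is to exploit the fact that \eqref{apro} is a linear transport equation once the right-hand side coefficient $\tilde{J}[f]$ is frozen (which it is, since $f$ is a given solution), so everything reduces to tracking weighted $L^1$ norms along characteristics. First I would establish positivity. Rewriting \eqref{apro} in the form $\partial_t f + \frac{\q}{q^0}\cdot\nabla_\x f + \frac{1}{q^0}f = \frac{\tilde{J}[f]}{q^0}$ and integrating along the characteristic curves $\dot{\x} = \q/q^0$ (with $\q$ constant along them, since the transport field depends only on $\q$), one gets the Duhamel representation
\[
f(t,\x,\q) = e^{-t/q^0} f^0\!\left(\x - t\,\tfrac{\q}{q^0},\,\q\right) + \int_0^t e^{-(t-s)/q^0}\,\frac{\tilde{J}[f]\!\left(s,\x-(t-s)\tfrac{\q}{q^0},\q\right)}{q^0}\,ds.
\]
Since $\tilde{J}[f]\ge 0$ (it is a product of the nonnegative cutoff $\varphi$, the nonnegative prefactor $n_f/\tilde{M}(\tilde\beta_f)$, and an exponential) whenever $f\ge 0$ — and more to the point $n_f\ge 0$ always, so $\tilde{J}[f]\ge 0$ unconditionally — the right-hand side is a sum of nonnegative terms provided $f^0\ge 0$. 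This gives item (1) directly. It also shows $0\le f(t,\x,\q)\le e^{-t/q^0}f^0(\cdots) + \int_0^t e^{-(t-s)/q^0}\tilde{J}[f](\cdots)\,q^0{}^{-1}\,ds$, which is the bound we iterate for item (2).

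For item (2), I would test \eqref{apro} against the weights $1$, $q^0$, and $|\x|$ in turn and integrate over $\R^6$. For the weight $1$: the transport term integrates to zero, and conservation \eqref{veintinueve} does \emph{not} directly apply to $\tilde{J}[f]$ (it is the truncated operator, not $J_f$), so instead one simply bounds $\int_{\R^6}\frac{\tilde{J}[f]-f}{q^0}\,d\x\,d\q$ crudely: $\int \tilde{J}[f]\,q^0{}^{-1}\,d\q = n_f \tilde{M}(\tilde\beta_f)^{-1}\int\varphi(\q)e^{-\tilde\beta_f u_\mu q^\mu}q^0{}^{-1}\,d\q \le n_f$, using $u_\mu q^\mu\ge 1$ and that $\tilde{M}(\tilde\beta_f)$ is built from exactly the same integrand with $\tilde u$ replaced by the rest frame — more carefully, one uses $\tilde M(\tilde\beta_f) = \int\varphi e^{-\tilde\beta_f q^0}\,d\q$ together with a change of variables / the inequality $u_\mu q^\mu \ge q^0/\sqrt{1+|\u|^2}$ to compare. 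In any case $\int\tilde{J}[f]q^0{}^{-1}\,d\q \le C\, n_f \le C\int f\,d\q$ with $C$ depending on the truncation parameters, hence $\frac{d}{dt}\int f\,d\x\,d\q \le C\int f\,d\x\,d\q$ (here using Remark \ref{varios1}(5) that $n_f\le\int f\,d\q$), and Gronwall controls $\int f\,d\x\,d\q$ on $[0,T]$. For the weight $q^0$: again the transport term vanishes upon integration, and one needs $\int q^0\tilde{J}[f]\,q^0{}^{-1}\,d\q = \int\tilde{J}[f]\,d\q \le C n_f$ — here the cutoff $\varphi$ supported in $|\q|\le 2R$ makes this immediate since $\int\varphi e^{-\tilde\beta_f u_\mu q^\mu}\,d\q \le \tilde M(\beta_{inf})$ up to the $\sqrt{1+|\u|^2}$ factor which is bounded by $\sqrt{1+L^2}$. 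So $\frac{d}{dt}\int q^0 f\,d\x\,d\q \le C\int f\,d\x\,d\q \le C'$, giving a linear-in-time bound. For the weight $|\x|$: $\frac{d}{dt}\int|\x|f\,d\x\,d\q \le \int\left|\frac{\q}{q^0}\right|f\,d\x\,d\q + \int|\x|\frac{\tilde{J}[f]}{q^0}\,d\x\,d\q$; the first term is $\le \int f\,d\x\,d\q$, and the second is handled by the Duhamel/characteristic bound above — $|\x|$ along a backward characteristic from $(\x,\q)$ at time $s<t$ is $\le|\x| + (t-s)|\q/q^0| \le |\x|+T$, so $\int|\x|\tilde{J}[f](s,\cdot)\,q^0{}^{-1}\,d\x\,d\q$ is controlled by $\int(|\x|+T)f(s,\cdot)\,d\x\,d\q$ after unwinding the convolution. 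Collecting the three, a coupled linear Gronwall inequality closes the estimate on any bounded time interval.

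The main obstacle, in my view, is purely bookkeeping: one must be careful that the constants produced are finite (they will depend on $R,L,\beta_{sup}$, which is fine for this lemma — uniformity in the parameters is \emph{not} claimed here and is the business of Section \ref{sec:4}) and that the comparison $\int\varphi(\q)e^{-\tilde\beta_f(\tilde u_f)_\mu q^\mu}\,d\q \le C(L)\,\tilde M(\tilde\beta_f)$ is correctly justified. The cleanest route to this last point is the change of variables induced by a Lorentz boost taking $\tilde u_f$ to the rest frame: since $d\q/q^0$ is Lorentz-invariant and $(\tilde u_f)_\mu q^\mu$ becomes $(q')^0$ in the boosted frame, one gets $\int\varphi e^{-\tilde\beta_f(\tilde u_f)_\mu q^\mu}\frac{d\q}{q^0} = \int(\varphi\circ\Lambda^{-1})e^{-\tilde\beta_f (q')^0}\frac{d\q'}{(q')^0}$, and the support of $\varphi\circ\Lambda^{-1}$ is contained in a ball of radius $\le C(R,L)$ because the boost velocity is bounded by $L$. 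This converts all the relaxation-term integrals into elementary bounds. I would also note that nothing here uses the initial entropy bound — only the three moment bounds on $f^0$ — which is consistent with the structure of the paper.
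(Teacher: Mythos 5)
Your proposal follows the paper's own strategy closely: positivity via the characteristics/Duhamel representation, then moment estimates by testing \eqref{apro} against the weights $1$, $q^0$, $|\x|$ and closing with Gronwall, using bounds on the $\q$-integrals of $\tilde J[f]$. You are also right — and more careful than the paper's slightly cavalier phrasing — that the exact conservation identities \eqref{veintiocho}--\eqref{veintinueve} do \emph{not} hold for the truncated operator $\tilde J[f]$, so one only gets inequalities; your Lorentz-boost change of variables is indeed the clean way to compare $\int\varphi e^{-\tilde\beta_f(\tilde u_f)_\mu q^\mu}\,d\q$ (and its $d\q/q^0$ version) with $M(\tilde\beta_f)$.

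The one point to flag: as you concede, your Gronwall constants depend on $R$, $L$, $\beta_{sup}$, e.g.\ through the factor $\sqrt{1+L^2}$ in the $q^0$-moment. That is enough to prove the lemma as literally stated, but it is weaker than what the paper's proof delivers, namely constants independent of $\beta_{sup}$ (once $\beta_{sup}\ge\bar\beta_1$), and that uniformity is in fact used downstream: the constants $C_8,\dots,C_{12}$ in Proposition \ref{ease} and Lemmas \ref{ll5}--\ref{L5} are required to be independent of $\beta_{sup}$ yet are built from $\int(1+|\x|+q^0)f(t)\,d\x\,d\q$. Your argument becomes uniform with a small tweak: since $|\tilde\u_f|\le|\u_f|$, the boost computation gives
\begin{equation*}
\int_{\R^3}\tilde J[f]\,d\q \;\le\; \frac{M(\tilde\beta_f)}{\tilde M(\tilde\beta_f)}\,n_f\sqrt{1+|\tilde\u_f|^2}\;\le\; \frac{M(\tilde\beta_f)}{\tilde M(\tilde\beta_f)}\,n_f\sqrt{1+|\u_f|^2}\;=\;\frac{M(\tilde\beta_f)}{\tilde M(\tilde\beta_f)}\int_{\R^3}f\,d\q
\end{equation*}
by \eqref{rel1}, and $M/\tilde M\le 2$ by Lemma \ref{lcomp} for $\beta_{sup}$ large, so no $L$-dependent factor survives. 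Similarly $\int_{\R^3}\tilde J[f]\,\frac{d\q}{q^0}\le \frac{M}{\tilde M}(\tilde\beta_f)\,n_f\,\frac{K_1}{K_2}(\tilde\beta_f)\le 2n_f\le 2\int f\,d\q$, using $K_1/K_2<1$, which removes the parameter-dependence from the $\int f$ and $\int|\x|f$ estimates as well.
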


\begin{proof}
Non-negativity follows by writing the solution in terms of characteristics, so that clearly
$$
\frac{d}{dt} [f(t,\x+\q t/q^0,\q)] \ge -\frac{1}{q_0}\, f (t,\x+\q t/q^0,\q)
$$
and hence
$$
f(t,\x,\q) \ge e^{-t/q_0} f^0(\x-\q t/q^0,\q)\ge 0,\quad \forall t\ge 0.
$$

Let us develop now the moment estimates. We start by integrating in \eqref{apro}:
$$
\frac{d}{dt} \int_{\R^6} f \, d\q\, d\x \le \int_{\R_{\x}^3} \ n \frac{K_1}{K_2}(\tilde \beta) \, d\x \le \int_{\R_{\x}^3} \sqrt{1+|\u|^2} n \, d\x =\int_{\R^6} f\,d\q\,d\x,
$$
by using \eqref{rel1}. We hence easily find that
$$
\int_{\R^6} f \, d\q\, d\x \le e^t \int_{\R^6} f^0 \, d\q\, d\x.
$$
Next we multiply \eqref{apro} by $|\x|^k, k >0$ and integrate to find
$$
\frac{d}{dt} \int_{\R^6} |\x|^k f \, d\q\, d\x - \int_{\R^6}\frac{\q}{q^0}\cdot \nabla (|\x|^k) f \, d\q \, d\x \le k \int_{\R_{\x}^3} |\x|^{k-1} n \, d\x \le k \int_{\R^6} |\x|^{k-1} f \, d\q\, d\x.
$$
If we choose $k=1$ we get 
$$
\frac{d}{dt} \int_{\R^6} |\x| f \, d\q d\x  \le \int_{\R^6} f \, d\q \, d\x.
$$
Using the previous point,
$$
\frac{d}{dt} \int_{\R^6} (1+|\x|) f \, d\q\, dx  \le 2 \int_{\R^6} (1+|\x|) f \, d\q \, d\x
$$
and hence
$$
\int_{\R^6} (1+|\x|) f \, d\q\, d\x \le e^{2t} \int_{\R^6} (1+|\x|) f^0\, d\q \, d\x.
$$
Low momenta in $q^\mu$ can be controlled likewise. Multiplying \eqref{apro} by $q^0$ and integrating we get 
$$
\frac{d}{dt} \int_{\R^6} q^0  \, d\q\, d\x \le \int_{\R_{\x}^3} \tilde J[f ] \, d\x = \int_{\R^6} f \, d\q\, d\x \le \int_{\R^6} q^0 f \, d\q \, d\x,
$$
which implies
$$
\int_{\R^6} q^0 f  \, d\q \, d\x \le e^t \int_{\R^6} q^0 f^0 \, d\q \, d\x.
$$
\end{proof}
We now prove some auxiliary estimates that will help us to assess the convergence of the approximating scheme.
\begin{Lemma}
\label{l:5}
Let $L\ge 1$. Then we have that
$$
 e^{-{\beta} \tilde{u}_\mu q^\mu}\le e^{-{\beta}|\q|/(3L) }, \quad \forall \q \in \R^3.
$$
\end{Lemma}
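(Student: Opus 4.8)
The plan is to reduce the exponential inequality to a pointwise lower bound on the Lorentz product. Since $\beta>0$ and $x\mapsto e^{-x}$ is decreasing, the assertion $e^{-\beta\,\tilde u_\mu q^\mu}\le e^{-\beta|\q|/(3L)}$ is equivalent to
$$
\tilde u_\mu q^\mu \ \ge\ \frac{|\q|}{3L}\qquad\text{for every }\q\in\R^3.
$$
First I would record two basic facts. On one hand, since $\tilde u^\mu$ satisfies $\tilde u^\mu\tilde u_\mu=1$ (as noted right after the definition of $\tilde J[f]$), the vector $\tilde u^\mu$ lies in $\mathbb M_1$, so Cauchy--Schwarz for timelike vectors of $\mathbb M_1$ yields $\tilde u_\mu q^\mu\ge 1$, exactly as in \eqref{scalaruq}. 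On the other hand, writing $a:=|\tilde\u_f|$ and $r:=|\q|$ and using $\tilde u^0=\sqrt{1+a^2}$, $q^0=\sqrt{1+r^2}$ together with $\tilde\u_f\cdot\q\le ar$, one has the explicit lower bound
$$
\tilde u_\mu q^\mu \ =\ \sqrt{1+a^2}\,\sqrt{1+r^2}-\tilde\u_f\cdot\q\ \ge\ \sqrt{1+a^2}\,\sqrt{1+r^2}-ar\ =\ \frac{1+a^2+r^2}{\sqrt{1+a^2}\,\sqrt{1+r^2}+ar},
$$
the last identity obtained by multiplying and dividing by $\sqrt{1+a^2}\,\sqrt{1+r^2}+ar$ and simplifying $(1+a^2)(1+r^2)-a^2r^2=1+a^2+r^2$.

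Then I would split into two cases according to the size of $r=|\q|$. If $r\le 3L$, the trivial bound $\tilde u_\mu q^\mu\ge 1\ge r/(3L)$ already closes the case. If $r>3L$, I use that $a=|\tilde\u_f|\le L$ by construction of the truncated velocity and that $L\ge1$, so in particular $r>3L\ge3>1$. For the numerator, $1+a^2+r^2\ge r^2$. For the denominator, $\sqrt{1+a^2}\le\sqrt{1+L^2}\le\sqrt2\,L$, $\sqrt{1+r^2}\le\sqrt2\,r$ (as $r\ge1$), and $ar\le Lr$, whence $\sqrt{1+a^2}\,\sqrt{1+r^2}+ar\le 2Lr+Lr=3Lr$. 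Combining,
$$
\tilde u_\mu q^\mu\ \ge\ \frac{r^2}{3Lr}\ =\ \frac{r}{3L}\ =\ \frac{|\q|}{3L},
$$
which is the desired inequality in the remaining case, and the lemma follows.

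There is no serious obstacle here: the only mildly delicate point is the denominator estimate, and the constant $3$ in the exponent is exactly what the three bounds $\sqrt{1+a^2}\le\sqrt2\,L$, $\sqrt{1+r^2}\le\sqrt2\,r$, $ar\le Lr$ produce. It is worth noting that it is the strict separation $a\le L<r/3$ on the set $\{|\q|>3L\}$ — not merely $a\le L$ — that forces $\tilde u_\mu q^\mu$ to grow linearly in $|\q|$; away from that set one falls back on the universal bound $\tilde u_\mu q^\mu\ge1$. Everything else is elementary algebra.
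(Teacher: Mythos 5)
Your argument is correct, but it takes a somewhat different route than the paper. You rationalize the full expression $\sqrt{1+a^2}\sqrt{1+r^2}-ar = (1+a^2+r^2)/(\sqrt{1+a^2}\sqrt{1+r^2}+ar)$ keeping both $a=|\tilde\u|$ and $r=|\q|$ alive, then split into cases $r\le 3L$ and $r>3L$, using $\tilde u_\mu q^\mu\ge 1$ for the first and a numerator/denominator bound for the second. The paper's proof avoids the case split altogether: it first discards the ``$1$'' in $\sqrt{1+|\q|^2}$ to factor out $|\q|$ cleanly (giving $\ge |\q|\,(\sqrt{1+|\tilde\u|^2}-|\tilde\u|)$), then invokes that $x\mapsto\sqrt{1+x^2}-x$ is decreasing in $x\ge0$ to push $|\tilde\u|$ up to its worst-case value $L$, and only then rationalizes the one-variable quantity $\sqrt{1+L^2}-L = 1/(\sqrt{1+L^2}+L)\ge 1/(3L)$. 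The monotonicity observation is what lets the paper handle the small-$|\q|$ region without a separate case, making the chain a single line of inequalities; your version makes the same constant $3L$ emerge from $\sqrt2\cdot\sqrt2 + 1 = 3$ in the denominator bound, which is perhaps a touch more explicit about where the constant comes from, but at the price of the case analysis and the use of the Cauchy--Schwarz lower bound \eqref{scalaruq}. Both are perfectly sound and of comparable length; the paper's is arguably the slicker one.
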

\begin{proof}
Just follow the chain of inequalities:
\begin{equation}
\nonumber
\begin{array}{rl}
\displaystyle
\tilde{u}_\mu q^\mu & \displaystyle
= \sqrt{1+|\q|^2} \sqrt{1+|\tilde{\bf u}|^2} - \tilde{\bf u} \cdot \q
 \displaystyle
\ge \sqrt{1+|\q|^2} \sqrt{1+|\tilde{\bf u}|^2} - |\q| |\tilde{\bf u}|
\\ \\
& \displaystyle
\ge |\q| \sqrt{1+|\tilde{\bf u}|^2} - |\q| |\tilde{\bf u}|
 \displaystyle
\ge |\q| (\sqrt{1+L^2} - L)
 \displaystyle
\ge |\q|/(3 L).
\end{array}
\end{equation}
We used that $x\mapsto \sqrt{1+x^2}-x$ is decreasing for $x>0$ to replace $|\tilde{\bf u}|$ by $L$ in the last line. Last step follows from
$$
\sqrt{1+L^2} - L = \frac{1}{\sqrt{1+L^2} + L} \ge \frac{1}{3L},
$$
which holds for $L\ge 1$.
\end{proof}
To proceed further we introduce some shorthand notations for various residuals that will appear recurrently in the sequel
\begin{Definition}
Let us consider the following positive quantities:
\begin{eqnarray} \label{phi} \qquad
\Phi(R)&=&\Phi(R;L,\beta):=\int_{|\q|\ge R} e^{-\beta |\q|/(3 L)} d\q = \frac{108 \pi L^3}{\beta^3} e^{-\frac{\beta R}{3 L}} \left(\frac{\beta^2 R^2}{9 L^2} + \frac{2 \beta R}{3 L} + 2 \right),
\\
\label{lambda}
\Lambda(R)&=&\Lambda(R;\beta)
:= \int_{|\q|\ge R} e^{-\beta |\q|} d\q= \frac{4 \pi}{\beta^3} e^{-\beta R} (\beta^2 R^2 + 2\beta R +2).
\end{eqnarray}
\end{Definition}
Note that as a consequence of Lemma \ref{l:5} we have
$$
\int_{|\q|\ge R} e^{-{\beta} \tilde{u}_\mu q^\mu} d\q \le \Phi(R;L,\beta),
$$
when $L\ge 1$. We exploit this a bit further to control the approximation of $M(\beta)$ by $\tilde{M}(\beta)$.
\begin{Lemma}
\label{lcomp}
We have
$$
|\tilde M(\beta)-M(\beta)|\le \Lambda(2R).
$$
 As a consequence, there exists some $\bar \beta_1>0$ with the  property: for every pair of values $\beta \ge \bar \beta_1$ and $R>1$ such that
\begin{equation}
\label{eq:suffcond}
\frac{2^{5/2}}{\sqrt{\pi}\beta^{3/2}}(4 \beta^2 R^2+4 \beta R +2) e^{-2\beta R} \le e^{-\beta}
\end{equation}
the following inequality
\begin{equation}
\label{ratio_est}
\frac{1}{\tilde M (\beta)} 
\le \frac{2}{M(\beta)}
\end{equation}
holds.
\end{Lemma}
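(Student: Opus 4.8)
The plan is to control $M(\beta)-\tilde M(\beta)$ by an explicit tail integral and then convert this into \eqref{ratio_est} via the large-$\beta$ behaviour of $M$ recorded in \eqref{Mfar}.

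For the first estimate I would start from the identity
\[
M(\beta)-\tilde M(\beta)=\int_{\R^3}\bigl(1-\varphi(\q)\bigr)\,e^{-\beta q^0}\,d\q .
\]
Since $0\le\varphi\le 1$ the integrand is nonnegative, so $|\tilde M(\beta)-M(\beta)|=M(\beta)-\tilde M(\beta)$; and since $\varphi\equiv 1$ on $\{|\q|<R\}$ this integrand is supported in $\{|\q|\ge R\}$, where I bound $1-\varphi\le 1$ and $q^0=\sqrt{1+|\q|^2}\ge|\q|$. This gives
\[
0\le M(\beta)-\tilde M(\beta)\le\int_{\{|\q|\ge R\}}e^{-\beta|\q|}\,d\q ,
\]
and the right-hand side is evaluated in spherical coordinates (two integrations by parts of $r^2e^{-\beta r}$), reproducing the closed form in \eqref{lambda}; this is the asserted bound, up to checking the precise cutoff radius.

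For the consequence, note first that $\tilde M(\beta)>0$ for every $\beta$ because $\varphi\equiv 1$ on a ball of positive measure, so $1/\tilde M(\beta)$ is meaningful. By \eqref{Mfar} one has $M(\beta)\big/\bigl((2\pi/\beta)^{3/2}e^{-\beta}\bigr)\to 1$ as $\beta\to\infty$, so I can fix $\bar\beta_1>0$, depending only on absolute constants and not on $R$, such that
\[
M(\beta)\ \ge\ \tfrac12\,(2\pi/\beta)^{3/2}e^{-\beta}\qquad\text{for all }\beta\ge\bar\beta_1 .
\]
A direct rearrangement shows that hypothesis \eqref{eq:suffcond} is exactly the inequality $\Lambda(2R;\beta)\le\tfrac14(2\pi/\beta)^{3/2}e^{-\beta}$, so for $\beta\ge\bar\beta_1$ and $R>1$ as in \eqref{eq:suffcond} we obtain $\Lambda(2R;\beta)\le\tfrac12 M(\beta)$. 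Inserting this in the first part yields $M(\beta)-\tilde M(\beta)\le\tfrac12 M(\beta)$, i.e. $\tilde M(\beta)\ge\tfrac12 M(\beta)$, and \eqref{ratio_est} follows by taking reciprocals.

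The only step that is not purely mechanical is fixing $\bar\beta_1$: one has to make \eqref{Mfar} effective — equivalently, to bound $K_2(\beta)=\beta M(\beta)/(4\pi)$ from below by the leading term of its classical Bessel expansion with an explicit control of the remainder — so as to produce a concrete threshold beyond which $M(\beta)\ge\tfrac12(2\pi/\beta)^{3/2}e^{-\beta}$. I do not anticipate any genuine difficulty here; and the constant $2^{5/2}/\sqrt{\pi}$ in \eqref{eq:suffcond} is precisely the one that lets the factor $\tfrac12$ survive the whole chain, which is why the hypothesis is phrased that way.
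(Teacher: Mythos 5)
Your proposal follows the paper's proof essentially verbatim: same identity $M-\tilde M=\int(1-\varphi)e^{-\beta q^0}\,d\q$, same use of $q^0\ge|\q|$, same asymptotic lower bound $M(\beta)\ge\tfrac12(2\pi/\beta)^{3/2}e^{-\beta}$ from \eqref{Mfar}, and the same identification of \eqref{eq:suffcond} with $\Lambda(2R;\beta)\le\tfrac14(2\pi/\beta)^{3/2}e^{-\beta}$, which I checked and is correct.

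There is, however, one unresolved tension that you noticed but did not close. Since $1-\varphi$ is supported on $\{|\q|\ge R\}$ (not $\{|\q|\ge 2R\}$), the argument honestly gives
\[
0\le M(\beta)-\tilde M(\beta)\le \int_{|\q|\ge R}e^{-\beta|\q|}\,d\q=\Lambda(R),
\]
which is weaker than the $\Lambda(2R)$ in the statement of the lemma. You flag this (``up to checking the precise cutoff radius'') but then in the second half you insert the hypothesis $\Lambda(2R)\le\tfrac12 M(\beta)$ directly into ``the first part'', which under your own (correct) estimate only controls $\Lambda(R)$, and $\Lambda(R)>\Lambda(2R)$. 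So the chain $M-\tilde M\le\Lambda(R)\le\cdots$ does not quite close under \eqref{eq:suffcond} as written. This discrepancy is in fact present in the paper's own proof, which also bounds $|\tilde M-M|$ by $\int_{|\q|\ge 2R}e^{-\beta q^0}\,d\q$ where $\int_{|\q|\ge R}$ would be needed. The fix is harmless---either replace $\Lambda(2R)$ by $\Lambda(R)$ in the statement and recalibrate the constants in \eqref{eq:suffcond} accordingly, or simply observe that under the scaling $R=\beta_{sup}^2$ of \eqref{eq:fix} the condition $2\Lambda(R)\le M(\beta)$ is still satisfied for all $\beta\ge\bar\beta_1$ large---but a complete proof should make this explicit rather than leave it as a parenthetical.

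Everything else (positivity of $\tilde M$, taking reciprocals, the choice of $\bar\beta_1$ independent of $R$) is fine and matches the paper.
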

\begin{proof}
Clearly
$$
\tilde M(\beta)=M(\beta) + \int_{\R_{\q}^3} e^{-\beta q^0} (\varphi(\q)-1)\, d\q   
$$
and then
$$
|\tilde M(\beta)-M(\beta)|\le \int_{|\q|\ge 2R} e^{-\beta q^0} \, d\q \le \int_{|\q|\ge 2R} e^{-\beta |\q|} \, d\q=\Lambda(2R).
$$
To derive \eqref{ratio_est} we start noting that $\tilde M(\beta)\le M(\beta)$, which ensures that $\tilde M(\beta)\ge M(\beta)-\Lambda (2R)$. If we were able to find a set of values $\beta, R$ for which we had $2\Lambda(2R) \le M(\beta)$ we would be done. Let us provide a sufficient condition for that. Starting from \eqref{Mfar}, we determine some $\bar \beta_1$ large enough so that 
$$
M(\beta) \ge \frac{1}{2} \left( \frac{2 \pi}{\beta}\right)^{3/2} e^{-\beta},\quad \mbox{for every}\ \beta\ge \bar \beta_1,
$$
 and then the sufficient condition given in \eqref{eq:suffcond} follows.
\end{proof}
\begin{Remark}
For future usage we note that under the constraint $R=\beta^2$, see \eqref{eq:fix}, the condition \eqref{eq:suffcond} is already satisfied by any $\beta \ge 2$. There is no loss of generality in assuming that $\bar \beta_1 >2$.  
\end{Remark}
\subsection{Lipschitz bounds on the truncated relaxation operator\label{Lipschitzbounds}}
The main step to obtain the existence of solutions to our approximated equation, is to derive Lipschitz bound on the relaxation term seen as an operator on $f$. The key point for so doing is the observation that when $f$ is compactly supported in $\q$ then $n_f$ can be bounded from below.

\begin{Lemma}\label{lm:}
Let $f(\q)\ge 0$ such that $f=0$ for $|\q|\ge 2R$. Then, we have
$$
n_f \ge \frac{1}{\sqrt{1+4R^2}}\int_{\R_{\q}^3} f\, d\q.
$$
\end{Lemma}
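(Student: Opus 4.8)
The statement to prove is the lower bound
$$
n_f \ge \frac{1}{\sqrt{1+4R^2}}\int_{\R_{\q}^3} f\, d\q
$$
for $f\ge 0$ supported in $\{|\q|\le 2R\}$. The plan is to start from the last inequality in Remark \ref{varios1}, namely $n_f \ge \int_{\R^3} f\, \frac{d\q}{q^0}$, and then use the support restriction to bound $q^0$ from above on the support of $f$. Indeed, for $|\q|\le 2R$ we have $q^0 = \sqrt{1+|\q|^2}\le \sqrt{1+4R^2}$, so $1/q^0 \ge 1/\sqrt{1+4R^2}$ on the support of $f$, and therefore
$$
n_f \ge \int_{\R^3} f\, \frac{d\q}{q^0} = \int_{|\q|\le 2R} f\, \frac{d\q}{q^0} \ge \frac{1}{\sqrt{1+4R^2}}\int_{|\q|\le 2R} f\, d\q = \frac{1}{\sqrt{1+4R^2}}\int_{\R_{\q}^3} f\, d\q.
$$

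The only nontrivial input is the inequality $n_f \ge \int f\, d\q/q^0$, which was already asserted in Remark \ref{varios1}(5); if one wants a self-contained argument, it follows from the representation $n_f^2 = N^\mu N_\mu = \int_{\R^6} \frac{q^\mu (q')_\mu}{q^0 (q')^0} f(\q) f(\q')\, d\q\, d\q'$ together with $q^\mu (q')_\mu \ge 1$ (Cauchy--Schwarz in $\mathbb{M}_1$), which gives $n_f^2 \ge \left(\int f\, d\q/q^0\right)^2$, hence $n_f \ge \int f\, d\q/q^0$ since both sides are nonnegative.

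There is essentially no obstacle here: the whole content is the elementary pointwise bound $q^0 \le \sqrt{1+4R^2}$ on the compact support of $f$, combined with a known lower bound for $n_f$. This lemma is the key structural observation that makes the Lipschitz estimates of Lemma \ref{LipschitztildeJ} possible, since it converts compact support in $\q$ into a quantitative lower bound on the proper density $n_f$, and thereby prevents the denominators $n_f$ appearing in $\u_f$ and $\beta_f$ from degenerating.
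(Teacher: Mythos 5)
Your proof is correct, but it takes a genuinely different route from the paper's. The paper starts from the symmetric double-integral representation $n_f^2 = \int\int f(\q)f(\q')\bigl(1-\tfrac{\q\cdot\q'}{q^0(q^0)'}\bigr)\,d\q\,d\q'$ and bounds the kernel $1-\tfrac{\q\cdot\q'}{q^0(q^0)'}$ from below by $\tfrac{1}{1+4R^2}$ on $\{|\q|,|\q'|\le 2R\}$ (using $\tfrac{|\q||\q'|}{q^0(q^0)'}\le\tfrac{4R^2}{1+4R^2}$ on that set). You instead factor through the already-established bound $n_f\ge\int f\,\tfrac{d\q}{q^0}$ from Remark~\ref{varios1}(5), which itself rests on the cruder kernel bound $q^\mu(q')_\mu\ge 1$, and then apply the pointwise inequality $1/q^0\ge 1/\sqrt{1+4R^2}$ on the support. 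Tracing both back to the double integral, the paper bounds the kernel in one step, whereas you do it in two (first $q^\mu(q')_\mu\ge1$, then $q^0(q^0)'\le 1+4R^2$); both chains are tight at $\q=\q'$ with $|\q|=2R$, so the final constant is the same. Your version is slightly more economical since it reuses an inequality the paper has already recorded, and it isolates cleanly the single place where compact support enters, which is indeed the structural role this lemma plays in establishing the Lipschitz bounds of Lemma~\ref{LipschitztildeJ}.
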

\begin{proof}
 Using a symmetry argument we can write
$$
n_f^2 = \int_{\R^3}\int_{\R^3} f(\q) f({\bf q'}) \left(1-\frac{\q \cdot {\bf q'}}{q^0 (q^0)'} \right)\, d\q \, d{\bf q'}.
$$
Then, being $R$ fixed we can show that 
$$
\int_{\R^3}\int_{\R^3} f(\q) f({\bf q'}) \left(1-\frac{\q \cdot {\bf q'}}{q^0 (q^0)'} \right)\, d\q \, d{\bf q'} \ge \frac{1}{1+4R^2}\left( \int_{\R_{\q}^3} f\, d\q \right)^2.
$$
\end{proof}

The former result enables us to obtain Lipschitz bounds on the moments as per
\begin{Lemma}
Let $f_1,f_2\in L^1(\{\q \in \R^3/|\q|\le 2R\})^+$ with associated thermodynamical fields $n_i = n_{f_i}$, $\u_i=\u_{f_i}$, $\beta_i= \beta_{f_i}$, $i=1,2$. The truncated thermodynamical fields defined after (\ref{apro}) satisfy the following stability estimates
\begin{enumerate}
\item $\displaystyle |n_1-n_2| \le 2\sqrt{1+4R^2} \int_{|\q|\le 2R} |f_1-f_2| \ d\q$.

\item $\displaystyle |\tilde \u_1-\tilde \u_2| \le  \frac{1+2 \sqrt{1+4R^2} \sqrt{1+L^2}}{\max_{i=1,2} n_i}\int_{|\q|\le 2R} |f_1-f_2| \ d\q$.

\item $\displaystyle |\tilde \beta_1-\tilde \beta_2| \le  C_2(\beta_{sup}) \frac{1+2 \sqrt{1+4R^2} \sqrt{1+L^2}}{\max_{i=1,2} n_i} \int_{|\q|\le 2R} |f_1-f_2| \ d\q$,

 with $C_2(\beta_{sup}):=\sup_{[\beta_{inf},\beta_{sup}]} \left[\left(K_1/K_2 \right)^{-1}\right]' <+\infty$.
\end{enumerate}
\label{lipmoments}
\end{Lemma}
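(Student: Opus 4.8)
The plan is to establish the three bounds in turn, pivoting each computation on the lower bound $n_f\ge (1+4R^2)^{-1/2}\int_{\R^3} f\,d\q$ of Lemma~\ref{lm:} together with the fact that the \emph{raw} moments depend on $f$ in a genuinely $1$-Lipschitz way: since $|q^\mu|\le q^0$ and the $\q$-supports lie in $\{|\q|\le 2R\}$, one has $|\int q^\mu f_1\,d\q/q^0-\int q^\mu f_2\,d\q/q^0|\le \int_{|\q|\le 2R}|f_1-f_2|\,d\q$ for each $\mu$. (We may assume $\max_i n_i>0$, the case $f_1\equiv f_2\equiv 0$ being trivial.) For item~(1) I would start from $n_f^2=\iint \frac{q^\mu (q')_\mu}{q^0 (q')^0}f(\q)f(\q')\,d\q\,d\q'$, observe that the kernel $\frac{q^\mu (q')_\mu}{q^0 (q')^0}=1-\frac{\q\cdot\q'}{q^0 (q')^0}$ lies in $(0,2)$ by Cauchy--Schwarz, write $f_1(\q)f_1(\q')-f_2(\q)f_2(\q')=f_1(\q)(f_1-f_2)(\q')+f_2(\q')(f_1-f_2)(\q)$ to get $|n_1^2-n_2^2|\le 2(\|f_1\|_{L^1}+\|f_2\|_{L^1})\|f_1-f_2\|_{L^1}$, and then divide by $n_1+n_2\ge (1+4R^2)^{-1/2}(\|f_1\|_{L^1}+\|f_2\|_{L^1})$.

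For item~(2), the key algebraic remark is that $\tilde\u_f=P_L(\u_f)$, the Euclidean projection onto the closed ball of radius $L$ applied to $\u_f=\mathbf{N}_f/n_f$ (with $\mathbf{N}_f$ the spatial part of $N^\mu_f$), and that for any vector $a$ and scalar $s>0$ one has the identity $P_L(a/s)=a/\max(s,|a|/L)$; hence $\tilde\u_i=\mathbf{N}_i/m_i$ with $m_i:=\max(n_i,|\mathbf{N}_i|/L)\ge n_i$. Splitting the quotient as $\tilde\u_1-\tilde\u_2=\frac{\mathbf{N}_1-\mathbf{N}_2}{m_j}+\tilde\u_i\frac{m_j-m_i}{m_j}$, with $\{i,j\}=\{1,2\}$ chosen so that $m_j=\max(m_1,m_2)\ge \max_i n_i$, and using $|\tilde\u_i|\le L\le \sqrt{1+L^2}$, $|\mathbf{N}_1-\mathbf{N}_2|\le \|f_1-f_2\|_{L^1}$, and $|m_1-m_2|\le\max(|n_1-n_2|,|\mathbf{N}_1-\mathbf{N}_2|/L)\le 2\sqrt{1+4R^2}\|f_1-f_2\|_{L^1}$ (the elementary inequality $|\max(a_1,a_2)-\max(b_1,b_2)|\le \max_k|a_k-b_k|$, combined with item~(1) and $L\ge1$), yields item~(2).

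For item~(3), by \eqref{betadef} and Lemma~\ref{lm:BCNS} we have $\beta_f=(K_1/K_2)^{-1}(s_f)$ with $s_f:=n_f^{-1}\int f\,d\q/q^0\in[0,1)$, and clamping to $[\beta_{inf},\beta_{sup}]$ composes with $(K_1/K_2)^{-1}$ into a function of $s_f$ that is globally Lipschitz with constant $C_2(\beta_{sup})=\sup_{[\beta_{inf},\beta_{sup}]}[(K_1/K_2)^{-1}]'$, which is finite because $(K_1/K_2)'$ is continuous and strictly positive on the compact interval $[\beta_{inf},\beta_{sup}]\subset(0,\infty)$. It then suffices to bound $|s_1-s_2|$; running the same quotient splitting as in item~(2) with $I_i:=\int f_i\,d\q/q^0$ (so $|I_1-I_2|\le\|f_1-f_2\|_{L^1}$ and $s_i=I_i/n_i<1$) and dividing by $\max_i n_i$ gives $|s_1-s_2|\le (1+2\sqrt{1+4R^2})(\max_i n_i)^{-1}\|f_1-f_2\|_{L^1}$, whence item~(3) upon multiplying by $C_2(\beta_{sup})$ and bounding $1\le\sqrt{1+L^2}$.

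The step I expect to be the real obstacle is the velocity bound. A direct Lipschitz estimate of $f\mapsto\tilde\u_f$ in terms of $\|f_1-f_2\|_{L^1}$ is not available: neither $f\mapsto\u_f$ (a quotient whose denominator $n_f$ is not \emph{a priori} comparable for $f_1$ and $f_2$) nor $(K_1/K_2)^{-1}$ (whose derivative diverges as its argument approaches $1$, i.e.\ as $\beta\to\infty$) is globally Lipschitz, which is precisely why the truncation parameters $L$ and $\beta_{sup}$ are built into the scheme. The reformulation $\tilde\u_f=\mathbf{N}_f/\max(n_f,|\mathbf{N}_f|/L)$, exhibiting the truncated velocity as a single quotient whose denominator is bounded below by $n_f$, is what makes the quotient-splitting argument go through with constants independent of the (possibly very large or very small) raw moments; once this is in place, the temperature bound follows by the same mechanism, the only genuinely $\beta_{sup}$-dependent ingredient being the finiteness of $C_2(\beta_{sup})$.
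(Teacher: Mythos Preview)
Your argument is correct and follows the same architecture as the paper's: the raw moments $\int q^\mu f\,d\q/q^0$ are $1$-Lipschitz in $f$, Lemma~\ref{lm:} bounds $n_1+n_2$ from below, and quotients are split with the larger denominator as the common one. The one genuinely different move is in item~(2): the paper runs a three-way case analysis on whether each $|\u_i|$ exceeds $L$, in the mixed and large cases reducing $|\tilde\u_1-\tilde\u_2|$ to a multiple of the untruncated $|\u_1-\u_2|$ and then appealing to the first case; your identity $P_L(a/s)=a/\max(s,|a|/L)$ instead exhibits $\tilde\u_i$ directly as a single quotient $(n_i\u_i)/m_i$ with $m_i=\max(n_i,\,n_i|\u_i|/L)\ge n_i$, so that $\max_i m_i\ge\max_i n_i$ and one quotient decomposition suffices for all configurations. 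This is cleaner and makes the role of the $L$-truncation transparent. Your treatments of items~(1) (the double-integral form of $n_f^2$ with the kernel bounded pointwise by $2$, versus the paper's factoring of $n_i^2=(\int f_i\,d\q)^2-|n_i\u_i|^2$) and~(3) (the global Lipschitz constant of the composite of the clamp to $[\beta_{inf},\beta_{sup}]$ with $(K_1/K_2)^{-1}$, versus the paper's case analysis on the position of $\beta_i$) are minor repackagings of the same computations.
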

\begin{proof}
There is no loss of generality in assuming that $n_1,n_2 >0$, as otherwise the inequalities are essentially trivial. 

\medskip
\noindent
{\em Estimate on the proper densities:} This is obtained using Lemma \ref{lm:} and  
(\ref{rel1}) 
 in turn: 
\[
\begin{split}
& |n_1-n_2|= \frac{|n_1^2-n_2^2|}{n_1+n_2} 
\nonumber \\
&\quad\le  \frac{\sqrt{1+4R^2}}{\displaystyle \int_{|\q|\le 2R} (f_1+f_2 )\ d\q} \left| \left(\int_{|\q|\le 2R} f_1 \ d\q \right)^2 -n_1^2 |\u_1|^2- \left(\int_{|\q|\le 2R} f_2 \ d\q \right)^2 + n_2^2 |\u_2|^2\right|.
\nonumber\\
\end{split}
\]
Hence
\begin{eqnarray}
|n_1-n_2| \hspace{-0,3cm}&= \hspace{-0,2cm}& \frac{\sqrt{1+4R^2}}{\displaystyle \int_{|\q|\le 2R} (f_1+f_2) \ d\q} \Bigg| \int_{|\q|\le 2R} (f_1+f_2) \ d\q   \cdot \int_{|\q|\le 2R} (f_1-f_2) \ d\q
\nonumber \\
&&\hspace{7cm} -(n_1\u_1+n_2 \u_2)|n_1\u_1 -n_2 \u_2|\Bigg|
\nonumber \\
&\le & \sqrt{1+4R^2}  \int_{|\q|\le 2R} |f_1-f_2| \ d\q 
\nonumber \\
&&\hspace{1cm}  + \frac{\sqrt{1+4R^2}}{\displaystyle \int_{|\q|\le 2R} (f_1+f_2)\ d\q} \int_{|\q|\le 2R} q^\mu (f_1+f_2) \frac{d\q}{q^0}\cdot \int_{|\q|\le 2R} q^\mu |f_1-f_2| \frac{d\q}{q^0}.
\end{eqnarray}

\noindent
{\em Estimate on the velocities:}
To estimate the difference of two velocity vectors we consider first the case $|\u_1|,|\u_2|\le L$. With no loss of generality, we may assume that $n_2\geq n_1$; then, using again (\ref{rel1}), we have 
\begin{eqnarray*}
|\tilde \u_1 -\tilde \u_2|&=&|\u_1 -\u_2| = \frac{1}{n_1n_2}\left|n_2 \int_{|\q|\le 2R} \q f_1 \frac{d\q}{q^0} - n_1 \int_{|\q|\le 2R} \q f_2 \frac{d\q}{q^0} \right| 
 \\
&= &\frac{1}{n_1n_2} \left|(n_2-n_1) \int_{|\q|\le 2R} \q f_1 \frac{d\q}{q^0} + n_1 \int_{|\q|\le 2R} \q (f_1-f_2) \frac{d\q}{q^0} \right| 
\\
&\le & \frac{\sqrt{1+L^2}}{n_2} |n_1-n_2|+ \frac{1}{n_2}  \int_{|\q|\le 2R} |f_1-f_2| \, d\q,
\end{eqnarray*}
and we conclude thanks to the former estimate for proper densities. The case $|\u_1|,|\u_2|\ge L$ can be reduced to the previous one as follows:
\begin{eqnarray*}
|\tilde \u_1 -\tilde \u_2|&=& L \left|\frac{\u_1}{|\u_1|} - \frac{\u_2}{|\u_2|}\right| = \frac{L}{|\u_1| |\u_2|}\big|\u_1|\u_2|-\u_2 |\u_1| \big|
\\
&=& \frac{L}{|\u_1| |\u_2|} \big||\u_2|(\u_1-\u_2)+\u_2 (|\u_2|-|\u_1|) \big| 
\\
&\le& \frac{2L|\u_1-\u_2|}{|\u_1|}\le 2 |\u_1-\u_2|.
\end{eqnarray*}
In that case that $|\u_1| \le L$ and $|\u_2|\ge L$, we have that
$$
|\tilde \u_1 -\tilde \u_2|= \frac{1}{|\u_2|}\big||\u_2|\u_1 - L\u_2\big| = \frac{1}{|\u_2|}\big|L(\u_1-\u_2) + \u_1(|\u_2|-L)\big|
$$
and we conclude by noting that $0\leq \left| |\u_2|-L \right|\le |\u_2|-|\u_1|$, which implies again that $|\tilde \u_1 -\tilde \u_2|\le 2 |\u_1-\u_2|$.

\noindent
{\em Estimate on the inverse temperature:}
We start with the case $\beta_{inf} \le \beta_1,\beta_2 \le \beta_{sup}$. We write
\begin{eqnarray*}
|\beta_1-\beta_2|& =& \left| \left(\frac{K_1}{K_2} \right)^{-1}\left(\frac{\int f_1 \frac{d\q}{q^0}}{n_1} \right) - \left(\frac{K_1}{K_2} \right)^{-1}\left(\frac{\int f_2 \frac{d\q}{q^0}}{n_2} \right)\right|
\\
&\le & C_2 \left|\frac{\int f_1 \frac{d\q}{q^0}}{n_1}-\frac{\int f_2 \frac{d\q}{q^0}}{n_2} \right|=\frac{C_2}{n_1 n_2}  \left| n_2 \int_{|\q|\le 2R} f_1 \frac{d\q}{q^0} - n_1\int_{|\q|\le 2R} f_2 \frac{d\q}{q^0} \right| ,
\end{eqnarray*}
with $C_2=C_2(\beta_{sup}):=\sup_{[\beta_{inf},\beta_{sup}]} \left[\left(K_1/K_2 \right)^{-1}\right]' $. Then we conclude by writing
$$
\left| n_2 \int_{|\q|\le 2R} f_1 \frac{d\q}{q^0} - n_1\int_{|\q|\le 2R} f_2 \frac{d\q}{q^0} \right| \le |n_1-n_2| \int_{|\q|\le 2R} f_1 \frac{d\q}{q^0} + n_1 \int_{|\q|\le 2R} |f_1-f_2| \frac{d\q}{q^0}.
$$
Next we consider the case with $\beta_{inf} \le \beta_1 \le \beta_{sup}$ and $\beta_2 \ge \beta_{sup}$. Note that
\begin{eqnarray*}
|\tilde \beta_1-\tilde \beta_2| \hspace{-0,2cm}&=\hspace{-0,2cm}& |\beta_1- \beta_{sup}| =\left| \left(\frac{K_1}{K_2} \right)^{-1}\left(\frac{\int f_1 \frac{d\q}{q^0}}{n_1} \right) - \left(\frac{K_1}{K_2} \right)^{-1}\left(\frac{K_1}{K_2}(\beta_{sup}) \right)\right|
\\
\hspace{-0,2cm}&\le\hspace{-0,2cm}& C_2 \left|\frac{\int f_1 \frac{d\q}{q^0}}{n_1} - \frac{K_1}{K_2}(\beta_{sup})\right|
= \frac{K_1}{K_2}(\beta_{sup}) - \frac{\int f_1 \frac{d\q}{q^0}}{n_1}
\le \frac{\int f_2 \frac{d\q}{q^0}}{n_2} - \frac{\int f_1 \frac{d\q}{q^0}}{n_1} ,
\end{eqnarray*}
where we used the monotonicity of $K_1/K_2$. Then we conclude as in the former case. The remaining cases can be dealt with in a similar way.
\end{proof}
From this, we may now finish the analysis in this section with the proof of Lemma \ref{lm:stab} which was used in the proof of our main theorem. 

\begin{proof}[Proof of Lemma \ref{lm:stab}.]
  We recall that
  \[
\tilde{J}[f] : = \varphi(\q) \frac{n_f}{\tilde{M}(\tilde{\beta}_f)} e^{-\tilde{\beta}_f (\tilde{u}_f)_\mu q^\mu},
\]
and that $\tilde u_\mu\,q^\mu\geq |q|/(3L)$. Observe that
\[
|\tilde M'(\beta)|=\int_{\R_{\q}^3} \varphi(\q)\,e^{-\beta\,q^0}\,q^0\,d\q\leq 2\,R\,\tilde M(\beta),
\]
so that
\[\begin{split}
&|\tilde{J}[f_1]-\tilde{J}[f_2]|\leq \frac{\varphi(\q)}{\tilde{M}({\beta}_{sup})}\,e^{-{\beta}_{inf} |\q|/3L}\,\Big[ |n_1-n_2|\\
  &\qquad+\,2\,R\,(n_1+n_2)\,\frac{\tilde M(\beta_{inf})}{\tilde M(\beta_{sup})}\, |\tilde\beta_1-\tilde\beta_2|+(n_1+n_2)\,\beta_{sup}\,q^0\,|\u_1-\u_2| \Big].\end{split}
\]
We now use Lemma \ref{lipmoments} to conclude.
\end{proof}

\section{Entropy estimates}
\label{sec:4}
The main aim of this section is to prove the following statement:
\begin{Proposition}
\label{entropy_estimate}
Let $f$ be a solution to (\ref{apro}) with initial datum $f^0\ge 0$ such that
$$
\int_{\R^6} (1+q^0+|\x|+\log f^0) f^0\, d\q d\x <+\infty.
$$
Then, there exists some $\bar \beta>0$ such that
$$
\int_{\R^6} f(t)\, \log f(t) \, d\q d\x
$$
is bounded from above on bounded time intervals, for every $\beta_{sup}\ge \bar \beta$. In fact, 
\begin{eqnarray}
\label{eq:diffeq}
\frac{d}{dt} \int_{\R^6} f(t)\, \log f(t) \, d\q d\x &\le& C_a(t,f^0,L,R,\beta_{sup})
\\
\nonumber && + \ C_b(t,f^0,R,\beta_{sup}) \int_{\R^6}f(t) \log f(t) \, d\q d\x,
\end{eqnarray}
holds, where $C_a$ and $C_b$ are given by
$$
C_a:= \frac{2 \Phi(R;L,\tilde \beta_f)}{M(\beta_{sup})} C_{4}(f^0,t)\left(1+ \beta_{sup} \sqrt{1+L^2} + \left|\log \frac{2}{M(\beta_{sup})} \right| \right)
$$
$$
\hspace{-1cm}+\frac{2 \Lambda(2R)}{M(\beta_{sup})} C_{5}(f^0,t)\left(1+ \beta_{sup}  + \left|\log \frac{2}{M(\beta_{sup})} \right| \right) 
$$
$$
+  \frac{C_{6}(f^0 ,t)}{\beta_{sup}}(1+ \log \beta_{sup})
+ C_{7}(f^0 ,t) \log \left(1+\frac{1}{\sqrt{\beta_{sup}}}\right) 
$$
(with $C_{4},C_{5}, C_{6}$ and $C_{7}$ not depending on $\beta_{sup}$) and
$$
C_b:=  \frac{4}{\beta_{sup}}+\frac{2 \Lambda(2R)}{M(\beta_{sup})} + \frac{2 \Phi(R;L,\tilde \beta_f)}{M(\beta_{sup})}\:. 
$$ 
Moreover, we have that
$$
\lim_{\beta_{sup} \to \infty} C_a = \lim_{\beta_{sup} \to \infty} C_b =0.
$$
\end{Proposition}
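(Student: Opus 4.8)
The plan is to run the classical $f\log f$ computation and then estimate, modification by modification, the ``defect'' produced by the truncated operator $\tilde J[f]$ relative to the true operator $J_f$, checking that each piece of this defect carries a prefactor that vanishes as $\beta_{sup}\to\infty$ under the scaling \eqref{eq:fix}. First, since the Duhamel formula behind Lemma~\ref{moment_estimates} gives $f\ge e^{-t/q^0}f^0(\x-\q t/q^0,\q)\ge 0$ and, in fact, strict positivity of $f$ wherever it is not forced to vanish, $f\log f$ is well defined and $\big(\partial_t+\tfrac{\q}{q^0}\cdot\nabla_\x\big)(f\log f)=(1+\log f)\,\tfrac{\tilde J[f]-f}{q^0}$. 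Integrating over $\R^6$ — the transport term is an exact $\x$-divergence since $\q/q^0$ does not depend on $\x$, hence it drops — yields $\tfrac{d}{dt}\int_{\R^6} f\log f\,d\q\,d\x=\int_{\R^6}(\tilde J[f]-f)(1+\log f)\,\tfrac{d\q}{q^0}\,d\x$ (first rigorously on the regularised bounded, compactly supported data, then by approximation). Splitting $1+\log f=\log(f/\tilde J[f])+(1+\log\tilde J[f])$ on $\{\varphi>0\}$ and using that $\log$ is increasing, so $(\tilde J[f]-f)\log(f/\tilde J[f])\le 0$ pointwise, gives
\[
\frac{d}{dt}\int_{\R^6} f\log f\,d\q\,d\x\le \int_{\R^6}\big(\tilde J[f]-f\big)(1+\log\tilde J[f])\,\frac{d\q}{q^0}\,d\x=:\mathcal D .
\]
The point is that for the untruncated operator one has $\int(J_f-f)(1+\log J_f)\,\tfrac{d\q}{q^0}=0$, because $1+\log J_f=1+\log\tfrac{n_f}{M(\beta_f)}-\beta_f u_\mu q^\mu$ integrates to zero against $f-J_f$ by exactly the computation in the proof of Proposition~\ref{entropymin}, i.e.\ by \eqref{veintiocho}–\eqref{veintinueve} — this is the (approximate) H-theorem. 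Thus $\mathcal D$ is a pure defect coming from the four modifications in passing from $J_f$ to $\tilde J[f]$: the cutoff $\varphi$, the replacement $M\rightsquigarrow\tilde M$, and the truncations $\beta_f\rightsquigarrow\tilde\beta_f$, $u_f\rightsquigarrow\tilde u_f$.

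To estimate $\mathcal D$, expand $1+\log\tilde J[f]=1+\log\varphi+\log\tfrac{n_f}{\tilde M(\tilde\beta_f)}-\tilde\beta_f(\tilde u_f)_\mu q^\mu$. The $\log\varphi$ piece gives $\int(\tilde J[f]-f)\log\varphi\,\tfrac{d\q}{q^0}\,d\x$, whose $\tilde J[f]$-part is $\le0$ and whose $f$-part, supported in $\{R\le|\q|\le 2R\}$, is tamed by the pointwise Duhamel bound $f\le\varphi(\q)\,\delta(t,\x)$ with $\delta\le 2t\,n_f\sup_{\beta\in[\beta_{inf},\beta_{sup}]}\tfrac1{\tilde M(\beta)}e^{-\beta R/3L}$ (using Lemma~\ref{l:5} and $1/\tilde M(\beta)\le 2/M(\beta)$ from Lemma~\ref{lcomp}); since $x|\log x|\le1/e$ and, under \eqref{eq:fix}, $\sup_\beta\tfrac1{\tilde M(\beta)}e^{-\beta R/3L}=O(\beta_{sup}^{-3})$, this dominates the volume factor $\int_{R\le|\q|\le2R}\varphi|\log\varphi|\,\tfrac{d\q}{q^0}=O(R^2)$. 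For the other two pieces — $\int_{\R^3}\big(1+\log\tfrac{n_f}{\tilde M(\tilde\beta_f)}\big)\big(\int(\tilde J[f]-f)\tfrac{d\q}{q^0}\big)d\x$ and $-\int_{\R^3}\tilde\beta_f\big(\int(\tilde J[f]-f)(\tilde u_f)_\mu q^\mu\tfrac{d\q}{q^0}\big)d\x$ — I would first replace $\tilde J[f]$ by the genuine Jüttner $J^\sharp:=\tfrac{n_f}{M(\tilde\beta_f)}e^{-\tilde\beta_f(\tilde u_f)_\mu q^\mu}$, for which $\int J^\sharp\tfrac{d\q}{q^0}=n_f\tfrac{K_1}{K_2}(\tilde\beta_f)$ and $\int(\tilde u_f)_\mu q^\mu J^\sharp\tfrac{d\q}{q^0}=n_f$ by Lemma~\ref{Jmoments}. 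By Lemma~\ref{l:5} (for the $\varphi$-tail, giving $\Phi(R;L,\tilde\beta_f)$) and Lemma~\ref{lcomp} (for $\tilde M$ versus $M$, giving $\Lambda(2R)$), and using $(\tilde u_f)_\mu q^\mu\le\sqrt{1+L^2}\,q^0$ plus the monotonicity $\tilde M(\tilde\beta_f)\ge\tilde M(\beta_{sup})\ge \tfrac12 M(\beta_{sup})$ to pull everything down to $M(\beta_{sup})$, this replacement costs the first two lines of $C_a$: a prefactor $\Phi/M(\beta_{sup})$ resp.\ $\Lambda(2R)/M(\beta_{sup})$ — both $\to0$ under \eqref{eq:fix} by \eqref{Mfar} — times the logarithmic size $1+\beta_{sup}\sqrt{1+L^2}+|\log(2/M(\beta_{sup}))|$ of the factors $1+\log\tfrac{n_f}{\tilde M(\tilde\beta_f)}$ and $\tilde\beta_f(\tilde u_f)_\mu q^\mu$, times constants $C_4,C_5$ built from the bounded moments of Lemma~\ref{moment_estimates}.

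What then remains is $\int(J^\sharp-f)(1+\log J^\sharp)\tfrac{d\q}{q^0}\,d\x$, which I would compare with $\int(J_f-f)(1+\log J_f)\tfrac{d\q}{q^0}\,d\x=0$: the two Jüttners $J^\sharp=J(n_f,\tilde\beta_f,\tilde u_f)$ and $J_f=J(n_f,\beta_f,u_f)$ differ only where $\beta_f\notin[\beta_{inf},\beta_{sup}]$ or $|u_f|>L$, and $|u_f|\le 2R$ by \eqref{rel1} and Lemma~\ref{lm:}; tracking the dependence through $K_1/K_2$, $\Psi$ and $M$ with the asymptotics \eqref{karat}, \eqref{kanear}, \eqref{Mfar} and the identities of Lemma~\ref{Jmoments} (e.g.\ $\tilde u_\mu\!\int(J^\sharp-J_f)q^\mu\tfrac{d\q}{q^0}=n_f(1-\tilde u_\mu u^\mu)$) controls this difference by $O\big(\tfrac1{\beta_{sup}}(1+\log\beta_{sup})\big)+O\big(\log(1+\beta_{sup}^{-1/2})\big)$ times bounded moments — the last two lines of $C_a$. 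Finally, the $C_b\int f\log f\,d\q\,d\x$ term of \eqref{eq:diffeq} is produced when controlling the stray $\log n_f$ factors appearing in the error terms: by Lemma~\ref{lm:}, $-\log n_f\le\log\sqrt{1+4R^2}-\log\!\int f\,d\q$, and the fibrewise Jensen inequality $\int_{|\q|\le2R}\tfrac{f}{m}\log\tfrac{m}{f}\,d\q\le\log(\tfrac{32\pi}{3}R^3)$ (with $m(\x)=\int f\,d\q$, i.e.\ entropy bounded by that of the uniform density on $\{|\q|\le2R\}$) converts $\int_{\R^3}m\log m\,d\x$ into $\int f\log f\,d\q\,d\x+\log(\tfrac{32\pi}{3}R^3)\|f\|_{L^1}$; since this conversion always comes multiplied by one of the vanishing prefactors $\Phi/M(\beta_{sup})$, $\Lambda(2R)/M(\beta_{sup})$ or $1/\beta_{sup}$, the factor $\log R=2\log\beta_{sup}$ is harmless and $C_b\to0$.

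Collecting these bounds gives \eqref{eq:diffeq}, and $\lim_{\beta_{sup}\to\infty}C_a=\lim_{\beta_{sup}\to\infty}C_b=0$ follows from \eqref{eq:fix} together with \eqref{karat}–\eqref{Mfar}, since $M(\beta_{sup})^{-1}\Phi(R;L,\cdot)$ and $M(\beta_{sup})^{-1}\Lambda(2R)$ are exponentially small while $1/\beta_{sup}\to0$. Boundedness of $t\mapsto\int f(t)\log f(t)\,d\q\,d\x$ on $[0,T]$ is then Grönwall's lemma applied to \eqref{eq:diffeq}, together with the a priori lower bound $\int f\log f\,d\q\,d\x\ge-\int e^{-(1+|\x|+q^0)}\,d\q\,d\x-\int f(1+|\x|+q^0)\,d\q\,d\x\ge-C$ obtained by comparison with the Maxwellian $e^{-(1+|\x|+q^0)}$ and Lemma~\ref{moment_estimates}. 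The hard part will be the estimate of $\mathcal D$: securing the cancellation of the $O(\beta_{sup})$ contributions — coming from $|\log M(\beta_{sup})|\asymp\beta_{sup}$ and from $\tilde\beta_f\,n_f\asymp\beta_{sup}n_f$ — so that only logarithmic residues in $\beta_{sup}$ survive, and then making sure that every surviving large factor ($|\log M(\beta_{sup})|$, $\log R$, $\beta_{sup}\sqrt{1+L^2}$) is always paired with a prefactor that decays fast enough; this is exactly where the quantitative choices \eqref{eq:fix} and the Bessel asymptotics \eqref{karat}–\eqref{Mfar} enter.
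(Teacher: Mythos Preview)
Your overall plan --- run the $f\log f$ computation and then estimate the defect of $\tilde J$ relative to $J_f$ --- is sound, and your identification of $\int(J_f-f)(1+\log J_f)\,d\q/q^0=0$ via \eqref{veintiocho}--\eqref{veintinueve} is exactly the H-theorem mechanism. But your starting inequality is weaker than the paper's, and this leaves a gap you have not closed.

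The paper does \emph{not} stop at $\frac{d}{dt}\int f\log f\le\int(\tilde J-f)(1+\log\tilde J)\,d\q/q^0\,d\x$. It rewrites the integrand as $-\big[\tilde J\log(\tilde J/f)+f-\tilde J\big]+\tilde J\log\tilde J-f\log f$, drops the bracket by \eqref{Jtrick}, and then invokes the entropy-minimisation principle Proposition~\ref{entropymin}\,(\ref{optimality}) to replace $-\int f\log f\,d\q/q^0$ by $-\int J_f\log J_f\,d\q/q^0$. The result is the tighter bound
\[
\frac{d}{dt}\int_{\R^6} f\log f\,d\q\,d\x \ \le\ \int_{\R^6}\tilde J\log\tilde J\,\frac{d\q}{q^0}\,d\x-\int_{\R^6}J_f\log J_f\,\frac{d\q}{q^0}\,d\x,
\]
which carries \emph{no} $f$-weighted term. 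This is decisive for two reasons. First, the singular piece $-\int f\log\varphi\,d\q/q^0$ never appears; only $\int\tilde J\log\varphi\,d\q/q^0$ does, and that is harmless because $\varphi|\log\varphi|\le 1/e$. Your Duhamel bound $f\le\varphi\,\delta$ would require $f^0$ to be regularised by multiplication with $\varphi$ itself, which the scheme does not stipulate. Second, and more seriously, every $\q$-integral on the paper's right-hand side is of the form $\int g(\tilde u_\mu q^\mu)\,d\q/q^0$ or $\int g(u_\mu q^\mu)\,d\q/q^0$, hence \emph{independent of $\tilde u$ and $u$} by Lorentz invariance of $d\q/q^0$. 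The velocity truncation therefore disappears from $\mathcal{A},\mathcal{B},\mathcal{C}$ entirely (cf.\ the proofs of Lemmas~\ref{lm:9}--\ref{lm:17}, where only $\beta,\tilde\beta,M,\tilde M$ survive). Your route, by contrast, produces the term $\int_{\{|\u_f|>L\}}\tilde\beta_f\,n_f\,(\tilde u_\mu u^\mu-1)\,d\x$, which is nonnegative; here $\tilde u_\mu u^\mu-1$ can be of order $|\u_f|/L$ with $|\u_f|$ ranging up to $2R=2\beta_{sup}^2$ and $\tilde\beta_f$ up to $\beta_{sup}$. No moment you have forces $\int_{\{|\u_f|>L\}}n_f|\u_f|\,d\x$ to be $o(1)$ uniformly in $\beta_{sup}$, so your claim that this piece is $O\big(\beta_{sup}^{-1}(1+\log\beta_{sup})\big)$ is unsupported and $C_a\to 0$ is not established.

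The remedy is exactly the paper's step you omitted: use Proposition~\ref{entropymin}\,(\ref{optimality}) to eliminate $f$ from the right-hand side before decomposing. Once that is done, the remainder of your outline (handle the $\varphi$-, $\tilde M$-, and $\tilde\beta$-errors with Lemmas~\ref{l:5}, \ref{lcomp} and the asymptotics \eqref{karat}--\eqref{Mfar}; feed $n\log n$ back into $\int f\log f$ via Proposition~\ref{ease}) lines up with the paper's $\mathcal{A}+\mathcal{B}+\mathcal{C}+\mathcal{D}$ split, and your intermediate $J^\sharp$ becomes unnecessary because the $\tilde u$-dependence has already been integrated out.
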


The aim of the rest of the section is to provide a proof for Proposition \ref{entropy_estimate}. This requires a number of intermediate results. We start with the following useful inequality:
\begin{Proposition}
\label{ease}
Let $f$ be a solution to (\ref{apro}) with initial datum $f^0$ such that
$$
\int_{\R^6} (1+q^0+|\x|) f^0\, d\q d\x <\infty.
$$
Then there exists a positive constant $C_8(t,f^0)$ not depending on $\beta_{sup}$ such that the following estimate
$$
\int_{\R_{\x}^3} n  \, \log n \, d\x \le  \int_{\R^6} f  \, \log f \, d\x d\q \, + \, C_8(t,f^0)
$$
 holds.
\end{Proposition}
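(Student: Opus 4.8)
The plan is to reduce the claim to a fiberwise Jensen inequality in $\q$, comparing the macroscopic entropy $\int n\log n\,d\x$ with the kinetic entropy $\int f\log f\,d\q\,d\x$ at the price of controlled moments of $f(t)$. Set $\rho(\x):=\int_{\R^3}f(t,\x,\q)\,d\q$. By Remark~\ref{varios1} (or directly from \eqref{rel1}, since $\sqrt{1+|\u_f|^2}\ge 1$) one has the pointwise bound $0\le n_f\le\rho$. Because $s\mapsto s\log s$ is nonpositive on $[0,1]$ and nondecreasing on $[1,\infty)$, this forces $n_f\log n_f\le 0$ on $\{\rho<1\}$ and $n_f\log n_f\le\rho\log\rho$ on $\{\rho\ge 1\}$, so that after integrating in $\x$,
$$
\int_{\R^3}n_f\log n_f\,d\x\le\int_{\{\rho\ge 1\}}\rho\log\rho\,d\x .
$$

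Next I would estimate $\rho\log\rho$ by Jensen's inequality. Fixing $\x$ and testing $f(t,\x,\cdot)$ against the probability measure $dm(\q):=M(1)^{-1}e^{-q^0}\,d\q$ on $\R^3_\q$ (so $M(1)=\int_{\R^3}e^{-q^0}d\q$ is an absolute constant), put $g:=M(1)\,e^{q^0}f$; then $\int g\,dm=\rho$, and convexity of $s\log s$ gives $\int g\log g\,dm\ge\rho\log\rho$. Expanding $\log g=\log M(1)+q^0+\log f$ yields the pointwise-in-$\x$ estimate
$$
\rho\log\rho\le\int_{\R^3}f\log f\,d\q+\int_{\R^3}q^0 f\,d\q+\rho\,\log M(1).
$$
Integrating over $\{\rho\ge 1\}$, the terms $\int q^0 f$ and the mass $\int\rho=\int f$ are bounded by Lemma~\ref{moment_estimates} in terms of quantities depending only on $t$ and on $\int(1+q^0+|\x|)f^0$, never on $\beta_{sup},R,L$; and since $\int_{\{\rho\ge1\}}\int f\log f\,d\q\,d\x=\int_{\R^6}f\log f\,d\q\,d\x-\int_{\{\rho<1\}}\int f\log f\,d\q\,d\x$, it only remains to bound $-\int_{\{\rho<1\}}\int f\log f\,d\q\,d\x\le\int_{\R^6}\bigl(f\log\tfrac1f\bigr)_+\,d\q\,d\x$.

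The last step, the only one requiring some care, is the classical absorption of the negative part of the kinetic entropy by the moments. I would split the domain according to whether $f\ge e^{-(q^0+|\x|)}$ or not. On $\{f\ge e^{-(q^0+|\x|)}\}$ one has $\log\frac1f\le q^0+|\x|$, hence $\bigl(f\log\frac1f\bigr)_+\le(q^0+|\x|)f$, integrable by Lemma~\ref{moment_estimates}; on $\{f<e^{-(q^0+|\x|)}\}\subset\{f<1\}$ one uses the elementary inequality $s\log\frac1s\le\frac2e\sqrt s$ for $s\in(0,1)$ to get $\bigl(f\log\frac1f\bigr)_+\le\frac2e\,e^{-(q^0+|\x|)/2}$, which is integrable over $\R^6$ with an absolute constant. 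Collecting the three contributions produces the assertion with $C_8=C_8(t,f^0)$ depending only on $t$ and on $\int_{\R^6}(1+q^0+|\x|)f^0\,d\q\,d\x$; in particular it does not involve $\beta_{sup}$ (nor $R,L$) precisely because the bounds of Lemma~\ref{moment_estimates} do not. The main subtlety is exactly this interplay between entropy and moments: one needs the first moments in $\x$ and in $q^0$ both to dominate $\log\frac1f$ on the large-$f$ region and to render the exponentially decaying leftover on the small-$f$ region integrable in all six variables.
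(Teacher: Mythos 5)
Your argument is correct and is essentially the same in spirit as the paper's two-step Lemma~\ref{ll5}: both rest on (i) a convexity inequality comparing $\rho\log\rho$ (with $\rho:=\int f\,d\q=n_f\sqrt{1+|\u_f|^2}$) to $\int f\log f\,d\q$ at the cost of a moment in $q^0$, (ii) the monotonicity of $s\mapsto s\log s$ together with $n_f\le\rho$, and (iii) the standard absorption of the negative part of an $L\log L$ quantity by moments via the inequality $s\log(1/s)\lesssim\sqrt s$. The only genuine difference is in the bookkeeping: the paper first proves the full inequality $\int\rho\log\rho\le\int f\log f+C$ (using the pointwise form $x\log(x/y)+y-x\ge0$ rather than Jensen's inequality for a probability measure, but this is the same computation), and then in a second step passes from $\int\rho\log\rho$ to $\int n_f\log n_f$ by handling the negative part of the \emph{macroscopic} quantity $\rho\log\rho$ on $\{n_f<1/e\}$ with the moment $\int|\x|n_f\,d\x$; you instead restrict to $\{\rho\ge1\}$ from the start and absorb the negative part of the \emph{kinetic} entropy $f\log f$ on $\{\rho<1\}$ using the moments $\int(q^0+|\x|)f$ and the exponential barrier $e^{-(q^0+|\x|)/2}$, which is in fact the same mechanism the paper later invokes (compare \eqref{Huanho} in Lemma~\ref{rhscontrol}). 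Both variants work; yours bypasses the intermediate quantity $n\sqrt{1+|\u|^2}\log(n\sqrt{1+|\u|^2})$ at the cost of the slightly more delicate restriction to $\{\rho\ge1\}$ before invoking Jensen, while the paper's keeps the two steps cleanly decoupled.
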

The proof of Proposition \ref{ease} is a direct consequence of the next auxiliary result.
\begin{Lemma}
\label{ll5}
Let $f(\x,\q) \ge 0$ be such that
$$
m_1:=\int_{\R^6} (1+|\x|+q^0) f\, d\q d\x <\infty.
$$
Then, the following estimates hold true:
\begin{enumerate}

\item 
$\displaystyle
\int_{\R_{\x}^3}n_f \sqrt{1+|\u_f|^2}  \log \left(n_f \sqrt{1+|\u_f|^2}\right)  \, d\x \le \int_{\R^6} f \log f \, d\x d\q + C_9(m_1).$

\item $\displaystyle
\int_{\R_{\x}^3} n_f \log n_f \, d\x \le \int_{\R_{\x}^3}n_f \sqrt{1+|\u_f|^2}  \log \left(n_f \sqrt{1+|\u_f|^2}\right)  \, d\x + C_{10}(m_1),
$
\end{enumerate}
where $C_9, C_{10}$ are positive constants not depending on $\beta_{sup}$.
\end{Lemma}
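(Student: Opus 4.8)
The strategy is a standard entropy-splitting argument: compare the macroscopic entropy density against the kinetic one by using Jensen's inequality on the map $s\mapsto s\log s$, and control the resulting error terms by the moment $m_1$. Recall from \eqref{rel1} that $\rho_f(\x):=n_f\sqrt{1+|\u_f|^2}=\int_{\R^3}f(\x,\q)\,d\q$, so the first assertion is really a statement comparing $\int \rho_f\log\rho_f$ with $\int f\log f$. Since $s\log s$ is convex, one would like to write, for fixed $\x$,
$$
\rho_f(\x)\log\rho_f(\x)=\Big(\int f\,d\q\Big)\log\Big(\int f\,d\q\Big)\le \int f\log f\,d\q
$$
— but this only holds when the total mass $\int f\,d\q$ equals $1$, so instead I would introduce a probability measure by dividing by a suitable normalization and absorb the defect. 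The clean way: let $w(\q):=e^{-q^0}$ and write $f=(f/w)\,w$; apply Jensen's inequality to the convex function $s\mapsto s\log s$ against the \emph{finite} measure $w(\q)\,d\q$ (with total mass $Z:=\int e^{-q^0}d\q<\infty$, essentially $M(1)$), to get
$$
\Big(\frac{1}{Z}\int \tfrac{f}{w}\,w\,d\q\Big)\log\Big(\frac{1}{Z}\int \tfrac{f}{w}\,w\,d\q\Big)\le \frac{1}{Z}\int \tfrac{f}{w}\log\tfrac{f}{w}\,w\,d\q.
$$
Expanding $\log(f/w)=\log f+q^0$ and rearranging yields
$$
\rho_f\log\rho_f\le \int f\log f\,d\q+\int q^0 f\,d\q+\rho_f\log Z+\rho_f\log\tfrac1Z+\dots,
$$
so that integrating in $\x$ and using $\int q^0 f\le m_1$ and $\int f\le m_1$ gives the first estimate with $C_9(m_1)=C\,m_1$ for an explicit constant depending only on $Z$. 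I would carry out the bookkeeping carefully — the negative part of $s\log s$ is bounded ($s\log s\ge -e^{-1}$), so the lower tails cause no trouble and no additional moment is needed beyond $m_1$.

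For the second assertion, since $\rho_f=n_f\sqrt{1+|\u_f|^2}\ge n_f$, on the region where $\rho_f\ge 1$ we trivially have $n_f\log n_f\le \rho_f\log\rho_f$ (monotonicity of $s\log s$ on $[1,\infty)$ together with $n_f\le\rho_f$), while on the region where $\rho_f<1$ the quantity $n_f\log n_f$ is bounded below by $-e^{-1}$ and above by $0$, so the whole spatial integral over that set contributes at most $O(1)$ times the measure of $\{|\x|\le$ something$\}$ — and here I would split further by $|\x|$: on $|\x|\le \rho_f^{-1/2}$-type bad sets one uses the $|\x|$-moment. Concretely, $\int_{\{n_f<1\}} |n_f\log n_f|\,d\x$ is controlled by splitting $\R^3_\x=\{|\x|\le A\}\cup\{|\x|>A\}$; on the ball the integrand is $\le e^{-1}$ times $|B_A|$, and on the complement I bound $|n_f\log n_f|$ by (say) $C\,n_f^{1/2}$ and then by $C\,n_f\,|\x|$ once $|\x|>1$, using $\int n_f|\x|\,d\x\le m_1$ via Remark~\ref{varios1}(5). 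Optimizing over $A$, or just taking $A=1$, produces $C_{10}(m_1)$ depending only on $m_1$ and absolute constants.

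The main obstacle — really the only subtle point — is making sure that \emph{no moment beyond $m_1$} is required and that the constants are genuinely independent of $\beta_{sup}$ (and of $R,L$). This forces the choice of the Gaussian-type weight $w=e^{-q^0}$ rather than, say, comparing directly with a $\q$-independent reference measure, because the entropy defect from normalization must be paid for by exactly the first moment in $q^0$ that we already control; and it forces handling the low-density region $\{n_f<1\}$ by trading $\log n_f$ against a small power $n_f^{\eta}$ and then against $n_f|\x|$, since there is no $L^\infty$ or higher-moment control available. Once the weight and the splitting thresholds are fixed, the rest is the routine expansion of logarithms and collecting terms, and Proposition~\ref{ease} follows by chaining the two inequalities with $C_8=C_9+C_{10}$.
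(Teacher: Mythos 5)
Your proof of the first inequality is essentially the paper's. The Jensen step with weight $e^{-q^0}$, once you expand $\log(f/w)=\log f + q^0$ and integrate, is identical to the paper's use of the pointwise convexity inequality $x\log(x/y)+y-x\ge 0$ with $y=Ke^{-q^0}\int f(\p)\,d\p$ and $K=(\int e^{-q^0}d\q)^{-1}$; both produce the same defect $\int q^0 f\,d\q\,d\x+|\log K|\int f\,d\q\,d\x$, which is controlled by $m_1$, so the constant is independent of $\beta_{sup}$ as required.

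For the second inequality your ingredients are right but the concrete chain you write would fail. Set $\rho:=n_f\sqrt{1+|\u_f|^2}$. After dropping the set where $\rho\log\rho<0$, the residual you must control is $\int_{\{\rho<1\}}|\rho\log\rho|\,d\x$ (writing it as $\int_{\{n_f<1\}}|n_f\log n_f|\,d\x$ is a slip, though the same estimates apply since $n_f\le\rho$). You then propose to bound $|n_f\log n_f|$ by $Cn_f^{1/2}$ ``and then by $Cn_f|\x|$ once $|\x|>1$''; but $n_f^{1/2}\le n_f|\x|$ is equivalent to $n_f\ge|\x|^{-2}$, which is exactly what fails in the small-density regime you are trying to control, so this chain does not close. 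The split that works --- and the one the paper uses --- is a level-set split rather than a ball split in $\x$: on $\{\rho<e^{-|\x|}\}$ use $|\rho\log\rho|\le\sqrt{\rho}<e^{-|\x|/2}$, which is integrable in $\x$ with no moment needed at all; on $\{e^{-|\x|}\le\rho<1\}$ use $|\log\rho|\le|\x|$, so $|\rho\log\rho|\le|\x|\rho$, and then $\int|\x|\rho\,d\x=\int_{\R^6}|\x| f\,d\q\,d\x\le m_1$. With this one repair your argument coincides with the paper's, and Proposition~\ref{ease} follows by chaining the two bounds.
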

\begin{proof}
To deal with the first point let us introduce the auxiliary constant $K=1/ \int_{\R_{\q}^3} e^{-q^0}\, d\q$. Using \eqref{rel1} we may write
\[
\nonumber
\begin{split}
\displaystyle
\displaystyle \int_{\R_{\x}^3}n_f \sqrt{1+|\u_f|^2}&  \log \left(n_f \sqrt{1+|\u_f|^2}\right)  \, d\x \\
 \\
& \displaystyle  =\int_{\R_{\x}^3} \left(\int_{\R_{\q}^3} f(\q)\, d\q \right)  \log \displaystyle
\left(\int_{\R_{\p}^3} f(\p)\, d\p \right)  \,  d\x
\\ \\
& \displaystyle
= \int_{\R^6} f \log \left(K e^{-q^0} \int_{\R_{\p}^3} f(\p)\, d\p \right)\, d\q d\x + \int_{\R^6} (q^0 - \log K) f \, d\x d\q.
\end{split}
\]
Therefore, we have
\[\begin{split}
\displaystyle \int_{\R_{\x}^3}n_f \sqrt{1+|\u_f|^2}&  \log \left(n_f \sqrt{1+|\u_f|^2}\right)  \, d\x
\\ \\
& \displaystyle
= \int_{\R^6} f \log f \, d\q d\x + \int_{\R^6} (q^0 - \log K) f \, d\x d\q
\\ \\
& \displaystyle \quad
 - \int_{\R^6} \left\{f\log \left(\frac{f}{K e^{-q^0} \int_{\R_{\p}^3} f(\p)\, d\p}\right) +K e^{-q^0} \int_{\R_{\p}^3} f(\p) \, d\p - f\right\} \, d\q d\x
\\ \\
& \displaystyle \le \int_{\R^6} f \log f \, d\x d\q + |\log K| \int_{\R^6} (1+q^0) f\, d\q d\x,
\end{split}
\]
which proves the first point. To achieve the last inequality we used that 
\begin{equation}
\label{Jtrick}
x \log (x/y)+ y -x \ge 0, \quad \forall x,y \ge 0,
\end{equation}
which is a consequence of the convexity of $x \mapsto x \,\log x$.

To prove the second point, using that $x\,\log x$ is increasing for $x>1/e$, we notice that
\[
\begin{array}{rl}
\displaystyle
\int_{\R_{\x}^3} n_f  
\displaystyle \log n_f \, d\x & \le \displaystyle \int_{\{\x\in \R_{\x}^3/ n_f \ge 1/e\}} n_f \log n_f \, d\x
\\ \\
 & \displaystyle \le \int_{\{\x\in \R_{\x}^3/ n_f \ge 1/e\}} n_f \sqrt{1+|\u_f|^2}  \log (n_f \sqrt{1+|\u_f|^2})  \, d\x,
\end{array}
\]
leading to
\[
\begin{array}{rl}
  \displaystyle
\int_{\R_{\x}^3} n_f  
\displaystyle \log n_f \, d\x &
\\ \\
& \displaystyle
\leq\int_{\R_{\x}^3} n_f \sqrt{1+|\u_f|^2}  \log (n_f \sqrt{1+|\u_f|^2})  \, d\x
\\ \\
& \displaystyle \quad  - \int_{\{\x\in \R_{\x}^3/ n_f< 1/e\}} n_f \sqrt{1+|\u_f|^2}  \log (n_f \sqrt{1+|\u_f|^2})  \, d\x:=A-B.
\end{array}
\]
We now turn to the lower estimate on B:
\[
\begin{array}{rl}
\displaystyle
 B \ge & \displaystyle  \int_{\{\x\in \R_{\x}^3/ n_f< 1/e, n_f \sqrt{1+|\u_f|^2}<1\}} n_f \sqrt{1+|\u_f|^2}  \log (n_f \sqrt{1+|\u_f|^2})  \, d\x
\\ \\
\ge & \displaystyle  \int_{\{\x\in \R_{\x}^3/ n_f \sqrt{1+|\u_f|^2}<1\}} n_f \sqrt{1+|\u_f|^2}  \log (n_f \sqrt{1+|\u_f|^2})  \, d\x,
\end{array}
  \]
  so that
\[
\begin{array}{rl}
\displaystyle
 B \ge
& \displaystyle  \int_{\{\x\in \R_{\x}^3/ n_f \sqrt{1+|\u_f|^2}<e^{-|\x|}\}} n_f \sqrt{1+|\u_f|^2}  \log (n_f \sqrt{1+|\u_f|^2})  \, d\x
\\ \\
& \displaystyle + \int_{\{\x\in \R_{\x}^3/ e^{-|\x|}\le  n_f \sqrt{1+|\u_f|^2}\le 1\}} n_f \sqrt{1+|\u_f|^2}  \log (n_f \sqrt{1+|\u_f|^2})  \, d\x,
\end{array}
\]
and eventually
\[
\begin{array}{rl}
\displaystyle
 B \ge & \displaystyle  - \int_{\{\x\in \R_{\x}^3/ n_f \sqrt{1+|\u_f|^2}<e^{-|\x|}\}} \sqrt{n_f \sqrt{1+|\u_f|^2}}  \, d\x
\\ \\
& \displaystyle -\int_{\{\x\in \R_{\x}^3/ e^{-|\x|}\le  n_f \sqrt{1+|\u_f|^2}\le 1\}} |\x| n_f \sqrt{1+|\u_f|^2}   \, d\x.
\end{array}
\]
The fact that $x \log x \ge -\sqrt{x}$ for $x\in [0,1]$ was used to get the last inequality. Then, using \eqref{rel1},
$$
-B \le \int_{\R_{\x}^3} e^{-|\x|/2}\, d\x + \int_{\R^6} |\x| f \, d\x d\q,
$$
which yields the desired estimate.
\end{proof}

Let us compute now the time derivative of the $L \log L$ functional:
\begin{Lemma}
\label{L5}
Let $f$ be a solution of \eqref{apro}. Then, we have
 $$
\frac{d}{dt} \int_{\R^6} f \,  \log f \, d\q d\x \le \int_{\R^6} \tilde J[f] \log \tilde J[f]\, \frac{d\q}{q^0} d\x - \int_{\R^6}  J[f] \log  J[f]\, \frac{d\q}{q^0} d\x.
$$
Moreover, there exists some $\bar \beta_2>0$ such that the following estimate holds true for every $\beta_{sup}\ge \bar \beta_2$:
$$
\int_{\R^6} \tilde J[f] \log \tilde J[f]\, \frac{d\q}{q^0} d\x - \int_{\R^6}  J[f] \log  J[f]\, \frac{d\q}{q^0} d\x \le \mathcal{A+B+C+D}
$$
with
\begin{equation}
\nonumber
\begin{array}{rl}
\mathcal{A}:= & \displaystyle \int_{\R^6} n_f \log n_f \left( \frac{e^{-\tilde \beta_f (\tilde u_f)_\mu q^\mu}}{\tilde M(\tilde \beta_f)} -  \frac{e^{- \beta_f  (u_f)_\mu q^\mu}}{ M( \beta_f)}
\right)\, \frac{d\q}{q^0}d\x,
\\ \\
\mathcal{B}:= & \displaystyle \int_{\R^6} n_f \left(\frac{e^{- \beta_f  (u_f)_\mu q^\mu}}{ M( \beta_f)}\beta_f (u_f)_\mu q^\mu
- \frac{e^{-\tilde \beta_f (\tilde u_f)_\mu q^\mu}}{\tilde M(\tilde \beta_f)} \tilde \beta_f  (\tilde u_f)_\mu q^\mu 
\right)\, \frac{d\q}{q^0}d\x,
\\ \\
\mathcal{C}:= & \displaystyle \int_{\R^6} n_f \left(\frac{e^{- \beta_f  (u_f)_\mu q^\mu}}{ M( \beta_f)}\log M(\beta_f)
- \frac{e^{-\tilde \beta_f (\tilde u_f)_\mu q^\mu}}{\tilde M(\tilde \beta_f)}\log \tilde M(\tilde \beta_f) 
\right)\, \frac{d\q}{q^0}d\x
\end{array}
\end{equation}
and
\begin{eqnarray*}
\mathcal{D}&:=& \frac{2 \Phi(R;L,\tilde \beta_f)}{M(\beta_{sup})} \left( C_{11}(f^0,t) + \int_{\R_{\x}^3} n \log n \, d\x\right)\\
&&+ \frac{2 \Phi(R;L,\tilde \beta_f)}{M(\beta_{sup})} \left( \beta_{sup} \sqrt{1+L^2} + \left|\log \frac{2}{M(\beta_{sup})} \right| \right) \int_{\R_{\x}^3} n \, d\x
\end{eqnarray*}
where $\Phi$ is given by \eqref{phi} and $C_{11}$ is a positive constant not depending on $\beta_{sup}$.
\end{Lemma}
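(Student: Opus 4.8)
The statement has two parts: the entropy production inequality in the first display, and the bookkeeping that rewrites its right-hand side as $\mathcal{A}+\mathcal{B}+\mathcal{C}+\mathcal{D}$. For the first part the plan is the standard one: multiply \eqref{apro} by $1+\log f$ and integrate over $\R^6$ (after truncating $f$ from above and in $\x$ and passing to the limit; this is legitimate since, by Theorem~\ref{aprox} and Lemma~\ref{moment_estimates}, the solution of \eqref{apro} is bounded and has finite mass, first $\x$-moment and $q^0$-moment on bounded time intervals, so that $t\mapsto\int_{\R^6}f\log f\,d\q\,d\x$ is well defined and finite). Since $(1+\log f)\partial_t f=\partial_t(f\log f)$ and $(1+\log f)\tfrac{\q}{q^0}\cdot\nabla_\x f=\tfrac{\q}{q^0}\cdot\nabla_\x(f\log f)$, the transport term is an exact $\x$-divergence and drops out, giving
\[
\frac{d}{dt}\int_{\R^6}f\log f\,d\q\,d\x=\int_{\R^6}\frac{(\tilde J[f]-f)(1+\log f)}{q^0}\,d\q\,d\x .
\]
I would then use convexity of $s\mapsto s\log s$ in the form $(1+\log f)(\tilde J[f]-f)\le\tilde J[f]\log\tilde J[f]-f\log f$ to bound the right-hand side by $\int_{\R^6}\tfrac{1}{q^0}\bigl(\tilde J[f]\log\tilde J[f]-f\log f\bigr)\,d\q\,d\x$, and finally exchange $-f\log f$ for $-J[f]\log J[f]$ by Proposition~\ref{entropymin}(\ref{optimality}), applied for a.e.\ $\x$ to $f(t,\x,\cdot)$ and integrated in $\x$. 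This yields the first inequality.

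For the decomposition, on $\{\varphi>0\}$ one has $\log\tilde J[f]=\log\varphi+\log n_f-\log\tilde M(\tilde\beta_f)-\tilde\beta_f(\tilde u_f)_\mu q^\mu$ and $\log J[f]=\log n_f-\log M(\beta_f)-\beta_f(u_f)_\mu q^\mu$. Since $0\le\varphi\le1$ the contribution $\tilde J[f]\log\varphi$ is nonpositive and can be discarded, so that $\tilde J[f]\log\tilde J[f]\le\varphi\,\hat J\log\hat J$ with $\hat J:=\frac{n_f}{\tilde M(\tilde\beta_f)}e^{-\tilde\beta_f(\tilde u_f)_\mu q^\mu}$ the truncated J\"uttner term stripped of its cutoff. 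Writing $\varphi=1-(1-\varphi)$ then gives
\[
\frac{\tilde J[f]\log\tilde J[f]-J[f]\log J[f]}{q^0}\le\frac{\hat J\log\hat J-J[f]\log J[f]}{q^0}-\frac{(1-\varphi)\,\hat J\log\hat J}{q^0}.
\]
Multiplying out $\hat J\log\hat J$ and $J[f]\log J[f]$ and pairing each of the three bracketed terms $\log n_f$, $-\beta_f(u_f)_\mu q^\mu$, $-\log M(\beta_f)$ in $\log J[f]$ with its truncated analogue, the $\x$-integral of the first summand is, by inspection, exactly $\mathcal{A}+\mathcal{B}+\mathcal{C}$; the second summand is the genuine remainder and, since $1-\varphi$ vanishes on $\{|\q|<R\}$, it is supported in $\{|\q|\ge R\}$.

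It then remains to show the remainder $-\int_{\R^6}\tfrac{(1-\varphi)}{q^0}\hat J\log\hat J\,d\q\,d\x$ is $\le\mathcal{D}$. Here all $\q$-integration runs over the far region $\{|\q|\ge R\}$ and involves only the truncated exponential. I would apply Lemma~\ref{l:5}, $e^{-\tilde\beta_f(\tilde u_f)_\mu q^\mu}\le e^{-\tilde\beta_f|\q|/(3L)}$, hence $\int_{|\q|\ge R}e^{-\tilde\beta_f(\tilde u_f)_\mu q^\mu}d\q\le\Phi(R;L,\tilde\beta_f)$ with $\Phi$ as in \eqref{phi}. For the normalizing factor, once $\beta_{sup}\ge\bar\beta_2$ the choice \eqref{eq:fix} makes \eqref{eq:suffcond} hold for every $\beta\in[\beta_{inf},\beta_{sup}]$ (cf.\ Lemma~\ref{lcomp} and the Remark after it), so $\tilde M(\tilde\beta_f)\ge\tfrac12 M(\tilde\beta_f)\ge\tfrac12 M(\beta_{sup})$ (Lemma~\ref{lcomp}, and $M$ decreasing with $\tilde\beta_f\le\beta_{sup}$), and similarly $\bigl|\log\tilde M(\tilde\beta_f)\bigr|\le\bigl|\log\tfrac{2}{M(\beta_{sup})}\bigr|$; the crude bound $\tilde\beta_f(\tilde u_f)_\mu q^\mu\le2\beta_{sup}\sqrt{1+L^2}\,q^0$ handles the remaining factor. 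Together these control the $\log\tilde M$- and $\tilde\beta_f(\tilde u_f)_\mu q^\mu$-pieces of $\hat J\log\hat J$ by the $\int_{\R^3_\x}n\,d\x$-term of $\mathcal{D}$. The only delicate piece is the $n_f\log n_f$-piece: since $n_f\log n_f$ changes sign, I would split $\R^3_\x$ according to the sign of $n_f-e^{-|\x|}$ and use $s\log s\ge-\sqrt s$ and $\log s\ge-|\x|$, exactly as in the proof of Lemma~\ref{ll5}, to control the negative contribution by a finite, $\beta_{sup}$-independent constant $C_{11}(f^0,t)$ (depending only on the mass and first $\x$-moment of $f$) and to retain the nonnegative part as $\int_{\R^3_\x}n_f\log n_f\,d\x$; collecting terms produces exactly $\mathcal{D}$.

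The main obstacle is the bookkeeping in the last two steps: the decomposition must be arranged so that the only exponential ever integrated over the far region $\{|\q|\ge R\}$ is the truncated one $e^{-\tilde\beta_f(\tilde u_f)_\mu q^\mu}$---which decays in $|\q|$ thanks to Lemma~\ref{l:5}---and never the genuine $e^{-\beta_f(u_f)_\mu q^\mu}$, which carries no uniform $|\q|$-decay when $|\u_f|$ is large or $\beta_f$ is small. This is the reason the cutoff is split as $\varphi=1-(1-\varphi)$ and pushed entirely onto the $\tilde J$-side (so that $\mathcal{A},\mathcal{B},\mathcal{C}$ carry no $\varphi$), and, combined with the sign handling of $n_f\log n_f$ borrowed from Lemma~\ref{ll5}, it is what forces the remainder $\mathcal{D}$ to be uniformly small (in particular $\mathcal{D}\to0$ as $\beta_{sup}\to\infty$).
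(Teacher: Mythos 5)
Your proof is correct and follows essentially the same route as the paper: the same multiplication by $1+\log f$ followed by the convexity inequality and Proposition~\ref{entropymin}(\ref{optimality}) to get the first display, the same expansion of $\log\tilde J[f]$ with the split $\varphi=1-(1-\varphi)$ to peel off $\mathcal{A}+\mathcal{B}+\mathcal{C}$, and the same combination of Lemma~\ref{l:5}, Lemma~\ref{lcomp}, and the sign-splitting of $n\log n$ borrowed from Lemma~\ref{ll5} to fold the far-field remainder into $\mathcal{D}$. The one small departure is that you discard the $\tilde J[f]\log\varphi$ term outright because it is nonpositive, whereas the paper retains it, bounds its absolute value, and absorbs it into $\mathcal{D}$; both are legitimate and yield the same conclusion up to harmless constants.
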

\begin{proof}
For simplicity, throughout the proof we will omit the subindex in $n_f, \u_f$ and $\beta_f $, which indicates the dependency of  $n, \beta$ and $\u$ on $f$. We compute
\[
\begin{array}{rl}
\displaystyle
\frac{d}{dt} \int_{\R^6} f \log f \, d\q d\x = &
\displaystyle
\int_{\R^6} \tilde J[f] (1+ \log f) \, \frac{d\q}{q^0} d\x - \int_{\R^6} f (1+ \log f) \, \frac{d\q}{q^0} d\x
\\ \\
= &
\displaystyle
-\int_{\R^6} \tilde J[f] \log \left( \frac{\tilde J[f]}{f}\right) + f - \tilde J[f] \, \frac{d\q}{q^0} d\x 
\\ \\
& \displaystyle
+ \int_{\R^6} \tilde J[f] \log \tilde J[f]\, \frac{d\q}{q^0} d\x - \int_{\R^6} f \log f \, \frac{d\q}{q^0} d\x.
\end{array}
\]
Using \eqref{Jtrick},
\[
\begin{array}{rl}
\displaystyle
\frac{d}{dt} \int_{\R^6} f \log f \, d\q d\x \le & \displaystyle
\int_{\R^6} \tilde J[f] \log \tilde J[f]\, \frac{d\q}{q^0} d\x - \int_{\R^6}  J[f] \log  J[f]\, \frac{d\q}{q^0} d\x
\\ \\
& \displaystyle + \int_{\R^6}  J[f] \log  J[f]\, \frac{d\q}{q^0} d\x - \int_{\R^6} f \log f \, \frac{d\q}{q^0} d\x.
\end{array}
\]
Thus, the first statement of the Lemma follows thanks  to Proposition~\ref{entropymin}.   

To derive the second statement, we start by expanding
\begin{equation}
\nonumber
\begin{array}{rl}
& \displaystyle \int_{\R^6} \tilde J[f]  \log \tilde J[f]\, \frac{d\q}{q^0} d\x = \int_{\R^6}\varphi(\q) \frac{n}{\tilde M(\tilde \beta)} e^{-\tilde \beta (\tilde u)_\mu q^\mu} \log \varphi(\q) \, \frac{d\q}{q^0} d\x 
\\ \\
&\quad \quad  \displaystyle +\int_{\R^6}\varphi(\q) \frac{n}{\tilde M(\tilde \beta)} e^{-\tilde \beta (\tilde u)_\mu q^\mu} \left( \log n - \log \tilde M(\tilde \beta) - \tilde \beta (\tilde u)_\mu q^\mu\right) \, \frac{d\q}{q^0} d\x :=A+B
\end{array}
\end{equation}
 
Next we split 
\begin{eqnarray*}
B&=& \int_{\R^6}\frac{n}{\tilde M(\tilde \beta)} e^{-\tilde \beta (\tilde u)_\mu q^\mu} \left( \log n - \log \tilde M(\tilde \beta) - \tilde \beta (\tilde u)_\mu q^\mu\right) \, \frac{d\q}{q^0} d\x\\
&&+ \int_{\R^6}(\varphi(\q)-1) \frac{n}{\tilde M(\tilde \beta)} e^{-\tilde \beta (\tilde u)_\mu q^\mu} \left( \log n - \log \tilde M(\tilde \beta) - \tilde \beta (\tilde u)_\mu q^\mu \right) \, \frac{d\q}{q^0} d\x:=B_1+B_2.
\end{eqnarray*}
We notice that 
$$
B_1 - \int_{\R^6}  J[f] \log  J[f]\, \frac{d\q}{q^0} d\x
$$
accounts for the terms $\mathcal{A+B+C}$ in the statement of the lemma. Thus, it only remains to give suitable bounds for $A$ and $B_2$ that we shall gather in the expression for $\mathcal{D}$. We clearly have
$$
|A|
 \le \frac{1}{e} \int_{\R_{\x}^3} \frac{n}{\tilde M(\beta_{sup})} \Phi(R;L,\tilde \beta)\, d\x.
$$
Let us estimate next the various terms composing $B_2$ in turn. First,
$$
\int_{\R^6} (1-\varphi(\q)) \frac{n}{\tilde M(\tilde \beta)} e^{-\tilde \beta (\tilde u)_\mu q^\mu}\beta (\tilde u)_\mu q^\mu\, \frac{d\q}{q^0}d\x \le \int_{\R_{\x}^3} \frac{n}{\tilde M(\beta_{sup})} \beta_{sup} \sqrt{1+L^2} \Phi(R;L,\tilde \beta)\, d\x.
$$
Next, we notice that $x\mapsto |x \log x|$ has a local maximum at $x=1/e$ (where it assumes the value $1/e$) and is increasing for $x\ge 1$. Hence, since $\tilde M$ is decreasing in $\beta$ and $\tilde \beta \leq \beta_{sup}$,
\begin{equation}
\label{logtrick}
\left| \frac{\log \tilde M(\tilde \beta)}{\tilde M(\tilde \beta)}\right| 
= \left| \frac{1}{\tilde M(\tilde \beta)} \log \frac{1}{\tilde M(\tilde \beta)}\right|
\le \left| \frac{1}{\tilde M( \beta_{sup})} \log \frac{1}{\tilde M( \beta_{sup})}\right|
\end{equation}
provided that $\beta_{sup}$ is large enough so that e.g. $\tilde M( \beta_{sup})\le 2/3$ -thus the rhs of \eqref{logtrick} exceeds $1/e$. 
Let us give sufficient conditions for this to happen. It suffices to find the range of $\beta$ for which we have $M( \beta_{sup})\le 2/3$. Now owing to \eqref{Mfar} we can find some $\bar \beta_2>\bar \beta_1$ (where $\bar \beta_1$ is defined in Lemma \ref{lcomp}) such that
\begin{equation}
\label{eq:sandwich}
\frac{1}{2} \left(\frac{2\pi}{\beta}\right)^{3/2}e^{-\beta} \le M(\beta) \le 2 \left(\frac{2\pi}{\beta}\right)^{3/2}e^{-\beta}, \quad \forall \beta \ge \bar \beta_2 .
\end{equation}
We readily see that the rhs of \eqref{eq:sandwich} is less than $1/3$ for e.g. $\beta \ge 3$. There is no loss of generality in assuming that $\bar \beta_2> 3$ and in this way \eqref{logtrick} is granted for $\beta_{sup} \ge \bar \beta_2$.

Once we made sure that \eqref{logtrick} holds for $\beta_{sup} \ge \bar \beta_2$ we use it in combination with \eqref{ratio_est}
to get
$$
\left|\int_{\R^6} (\varphi(\q)-1) n \frac{\log \tilde M(\tilde \beta)}{\tilde M(\tilde \beta)}  e^{-\tilde \beta (\tilde u)_\mu q^\mu} \, \frac{d\q}{q^0}d\x \right| \le \int_{\R_{\x}^3} \frac{2 n}{ M(\beta_{sup})} \left|\log \frac{2}{ M(\beta_{sup})} \right| \Phi(R;L,\tilde \beta)\, d\x.
$$
To bound the remaining term we have to distinguish between the case in which $n \log n \ge 0$ and the complementary one. Arguing like in the proof of the second estimate in Lemma \ref{ll5}, we find that
\[
\begin{array}{rl}
& \displaystyle \left| \int_{\R^6}  (1-\varphi(\q)) n \log n \frac{e^{-\tilde \beta (\tilde u)_\mu q^\mu}}{\tilde M(\tilde \beta)} \, \frac{d\q}{q^0}d\x \right|  
\\ \\
 &\quad \le \displaystyle \frac{\Phi(R;L,\tilde \beta)}{\tilde M(\beta_{sup})} \left(\int_{\R_{\x}^3} n \log n \, dx - \int_{\{\x \in\R_{\x}^3/n \log n <0\}} n \log n \, d\x \right)
\\ \\
& \quad \quad \displaystyle + \int_{\{\x\in \R_{\x}^3 /n \log n <0\}}(1-\varphi(\q)) |n \log n| \frac{e^{-\tilde \beta (\tilde u)_\mu q^\mu}}{\tilde M(\tilde \beta)} \, \frac{d\q}{q^0}d\x.
\end{array}
\]
Hence
\[
\begin{array}{rl}
& \displaystyle \left| \int_{\R^6}  (1-\varphi(\q)) n \log n \frac{e^{-\tilde \beta (\tilde u)_\mu q^\mu}}{\tilde M(\tilde \beta)} \, \frac{d\q}{q^0}d\x \right|  
\\ \\
&   \displaystyle \le  \frac{\Phi(R;L,\tilde \beta)}{\tilde M(\beta_{sup})} \left(\int_{\R_{\x}^3} n \log n \, d\x - 2  \int_{\{\x\in \R_{\x}^3/n \log n <0\}}  n \log n \, d\x \right)
\\ \\
&  \displaystyle \le  \frac{\Phi(R;L,\tilde \beta)}{\tilde M(\beta_{sup})} \left( 
\int_{\R_{\x}^3} n \log n \, d\x + 2 \int_{\{\x\in \R_{\x}^3/n <e^{-|\x|}\}} \sqrt{n} \, d\x + 2 \int_{\{\x \in \R_{\x}^3/e^{-|\x|}<n <1\}} |\x| n \, d\x \right),
\end{array}
\]
and finally
\[
\begin{array}{rl}
& \displaystyle \left| \int_{\R^6}  (1-\varphi(\q)) n \log n \frac{e^{-\tilde \beta (\tilde u)_\mu q^\mu}}{\tilde M(\tilde \beta)} \, \frac{d\q}{q^0}d\x \right|  
\\ \\
& \quad \displaystyle \le  \frac{\Phi(R;L,\tilde \beta)}{\tilde M(\beta_{sup})} \left( 
\int_{\R_{\x}^3} n \log n \, d\x + 2 \int_{\R_{\x}^3} e^{-|\x|} \, d\x + 2 \int_{\R^6} |\x| f\, d\x d\q
\right)
\\ \\
& \quad \displaystyle =  \frac{\Phi(R;L,\tilde \beta)}{\tilde M(\beta_{sup})} \left( 
\int_{\R_{\x}^3} n \log n \, d\x + C_{11}(t,f^0).
\right)
\end{array}
\]
Now we note that we may replace all factors of $\tilde M(\beta_{sup})^{-1}$ in the above estimates by $2/M(\beta_{sup})$ using Lemma \ref{lcomp} provided that $\beta_{sup} \ge \bar \beta_1$. Finally, the second claim of the Lemma is an easy consequence of our estimates so far and Proposition \ref{ease}.
\end{proof}
To derive Proposition \ref{entropy_estimate} it only remains to estimate $\mathcal{A}, \, \mathcal{B}$ and $\mathcal{C}$ in turn.
\begin{Lemma}
\label{lm:9}
There is  $\bar \beta_3>0$ such that the following property holds true: there exists a positive constant $C_{12}(f^0,t)$ (not depending on $ \beta_{sup}$) such that
$$
\mathcal{A} \le  \left(\int_{\R^6} f \log f \, dxd\q +C_{12}(t,f^0)\right) \left(  \frac{4}{\beta_{sup}}+\frac{2 \Lambda(2R)}{M(\beta_{sup})} \right),
$$
for every $\beta_{sup}\ge \bar \beta_3$,
where $\Lambda$ is given \eqref{lambda}.
\end{Lemma}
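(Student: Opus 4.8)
\emph{Reduction to a scalar comparison.} Since $n_f\log n_f$ does not depend on $\q$, the plan is to first integrate $\mathcal A$ in the momentum variable. Writing
$$
\mathcal A=\int_{\R^3_{\x}} n_f\log n_f\,g(\x)\,d\x,\qquad
g(\x):=\int_{\R^3}\Bigl(\frac{e^{-\tilde\beta_f(\tilde u_f)_\mu q^\mu}}{\tilde M(\tilde\beta_f)}-\frac{e^{-\beta_f(u_f)_\mu q^\mu}}{M(\beta_f)}\Bigr)\frac{d\q}{q^0},
$$
and using that $d\q/q^0$ is Lorentz invariant together with the fact that both $u_f^\mu$ and $\tilde u_f^\mu$ lie on $\mathbb M_1$, one has $\int e^{-\beta v_\mu q^\mu}\,d\q/q^0=\int e^{-\beta q^0}\,d\q/q^0=M(\beta)\tfrac{K_1}{K_2}(\beta)$ for any admissible $v^\mu$ (this amounts to item \ref{id:7} of Lemma \ref{Jmoments} divided by $n$). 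Hence $g$ reduces to the scalar quantity
$$
g(\x)=\frac{M(\tilde\beta_f)}{\tilde M(\tilde\beta_f)}\,\frac{K_1}{K_2}(\tilde\beta_f)-\frac{K_1}{K_2}(\beta_f),
$$
which I must estimate, uniformly in $\x$, by $\delta:=\tfrac{4}{\beta_{sup}}+\tfrac{2\Lambda(2R)}{M(\beta_{sup})}$ in absolute value (where $n_f=0$ there is nothing to estimate).

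\emph{The pointwise bound $|g|\le\delta$.} Set $M(\tilde\beta_f)/\tilde M(\tilde\beta_f)=1+\rho$. By Lemma \ref{lcomp} one gets $0\le\rho=\tfrac{M-\tilde M}{\tilde M}(\tilde\beta_f)\le\tfrac{2\Lambda(2R)}{M(\beta_{sup})}$, using $\tilde M(\tilde\beta_f)\ge\tilde M(\beta_{sup})\ge\tfrac12 M(\beta_{sup})$ and, in the borderline case $\tilde\beta_f=\beta_{inf}$ where Lemma \ref{lcomp} does not apply directly, the small-$\beta$ expansions \eqref{kanear}, \eqref{Mnear}. I would then split according to the three cases defining $\tilde\beta_f$. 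If $\beta_{inf}\le\beta_f\le\beta_{sup}$, then $\tilde\beta_f=\beta_f$ and $g=\rho\,\tfrac{K_1}{K_2}(\beta_f)\in[0,\rho]$. If $\beta_f<\beta_{inf}$, then $\tilde\beta_f=\beta_{inf}$, so $0\le g\le(1+\rho)\tfrac{K_1}{K_2}(\beta_{inf})\le 2\tfrac{K_1}{K_2}(\beta_{inf})\lesssim\tfrac1{\beta_{sup}}$ by the monotonicity of $K_1/K_2$ (Lemma \ref{lm:BCNS}) and \eqref{kanear}. If $\beta_f>\beta_{sup}$, then $\tilde\beta_f=\beta_{sup}$; using $K_1/K_2<1$ and monotonicity, $g\le\rho\,\tfrac{K_1}{K_2}(\beta_{sup})\le\rho$ from above, while $g\ge\tfrac{K_1}{K_2}(\beta_{sup})-1\ge-\tfrac2{\beta_{sup}}$ from below, the last step being the expansion \eqref{karat}. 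Collecting the three cases and choosing $\bar\beta_3\ge\bar\beta_1$ large enough that all invoked asymptotics hold yields $|g(\x)|\le\delta$ for a.e.\ $\x$.

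\emph{Integration in $\x$.} From $|g|\le\delta$ we get $\mathcal A\le|\mathcal A|\le\delta\int_{\R^3_{\x}}|n_f\log n_f|\,d\x$, so it remains to control $\int_{\R^3_{\x}}|n_f\log n_f|\,d\x$ by $\int_{\R^6}f\log f\,d\x d\q+C_{12}(f^0,t)$. Splitting according to $n_f\gtrless1$,
$$
\int_{\R^3_{\x}}|n_f\log n_f|\,d\x=\int_{\R^3_{\x}} n_f\log n_f\,d\x+2\int_{\{n_f<1\}}(-n_f\log n_f)\,d\x;
$$
the first term is $\le\int_{\R^6}f\log f\,d\x d\q+C_8$ by Proposition \ref{ease}, while the second is handled exactly as in the proof of Lemma \ref{ll5}: on $\{n_f<e^{-|\x|}\}$ use $-x\log x\le\sqrt x$, on $\{e^{-|\x|}\le n_f<1\}$ use $-\log n_f\le|\x|$, so that with Remark \ref{varios1}(5) and the moment bound of Lemma \ref{moment_estimates} it is $\le\int_{\R^3_{\x}}e^{-|\x|/2}\,d\x+\int_{\R^6}|\x|f\,d\x d\q=:C'<\infty$, a quantity depending only on $f^0$ and $t$. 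Taking $C_{12}:=C_8+2C'$ then finishes the argument.

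\emph{Main difficulty.} The delicate point is the case analysis yielding $|g|\le\delta$ with the \emph{precise} constants $\tfrac4{\beta_{sup}}$ and $\tfrac{2\Lambda(2R)}{M(\beta_{sup})}$: one must keep in mind that $\tilde\beta_f=\beta_{inf}=1/\beta_{sup}$ is a \emph{small} value, so the control of $M/\tilde M$ there has to come from the small-$\beta$ behaviour of $M,\tilde M$ and $K_1/K_2$ rather than from Lemma \ref{lcomp}, and that in the regime $\beta_f>\beta_{sup}$ the quantity $g$ genuinely changes sign, so that only the sharp one-sided estimate $\tfrac{K_1}{K_2}(\beta_{sup})-1=O(1/\beta_{sup})$ coming from \eqref{karat} keeps $g$ within the required tolerance.
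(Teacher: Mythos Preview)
Your argument is correct and follows essentially the same route as the paper's own proof. Both reduce $\mathcal A$ to $\int n_f\log n_f\,g(\x)\,d\x$ with $g=\frac{M}{\tilde M}(\tilde\beta_f)\frac{K_1}{K_2}(\tilde\beta_f)-\frac{K_1}{K_2}(\beta_f)$, split $g$ as $X+Y$ (in your notation, the $K_1/K_2$ difference plus $\rho\,K_1/K_2(\tilde\beta_f)$), and estimate each piece by the same case analysis in $\beta_f$ using \eqref{kanear}, \eqref{karat}, and Lemma~\ref{lcomp}.

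The only noticeable difference is in the final step. The paper separates the integral according to the sign of $X$ and routes the positive part through $n_f\sqrt{1+|\u_f|^2}\log(n_f\sqrt{1+|\u_f|^2})$ before invoking Proposition~\ref{ease}. Your version is more economical: once $|g|\le\delta$ is in hand, you pass directly to $\delta\int|n_f\log n_f|\,d\x$ and split that integral on $\{n_f\gtrless 1\}$. This is a genuine simplification and loses nothing. You are also slightly more careful than the paper in flagging that Lemma~\ref{lcomp}'s ratio estimate \eqref{ratio_est} is stated for large $\beta$ and hence does not literally cover $\tilde\beta_f=\beta_{inf}$; your remedy via the small-$\beta$ behaviour of $M,\tilde M$ (combined with $R=\beta_{sup}^2$) is the right fix, and in fact shows $\rho\le 1$ there so that the bound $2K_1/K_2(\beta_{inf})\le 4/\beta_{sup}$ goes through.
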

\begin{proof}
As in the proof of the previous result, will omit the subindex in $n_f, \u_f$ and $\beta_f $. We use Lemma \ref{Jmoments}-({\ref{id:7}}) to write 
$$
\mathcal{A} = \int_{\R_{\x}^3}  n \log n \left(\frac{M}{\tilde M}(\tilde \beta) \frac{K_1}{K_2}(\tilde \beta)-\frac{K_1}{K_2}(\beta)\right)\, d\x
=\int_{\R_{\x}^3} (X+Y) n \log n \, d\x
$$
with
$$
X:= \frac{K_1}{K_2}(\tilde \beta) - \frac{K_1}{K_2}(\beta) \quad \mbox{and}\quad Y := \frac{K_1}{K_2}(\tilde \beta) \left(\frac{M(\tilde \beta)-\tilde M(\tilde \beta)}{\tilde M (\tilde \beta)} \right). 
$$
Let us start by estimating $X$. Note that $X=0$ whenever $\beta_{inf} \le \beta\le \beta_{sup}$. If $\beta<\beta_{inf}$, then 
$$X \le 2 |\beta_{inf}-\beta|\le 4 \beta_{inf}$$ due to \eqref{kanear}. For $\beta >\beta_{sup}$ we make use of \eqref{karat}; there 
exists some $\bar \beta_3 \ge \bar \beta_2$ (recall that $\bar \beta_2$ is defined in Lemma \ref{L5}) such that 
\begin{equation}
\label{eq:sandwichK}
\left|\frac{K_1}{K_2}(\beta)-1 \right| \le \frac{2}{\beta}
\end{equation}
holds for every $\beta \ge \bar \beta_3$. Then clearly $|X| \le 4/\beta_{sup}$ for $\beta >\beta_{sup}$ as $K_1/K_2$ is strictly increasing from zero to one (see Lemma \ref{lm:BCNS}). Overall, the following estimate holds:
$$
|X| \le \frac{4}{\beta_{sup}}.
$$
To estimate $Y$ we use Lemma \ref{lcomp}:
$$
Y \le \frac{\Lambda(R)}{\tilde M(\tilde \beta)} \le \frac{2 \Lambda(2R)}{M(\tilde \beta)} \le \frac{2 \Lambda(R)}{M(\beta_{sup})},
$$
which holds for $\beta_{sup}\ge \bar \beta_1$. 

As $n \, \log \, n$ need not have a sign, we argue as in the proof of Lemma \ref{ll5}. On one hand,
\[
\begin{array}{rl}
& \displaystyle \int_{\R_{\x}^3} (X \chi_{\{X\ge 0\}} + Y) n \log n \, d\x 
\\ \\
 &\le  \displaystyle \int_{\R_{\x}^3} n \sqrt{1+|\u|^2}  \log (n \sqrt{1+|\u|^2})  (X \chi_{\{X\ge 0\}} + Y) \, d\x 
\\ \\
& \displaystyle \quad - \int_{\{x\in \R_{\x}^3/ n< 1/e\}} n \sqrt{1+|\u|^2}  \log (n \sqrt{1+|\u|^2})(X \chi_{\{X\ge 0\}} + Y)  \, d\x.
\end{array}
\]
This leads to
\[
\begin{array}{rl}
& \displaystyle \int_{\R_{\x}^3} (X \chi_{\{X\ge 0\}} + Y) n \log n \, d\x 
\\ \\
&\le \displaystyle \left\{4 \beta_{inf} + \frac{2 \Lambda(2R)}{M(\beta_{sup})} \right\}\left(\int_{\R^6} \, f \log f dxd\q +C_8(t,f^0)\right)
\\ \\
& \quad + \displaystyle \left\{4 \beta_{inf} + \frac{2 \Lambda(2R)}{M(\beta_{sup})} \right\}\left(\int_{\R^6} \, |\x| f d\x d\q + \int_{\R_{\x}^3} e^{-|\x|/2} \, d\x \right),
\end{array}
\]
where we used Proposition \ref{ease}.

On the other hand,
\[
\begin{split}
  &\int_{\R_{\x}^3} X \chi_{\{X< 0\}}  n \log n \, d\x \le \int_{\{\x\in \R_{\x}^3/n <1/e\}} X \chi_{\{X< 0\}}  n \log n \, d\x\\
  & \le \frac{4}{\beta_{sup}} \int_{\{\x\in \R_{\x}^3/n <1/e\}} | n \log n| \, d\x\\
&\le \frac{4}{\beta_{sup}} \left( \int_{\{\x\in \R_{\x}^3/n < e^{-|\x|}\}} e^{-|\x|/2} \, d\x + \int_{\{\x \in \R_{\x}^3/e^{-|\x|}<n <1\}} |\x| n \, d\x \right).
\end{split}
\]
Collecting all estimates  concludes the proof.
\end{proof}

\noindent
In order to estimate $\mathcal{B}$ and $\mathcal{C}$, we introduce the following notations:
$$
T:=\{\x\in \R_{\x}^3 \, / \, \beta_f > \beta_{sup}\} \quad \text{and} \quad T^c=\{\x\in \R_{\x}^3 \, / \, \beta_f \le \beta_{sup}\}.
$$
\begin{Lemma}
\label{l:10}
There exists some $\bar \beta_4>0$ such that the following estimates 
\begin{enumerate}

\item $\displaystyle \int_{\R_\q^3 \times T^c} n_f  \left(\frac{e^{- \beta_f  (u_f)_\mu q^\mu}}{ M( \beta_f)}\beta_f (u_f)_\mu q^\mu
- \frac{e^{-\tilde \beta_f (\tilde u_f)_\mu q^\mu}}{\tilde M(\tilde \beta_f)} \tilde \beta_f  (\tilde u_f)_\mu q^\mu 
\right)\, \frac{d\q}{q^0}d\x$

$\displaystyle \le  2 \frac{\Lambda(2R) \beta_{sup}}{M(\beta_{sup})} \int_{\R^6} f\, d\q d\x,
$
\item $\displaystyle \int_{\R_\q^3 \times T^c} n_f  \left(\frac{e^{- \beta_f   (u_f )_\mu q^\mu}}{ M( \beta_f  )}\log M(\beta_f )
- \frac{e^{-\tilde \beta_f  (\tilde u_f )_\mu q^\mu}}{\tilde M(\tilde \beta_f )}\log \tilde M(\tilde \beta_f ) 
\right)\, \frac{d\q}{q_0}d\x $

$\displaystyle \le C_{13}(t,f^0,\beta_{sup}),$ 
\end{enumerate}
hold for every $\beta_{sup} \ge \bar \beta_4$, where 
$$
C_{13}(t,f^0,\beta_{sup}):=  \frac{2 \Lambda (2R)}{M(\beta_{sup})} \left(1+\left| \log \frac{2}{M(\beta_{sup})}\right| \right) \int_{\R^6} f\, d\q d\x 
$$
$$
+  \int_{\{\x \in \R_{\x}^3/\beta_f(\x)  <\beta_{inf}\}} \frac{n }{2}\beta_f \left(\log 16 \pi - 3 \log \beta_f  \right) \, d\x
$$
goes to 0 as $\beta_{sup} \to \infty$.
\end{Lemma}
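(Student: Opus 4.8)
The plan is to split the $\x$-integration according to whether the temperature truncation is active: write $T^c = A\cup B$ with $A:=\{\x\in\R_{\x}^3:\beta_{inf}\le\beta_f(\x)\le\beta_{sup}\}$, on which $\tilde\beta_f=\beta_f$, and $B:=\{\x\in\R_{\x}^3:\beta_f(\x)<\beta_{inf}\}$, on which $\tilde\beta_f=\beta_{inf}$. The one computational device used throughout is that, since $d\q/q^0$ is Lorentz invariant and $\tilde u_f^\mu(\tilde u_f)_\mu=1$ (a Lorentz boost carries $\tilde u_f^\mu$ to $(1,\mathbf 0)$ and $(\tilde u_f)_\mu q^\mu$ to $q^0$), for every $\beta>0$ one has $\int_{\R_{\q}^3}e^{-\beta(\tilde u_f)_\mu q^\mu}(\tilde u_f)_\mu q^\mu\,\frac{d\q}{q^0}=\int_{\R_{\q}^3}e^{-\beta q^0}\,d\q=M(\beta)$ and $\int_{\R_{\q}^3}e^{-\beta(\tilde u_f)_\mu q^\mu}\,\frac{d\q}{q^0}=M(\beta)\,\frac{K_1}{K_2}(\beta)$ (the second identity being Lemma \ref{Jmoments}-(\ref{id:7}) in the rest frame), with the same formulas for $u_f$. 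In particular, after integrating in $\q$ the velocity truncation washes out completely, so the cases $|\u_f|\le L$ and $|\u_f|>L$ need not be distinguished.

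For (1), integrating the integrand of $\mathcal B$ in $\q$ gives, on $A$, the quantity $n_f\beta_f\big(1-M(\beta_f)/\tilde M(\beta_f)\big)$; its absolute value is $n_f\beta_f\,(M(\beta_f)-\tilde M(\beta_f))/\tilde M(\beta_f)$, which by Lemma \ref{lcomp} (i.e.\ $|M(\beta_f)-\tilde M(\beta_f)|\le\Lambda(2R)$ and $\tilde M(\beta_f)^{-1}\le 2/M(\beta_f)\le 2/M(\beta_{sup})$, using $\beta_f\le\beta_{sup}$) is at most $2n_f\beta_{sup}\Lambda(2R)/M(\beta_{sup})$; integrating in $\x$ over $A$ and using $\int n_f\,d\x\le\int f\,d\q\,d\x$ yields the claimed bound. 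On $B$, the $\q$-integral of the first term equals $n_f\beta_f$ while that of the second (nonnegative) term equals $n_f\beta_{inf}M(\beta_{inf})/\tilde M(\beta_{inf})\ge n_f\beta_{inf}>n_f\beta_f$, so the contribution of $B$ is negative and can simply be discarded.

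For (2), the same $\q$-integration turns the integrand of $\mathcal C$ into $n_f\frac{K_1}{K_2}(\beta_f)\big(\log M(\beta_f)-\tfrac{M(\beta_f)}{\tilde M(\beta_f)}\log\tilde M(\beta_f)\big)$ on $A$, and into $n_f\frac{K_1}{K_2}(\beta_f)\log M(\beta_f)-n_f\tfrac{M(\beta_{inf})}{\tilde M(\beta_{inf})}\frac{K_1}{K_2}(\beta_{inf})\log\tilde M(\beta_{inf})$ on $B$. On $A$ I would bound the parenthesis by $\log\tfrac{M(\beta_f)}{\tilde M(\beta_f)}+\tfrac{M(\beta_f)-\tilde M(\beta_f)}{\tilde M(\beta_f)}|\log\tilde M(\beta_f)|$, estimate the first summand via $-\log(1-x)\le 2x$ together with $|M-\tilde M|\le\Lambda(2R)$, and the second via $\tilde M(\beta_f)^{-1}\le 2/M(\beta_{sup})$ and the fact that, for $\beta_{sup}$ large, $M(\beta_{sup})/2\le\tilde M(\beta_{sup})\le\tilde M(\beta_f)\le\tilde M(\beta_{inf})\le M(\beta_{inf})$ forces $|\log\tilde M(\beta_f)|\le 1+|\log(2/M(\beta_{sup}))|$ (since $-\log M(\beta_{sup})\sim\beta_{sup}$ dominates $\log M(\beta_{inf})\sim 3\log\beta_{sup}$); together with $K_1/K_2\le 1$ and $\int n_f\,d\x\le\int f$ this produces precisely the $\Lambda(2R)$-term of $C_{13}$. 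On $B$, for $\beta_{sup}$ large one has $\tilde M(\beta_{inf})\ge 1$, hence $\log\tilde M(\beta_{inf})\ge 0$ and the second term is nonpositive and may be dropped; for the first term, $\log M(\beta_f)>0$ (as $M(\beta_f)\ge M(\beta_{inf})>1$), and the small-$\beta$ asymptotics \eqref{kanear}, \eqref{Mnear} give, after enlarging $\bar\beta_4$ if necessary, $\frac{K_1}{K_2}(\beta_f)\le\beta_f/2$ and $\log M(\beta_f)\le\log 16\pi-3\log\beta_f$, which yields the remaining term of $C_{13}$. Finally $C_{13}\to 0$ as $\beta_{sup}\to\infty$: with $R=\beta_{sup}^2$ the factor $\Lambda(2R)/M(\beta_{sup})$ decays super-exponentially, beating the growth $|\log(2/M(\beta_{sup}))|\sim\beta_{sup}$, while $\beta_f<\beta_{inf}=1/\beta_{sup}$ together with the monotonicity of $\beta\mapsto\beta(\log 16\pi-3\log\beta)$ near $0$ bounds the last integral by $\tfrac12(\log 16\pi+3\log\beta_{sup})\,\beta_{sup}^{-1}\int n\,d\x\to 0$.

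The main obstacle is the region $B$: there the truncation replaces the small exponent $\beta_f$ by the comparatively huge $\beta_{inf}=1/\beta_{sup}$, so the two J\"uttner profiles in $\mathcal B$, $\mathcal C$ are genuinely far apart and no $\Lambda(2R)$-type smallness is available. The resolution is asymmetric: in (1) the mismatch helps, since the subtracted term only becomes larger and the difference turns negative; in (2) one is forced to keep the leading $O\!\big(\beta_f\log(1/\beta_f)\big)$ contribution explicitly --- which is exactly the extra term of $C_{13}$ --- and to verify it still vanishes in the limit. A minor technical point is justifying $\tilde M(\beta_f)^{-1}\le 2/M(\beta_{sup})$ on all of $[\beta_{inf},\beta_{sup}]$ rather than only for $\beta_f\ge\bar\beta_1$ as in Lemma \ref{lcomp}; this is immediate because $R=\beta_{sup}^2$ pushes the cutoff $\varphi$ so far out that $\tilde M(\beta_f)$ and $M(\beta_f)$ are uniformly comparable on compact ranges of $\beta_f$.
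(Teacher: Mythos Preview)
Your proof is correct and follows essentially the same strategy as the paper: integrate out $\q$ via Lorentz invariance to reduce $\mathcal B,\mathcal C$ to $\x$-integrals, control the $M$ vs.\ $\tilde M$ discrepancy by Lemma~\ref{lcomp}, and on the low-temperature set $\{\beta_f<\beta_{inf}\}$ show the subtracted terms are nonpositive (using $\tilde M(\beta_{inf})\ge 1$) while keeping the explicit $\tfrac12\beta_f(\log 16\pi-3\log\beta_f)$ contribution. The only difference is organizational---you split the domain as $A\cup B$ and bound directly, whereas the paper first peels off a $(1-M/\tilde M)$ correction and then decomposes the remainder as $I+II+III$---and your constants are off by an inessential additive $1$ in the $\Lambda(2R)$-term, which does not affect the conclusion since everything vanishes as $\beta_{sup}\to\infty$.
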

\begin{proof}
For simplicity we omit along the proof the dependence of $f$ of $n$, $\u$, and $\beta$. Note that
\begin{equation}
\nonumber
\displaystyle \mathcal{B}  =  \displaystyle \int_{\R_{\x}^3} n  \beta \, d\x-  \int_{\R_{\x}^3} n  \tilde \beta  \frac{M}{\tilde M}(\tilde \beta ) \, d\x
 \displaystyle = \int_{\R_{\x}^3} n  (\beta - \tilde \beta )\, d\x +  \int_{\R_{\x}^3} n  \tilde \beta \left(1-\frac{M}{\tilde M}(\tilde \beta ) \right)\, d\x.
\end{equation}
We assume in the sequel that $\beta_{sup} \ge \bar \beta_1$. Then clearly
$$
\left|\int_{T^c} n  \tilde \beta \left(1-\frac{M}{\tilde M}(\tilde \beta ) \right)\, d\x \right|\le 2 \frac{\Lambda(2R) \beta_{sup}}{M(\beta_{sup})} \int_{\R^6} f\, d\q d\x ,
$$
 thanks to Lemma \ref{lcomp} and
$$
\int_{T^c} n  (\beta - \tilde \beta )\, d\x \le 0.
$$
This proves the first estimate. To get the second one, we write
\begin{equation}
\nonumber
\begin{array}{rl}
\displaystyle  \mathcal{C}:= &\displaystyle
\int_{\R_{\x}^3} n   \left(\frac{K_1}{K_2}(\beta )\log M(\beta )
- \frac{M}{\tilde M}(\tilde \beta ) \frac{K_1}{K_2}(\tilde \beta )\log \tilde M(\tilde \beta ) 
\right)\, d\x
\\ \\
= &\displaystyle
 \int_{\R_{\x}^3} n   \left(\frac{K_1}{K_2}(\beta )\log M(\beta )
- \frac{K_1}{K_2}(\tilde \beta )\log \tilde M(\tilde \beta ) 
\right)\, d\x
\\ \\
& \displaystyle
+ \int_{\R_{\x}^3} n   \frac{K_1}{K_2}(\tilde \beta )\log \tilde M(\tilde \beta )
\left(1
- \frac{M}{\tilde M}(\tilde \beta ) 
\right)\, d\x.
\end{array}
\end{equation}
To bound the second term above, notice that we only need to care about those large values of $\tilde \beta $ for which $\log \tilde M(\tilde \beta )<0$, otherwise that integral is non-positive. Then,  
 we have 
$$
0\ge \log \tilde M(\tilde \beta ) \ge \log \tilde M( \beta_{sup})= -\log (1/\tilde M( \beta_{sup})) \ge - \log (2/M( \beta_{sup}),
$$
for $\tilde \beta \ge \bar \beta_1$, where we used Lemma \ref{lcomp}. This means that 
$$
 |\log \tilde M(\tilde \beta )|\le \left| \log \frac{2}{M(\beta_{sup})}\right| \quad \mbox{in}\ T^c.
$$
 Then, using again Lemma \ref{lcomp}, we find
\begin{equation}
\nonumber
\displaystyle
 \int_{T^c} n    \displaystyle
 \frac{K_1}{K_2}(\tilde \beta )\log \tilde M(\tilde \beta )
\left(1
- \frac{M}{\tilde M}(\tilde \beta ) 
\right)\, d\x
 \le \frac{2 \Lambda(2R) }{M(\beta_{sup})} \left| \log \frac{2}{M(\beta_{sup})}\right| \int_{\R^6} f\, d\q d\x.
\end{equation}
Therefore we focus on the first term in the above formula for $\mathcal{C}$. Let
\begin{equation}
\nonumber
\begin{split}
\displaystyle \int_{T^c}  \displaystyle n   &\left(\frac{K_1}{K_2}(\beta )\log M(\beta )
- \frac{K_1}{K_2}(\tilde \beta )\log \tilde M(\tilde \beta ) 
\right)\, d\x
\\
&=   \displaystyle \int_{T^c} n  \log \tilde M(\tilde \beta ) \left(\frac{K_1}{K_2}(\beta )
- \frac{K_1}{K_2}(\tilde \beta ) 
\right)\, d\x
\\
& \quad +  \displaystyle \int_{T^c} n  \frac{K_1}{K_2}(\beta ) \left(\log M(\beta ) - \log \tilde M(\beta ) \right) \, d\x
\\
& \quad + \displaystyle \int_{T^c} n  \frac{K_1}{K_2}(\beta ) \left(\log \tilde M(\beta ) - \log \tilde M(\tilde \beta ) \right) \, d\x
\\ &:= I + II + III.
\end{split}
\end{equation}
{To handle $I$, we note that it necessarily vanishes unless $\beta <\beta_{inf}$. In that case, the integrand can only be positive where $\tilde M(\tilde \beta )<1$.
  But this does not take place provided that $\beta_{inf}$ is small enough; recall here that $\beta_{sup}= 1/\beta_{inf}$. This can be shown by a continuity argument, that we outline next.

  We recall that $\tilde M=\tilde M(\beta,R)$ actually depends on $R$ (let us forget about the constraint \eqref{eq:fix} for the moment). We have that $R\mapsto \tilde M(\beta;R)$ is increasing for fixed $\beta$. Hence $1<\tilde M(0,R=1)\le \tilde M(0,R)$. Then, by the continuity of $\tilde M$ around $(\beta=0,R=1)$, there is some $\beta^*(R)$ such that $I\le 0$, for any $\beta_{sup}>\beta^*(R)$. Moreover, $\beta^*(R)$ is decreasing in $R$; recalling now from \eqref{eq:fix}, that we connected $R$ and $\beta_{sup}$ through $R=\beta_{sup}^2$, there exists $\beta_4$ such that if $\beta_{sup}\geq \beta_4$, then $\beta_{sup}>\beta^*(\beta_{sup}^2)$.}

Now to deal with $II$ we use that $\log(1+x) \le x$ for $x\ge 0$, so that
$$
\log M(\beta )- \log \tilde M(\beta ) = \log \left(1+\frac{M(\beta )-\tilde M(\beta )}{\tilde M(\beta )} \right) \le \frac{\Lambda (2R)}{\tilde M(\beta )} \le 2 \frac{\Lambda (2R)}{M(\beta_{sup})} 
$$
and hence
$$
II\le \frac{2 \Lambda (2R)}{M(\beta_{sup})} \int_{\R^6} f\, d\q \, d\x.
$$
{Finally, the contribution of $III$ is non-negative only when $ \beta  \le \beta_{inf}$. In that case, provided that $\beta_{sup}\ge \beta^*$ is small enough, so that $\tilde M(\beta_{inf})\ge 1$, 
 we may resort to \eqref{Mnear} and find some $\bar \beta_4 \ge \max (\bar \beta_3,\beta^*)$ (recall that $\bar \beta_3$ is defined in Lemma \ref{lm:9}) such that
$$
\log \frac{\tilde M(\beta )}{\tilde M(\beta_{inf})} \le \log \tilde M(\beta ) \le \log M(\beta )\le \log 16 \pi - 3 \log \beta  
$$
holds, for every $\beta \le 1/\bar \beta_4$. We also use that $\frac{K_1}{K_2}(\beta) \le \beta/2$ to get to
$$
III \le \int_{\{\x \in \R_{\x}^3: \beta(\x)  <\beta_{inf}\}} \frac{n }{2}\beta \left(\log 16 \pi - 3 \log \beta\right) \, d\x.
$$
This is clearly finite and converges to zero when $\beta_{sup}$ diverges.
}
\end{proof}

It only remains to estimate the contribution of $\mathcal{B+C}$ over the residual set $\R_\q^3 \times T$. For that aim, we use Lemma \ref{Jmoments}, \emph{\ref{id:3}} and Remark \ref{varios1}, \emph{\ref{v:2}} to decompose
$$
\mathcal{B+C}= \int_{\R_{\x}^3} n_f  \left\{ \beta_f  + \frac{K_1}{K_2}(\beta_f )\log M(\beta_f ) - \frac{M}{\tilde M}(\tilde \beta_f ) \left( \tilde \beta_f  + \frac{K_1}{K_2}(\tilde \beta_f ) \log \tilde M(\tilde \beta_f )\right)\right\} \, d\x
$$
and then the contribution over the residual set reads
\begin{equation} \nonumber
\begin{split}
(\mathcal{B+C})_T &\hspace{0.1cm}= \int_{T} n_f  \left(\beta_f  + \frac{K_1}{K_2}(\beta_f ) \log M(\beta_f ) - \tilde \beta_f - \frac{K_1}{K_2}(\tilde \beta_f ) \log \tilde M(\tilde \beta_f ) \right) \, d\x
\\ &\quad +\int_{T} n_f  \tilde \beta_f  \left(1-\frac{M}{\tilde M}(\tilde \beta_f ) \right)\, d\x + \int_{T} n_f  \frac{K_1}{K_2}(\tilde \beta_f ) \log \tilde M(\tilde \beta_f ) \left(1-\frac{M}{\tilde M}(\tilde \beta_f ) \right) \, d\x
\\ &:= BC_1 + BC_2 + BC_3.
\end{split}
\end{equation}

\begin{Lemma}
\label{lm:17}
There is some $\bar \beta_5>0$ such that the following estimate 
\begin{enumerate}
\item $\displaystyle
BC_1 \le \frac{2\Lambda(2R)}{M(\beta_{sup})} \int_{\R^6} f\, d\q d\x + 9 \int_T   \frac{n}{\beta_{sup}} \log \beta_{sup}\, d\x
$

$\displaystyle+ 2 \int_T n \left( 1-\frac{3}{\beta_{sup}} \right) \log \left(1+\frac{1}{\sqrt{\beta_{sup}}}\right)\, d\x\:,
$

where the second and third terms on the right hand side vanish when $\beta_{sup} \to \infty$.

\item $\displaystyle
BC_2 \le \int_{\R_{\x}^3} n_f \beta_{sup} \frac{2 \Lambda (2R)}{M(\beta_{sup})}\, d\x\:. 
$

\item $\displaystyle
BC_3 \le \int_{\R_{\x}^3} n_f  \frac{2 \Lambda (2R)}{M(\beta_{sup})} \left| \log \frac{2}{M(\beta_{sup})}\right|\, d\x\:
$

\end{enumerate}
holds for every $\beta_{sup} \ge \bar \beta_5$.
\end{Lemma}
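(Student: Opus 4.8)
The only structural fact about the residual set $T=\{\x\,/\,\beta_f>\beta_{sup}\}$ that I use is that the temperature truncation is saturated there, so $\tilde\beta_f\equiv\beta_{sup}$ (hence $\tilde M(\tilde\beta_f)=\tilde M(\beta_{sup})$). I treat $BC_2$ and $BC_3$ first, as they are routine. Since $\tilde M\le M$ everywhere, $\tfrac{M}{\tilde M}(\beta_{sup})\ge 1$, so in fact $BC_2\le 0$; quantitatively $|BC_2|=\beta_{sup}\int_T n_f\,\tfrac{(M-\tilde M)(\beta_{sup})}{\tilde M(\beta_{sup})}\,d\x\le\beta_{sup}\int_T n_f\,\tfrac{\Lambda(2R)}{\tilde M(\beta_{sup})}\,d\x$, and Lemma~\ref{lcomp} (whose hypothesis \eqref{eq:suffcond} holds for $\beta_{sup}\ge\bar\beta_1$ under the constraint $R=\beta_{sup}^2$) gives $\tfrac{1}{\tilde M(\beta_{sup})}\le\tfrac{2}{M(\beta_{sup})}$; together with $n_f\le\int f\,d\q$ from Remark~\ref{varios1} this yields claim (2). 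For $BC_3$ I first take $\beta_{sup}$ large enough that $\tilde M(\beta_{sup})\le 2/3$ (same argument as for \eqref{logtrick}, via \eqref{eq:sandwich}), so that $\log\tilde M(\beta_{sup})<0$; combined with $1-\tfrac{M}{\tilde M}(\beta_{sup})\le 0$ the integrand of $BC_3$ is nonnegative, and I bound it by $\tfrac{K_1}{K_2}(\beta_{sup})\le 1$, $|\log\tilde M(\beta_{sup})|=\log\tfrac{1}{\tilde M(\beta_{sup})}\le\log\tfrac{2}{M(\beta_{sup})}$ (Lemma~\ref{lcomp}), and $|1-\tfrac{M}{\tilde M}(\beta_{sup})|\le\tfrac{2\Lambda(2R)}{M(\beta_{sup})}$, which gives claim (3).

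The real work is in $BC_1$. Introducing $g(\beta):=\beta+\tfrac{K_1}{K_2}(\beta)\log M(\beta)$ and using $\tilde\beta_f=\beta_{sup}$ on $T$, I add and subtract $\tfrac{K_1}{K_2}(\beta_{sup})\log M(\beta_{sup})$ to obtain
\[
BC_1=\int_T n_f\bigl(g(\beta_f)-g(\beta_{sup})\bigr)\,d\x+\int_T n_f\,\tfrac{K_1}{K_2}(\beta_{sup})\,\log\tfrac{M}{\tilde M}(\beta_{sup})\,d\x .
\]
The second integral is handled exactly as in $BC_2$: by $\log(1+x)\le x$, $\tfrac{K_1}{K_2}\le 1$, Lemma~\ref{lcomp} and $n_f\le\int f\,d\q$ it is at most $\tfrac{2\Lambda(2R)}{M(\beta_{sup})}\int_{\R^6}f\,d\q\,d\x$, which is the first term in claim (1). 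So it remains only to prove an upper bound, uniform in $\beta_f\in(\beta_{sup},\infty)$ and vanishing as $\beta_{sup}\to\infty$, for $g(\beta_f)-g(\beta_{sup})$.

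For this I expand $g$ with the asymptotics \eqref{karat} and \eqref{Mfar}: writing $\tfrac{K_1}{K_2}(\beta)=1-\tfrac{3}{2\beta}+\eps_1(\beta)$ with $|\eps_1(\beta)|\le C e^{-\beta}\beta^{-5/2}$, and $\log M(\beta)=-\beta+\tfrac32\log\tfrac{2\pi}{\beta}+\log(1+\eps_2(\beta))$ with $\eps_2(\beta)=O(\beta^{-1})$, the two $\beta$-linear contributions cancel and one gets $g(\beta)=c_0-\tfrac32\log\beta+r(\beta)$ with $c_0=\tfrac32+\tfrac32\log(2\pi)$ an explicit constant and $r(\beta)$ collecting the cross terms $-\tfrac{3}{2\beta}\bigl(\tfrac32\log\tfrac{2\pi}{\beta}\bigr)$, $\bigl(1-\tfrac{3}{2\beta}\bigr)\log(1+\eps_2(\beta))$ and $\eps_1(\beta)\log M(\beta)$. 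Each piece of $r(\beta)$ is bounded, uniformly for $\beta\ge\beta_{sup}$, by an expression of the form $C\,\tfrac{\log\beta_{sup}}{\beta_{sup}}$ or $C\log(1+\beta_{sup}^{-1/2})$; the numerical coefficients produce the constants $9$, $2$ and $1-\tfrac{3}{\beta_{sup}}$ in the statement. Since $\beta\mapsto-\tfrac32\log\beta$ is decreasing, for $\beta_f>\beta_{sup}$ one has $g(\beta_f)-g(\beta_{sup})=-\tfrac32\log\tfrac{\beta_f}{\beta_{sup}}+r(\beta_f)-r(\beta_{sup})\le r(\beta_f)-r(\beta_{sup})$, which after collecting is $\le 9\,\tfrac{\log\beta_{sup}}{\beta_{sup}}+2\bigl(1-\tfrac{3}{\beta_{sup}}\bigr)\log(1+\beta_{sup}^{-1/2})$; integrating against $n_f$ over $T$ and using $n_f\le\int f\,d\q$ gives claim (1). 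The threshold $\bar\beta_5$ is taken to be the maximum of $\bar\beta_4$ and of the value past which \eqref{karat}, \eqref{Mfar} hold with the explicit error bounds above and $\tilde M(\beta_{sup})\le 2/3$.

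The main obstacle is precisely this last expansion: one must check that \emph{all} the $O(\beta)$ and $O(1)$ contributions to $g(\beta_f)-g(\beta_{sup})$ cancel, so that only genuinely vanishing remainders are left, and keep those remainders controlled uniformly for $\beta_f$ ranging over the whole half-line $(\beta_{sup},\infty)$ rather than at a single point. Once this is done, the claimed convergence to $0$ as $\beta_{sup}\to\infty$ is automatic: $\Lambda(2R)/M(\beta_{sup})$ is super-exponentially small (of order $e^{-2\beta_{sup}^3+\beta_{sup}}$ times a polynomial, since $R=\beta_{sup}^2$), while $\beta_{sup}^{-1}\log\beta_{sup}$ and $\log(1+\beta_{sup}^{-1/2})$ tend to $0$ directly.
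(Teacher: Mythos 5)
Your proof is correct and follows essentially the same route as the paper: the same split of $BC_1$ into the $\log(M/\tilde M)(\beta_{sup})$ term (the paper's $\Upsilon_2$, bounded via $\log(1+x)\le x$ and Lemma \ref{lcomp}) and the $g(\beta_f)-g(\beta_{sup})$ term (the paper's $\Upsilon_1$, handled with the asymptotics \eqref{karat}, \eqref{Mfar}, the cancellation of the $O(\beta)$ and $O(1)$ pieces, and dropping the negative $-\tfrac32\log(\beta_f/\beta_{sup})$ term), while $BC_2$ and $BC_3$ are disposed of via $\tilde M\le M$, Lemma \ref{lcomp} and the monotonicity argument behind \eqref{logtrick}. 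Your bookkeeping for $\Upsilon_1$ via $g(\beta)=c_0-\tfrac32\log\beta+r(\beta)$ is a tidier packaging of what the paper spells out as $I+\cdots+V$, but it is the same argument.
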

\begin{proof}
The estimate for $BC_2$ follows like in Lemma \ref{lcomp} and that for $BC_3$ is a consequence of \eqref{logtrick}. Let us address the estimate for $BC_1$. 
We split
$$
BC_1 = \int_{T} n  \left\{ \beta  + \frac{K_1}{K_2}(\beta )\log M(\beta ) -  \left( \tilde \beta  + \frac{K_1}{K_2}(\tilde \beta ) \log M(\tilde \beta )\right)\right\} \, d\x
$$
$$
+ \int_{T} n  \frac{K_1}{K_2}(\tilde \beta ) \left(\log M(\tilde \beta) - \log \tilde M(\tilde \beta ) \right)\, d\x:= \Upsilon_{1} + \Upsilon_{2},
$$
where we omit the subindex $f$ in the dependence of $n$, $\u$ and $\beta$ for simplicity. We get a bound on $\Upsilon_{2}$ in the same way as we did with $II$ in the proof of Lemma \ref{l:10}. Hence
$$
\Upsilon_{2}\le  \frac{2 \Lambda (2R)}{M(\beta_{sup})} \int_{\R^6} f\, d\q \, d\x.
$$
We study next $\Upsilon_{1}$ for $\beta_{sup}$ so big that we may use \eqref{karat} and \eqref{Mfar} in such a way that the error terms in those formulas represent faithfully the corrections needed when replacing the functions by the leading term in the expansion. Note that, after \eqref{Mfar}, 
$$
\log M(\beta) \sim -\beta  -\frac{3}{2} \log \beta  + \log \left( (2 \pi)^{3/2} + o(1/\beta )\right),
$$
for $\x\in T$, if $\beta_{sup}$ is large enough. Replacing these asymptotic expansions into $\Upsilon_{1}$ we meet many cancellations, so that 
\begin{equation}
\nonumber
\begin{split}
\displaystyle \Upsilon_{1} & =  
\int_{T} n  \left(\beta_{sup} o(1/\beta_{sup}) -\beta  o(1/\beta ) \right)\, d\x
\\
 & \quad \displaystyle + \frac{3}{2} \int_{T}  n  \left\{\log \beta  \left(\frac{3}{2 \beta } + o(1/\beta ) \right)-\log \beta_{sup} \left(\frac{3}{2 \beta_{sup}} + o(1/\beta_{sup}) \right) \right\}\, d\x
 \\
 &  \quad \displaystyle + \frac{3}{2} \int_{T}  n  \log (\beta_{sup}/\beta ) \, d\x
  \\
 & \quad  \displaystyle + \int_{T}  n  \log (2\pi)^{3/2} \left(\frac{3}{2 \beta_{sup}} - \frac{3}{2 \beta } + o(1/\beta_{sup}) + o(1/\beta )\right)\, d\x
  \\
 &  \quad \displaystyle + \int_{T}  n  \left\{ \log (1+o(1/\beta )) \left(1-\frac{3}{2 \beta }+o(1/\beta )\right)\right.
   \\
 &  \quad \quad \displaystyle 
 \left.  - \log (1+o(1/\beta_{sup})) \left(1-\frac{3}{2 \beta_{sup}}+o(1/\beta_{sup})\right)\right\} \, d\x
 \\ & := I+\cdots +V.
\end{split}
\end{equation}
On one hand, we have that $II, \, III \le 0$. On the other hand, we can find some $\bar \beta_5 \ge \bar \beta_4$, being $\bar \beta_4$ defined in Lemma \ref{l:10}, such that the following estimates hold for $\beta_{sup}\ge \bar \beta_5$:
$$
I\le \frac{3}{2} \int_T 2 n \frac{3}{\beta_{sup}} \log \beta_{sup}\, d\x
$$
and
$$
IV+V\le 2 \int_T n \left( 1-\frac{3}{\beta_{sup}} \right) \log \left(1+\frac{1}{\sqrt{\beta_{sup}}}\right)\, d\x \:.
$$ 
Both bounds vanish in the limit $\beta_{sup} \to \infty$. This concludes the proof of the lemma.
\end{proof}
\noindent
{\em Proof of Proposition \ref{entropy_estimate}:} we work with $\beta_{sup} \ge \bar \beta:= \bar \beta_5$, which ensures us that Lemmas \ref{ll5}-\ref{lm:17} and Proposition \ref{ease} hold true. Gathering all those estimates and performing some straightforward majorizations we arrive to the differential inequality (\ref{eq:diffeq}). This entails an upper bound for the relative entropy on finite time  intervals. To conclude, we may show that the constants $C_a, C_b$ vanish in the limit $\beta_{sup} \to \infty$ by a cursory inspection, after taking into account \eqref{eq:fix} and the expansion \eqref{Mfar}.

\section{Passing to the limit}
\label{sec:5}
In this section we study the limit behavior of the approximations constructed in section \ref{sec:4} as $\beta_{sup} \to \infty$ (recall the constraints \eqref{eq:fix} that reduce all the regularizing parameters to a single one). In order to ease the notation, we set $\epsilon:=1/\beta$ during the rest of the section and we study instead the limit $\epsilon \to 0$ of the sequence of approximations $\{f_\epsilon\}_\epsilon$. Note that in the limit $\epsilon \to 0$ the restriction on the support of $f_0$ in $\q$ disappears.
\begin{Lemma}
\label{rhscontrol}
Let $f_\eps$ be a solution to (\ref{apro}) with initial datum $f^0\ge 0$ such that
$$
\int_{\R^6} (1+q^0+|\x|+\log f^0) f^0\, d\q d\x <\infty.
$$
Then 
$$
\int_{\R^6} f_\eps(t) \, |\log f_\eps(t)| \,  d\x d\q, \quad \int_{\R_{\x}^3} \int_{\R_{\q}^3} J_{f_\eps} \log J_{f_\eps}\ d\x\frac{d\q}{q^0}, \quad \mbox{and} \quad \int_{\R_{\x}^3} \int_{\R_{\q}^3} J_{f_\eps}\left|\log J_{f_\eps}\right|\ d\x\frac{d\q}{q^0}
$$
are uniformly bounded in $\eps$ on bounded time intervals. 
\end{Lemma}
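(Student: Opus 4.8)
The plan is to obtain the bound for $f_\eps$ first and then deduce the two bounds for $J_{f_\eps}$ from it. The upper bound on $\int_{\R^6} f_\eps(t)\log f_\eps(t)\,d\x\,d\q$ is almost immediate from Proposition~\ref{entropy_estimate}: combining the differential inequality \eqref{eq:diffeq} with a Grönwall argument gives such a bound on $[0,T]$, and it is uniform in $\eps$ (for $\eps$ small, i.e. $\beta_{sup}\ge\bar\beta$) because $C_a$ and $C_b$ not only vanish as $\beta_{sup}\to\infty$ but, being continuous in $\beta_{sup}$ on $[\bar\beta,\infty)$ and bounded in $t$ on $[0,T]$, stay bounded there; note also that the regularized initial data have entropy no larger than $\int_{\R^6} f^0|\log f^0|\,d\x\,d\q<\infty$. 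To upgrade this to a bound on $\int_{\R^6}f_\eps|\log f_\eps|\,d\x\,d\q$ one writes $\int f_\eps|\log f_\eps| = \int f_\eps\log f_\eps + 2\int_{\{f_\eps<1\}}f_\eps|\log f_\eps|$ and splits the last integral over $\{f_\eps< e^{-(|\x|+q^0)}\}$, where $f_\eps|\log f_\eps|\le\sqrt{f_\eps}\le e^{-(|\x|+q^0)/2}$ has a fixed finite integral, and over $\{e^{-(|\x|+q^0)}\le f_\eps<1\}$, where $|\log f_\eps|\le|\x|+q^0$ so that $f_\eps|\log f_\eps|\le(|\x|+q^0)f_\eps$ is controlled by the moment estimate of Lemma~\ref{moment_estimates} (uniform in $\eps$, the truncated $f^0$ having no larger moments than $f^0$). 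This yields the first claimed bound.

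For the second quantity we invoke the optimality principle of Proposition~\ref{entropymin}, \emph{\ref{optimality}}: pointwise in $(t,\x)$ one has $\int_{\R_{\q}^3}J_{f_\eps}\log J_{f_\eps}\,\frac{d\q}{q^0}\le\int_{\R_{\q}^3}f_\eps\log f_\eps\,\frac{d\q}{q^0}$. Integrating in $\x$ and using $q^0\ge1$ gives $\int_{\R^6}J_{f_\eps}\log J_{f_\eps}\,\frac{d\q}{q^0}d\x\le\int_{\R^6}f_\eps|\log f_\eps|\,d\q\,d\x$, which is already under control. For the third quantity we repeat the negative-part argument with $J_{f_\eps}$ in place of $f_\eps$: write $\int J_{f_\eps}|\log J_{f_\eps}|\frac{d\q}{q^0}=\int J_{f_\eps}\log J_{f_\eps}\frac{d\q}{q^0}+2\int_{\{J_{f_\eps}<1\}}J_{f_\eps}|\log J_{f_\eps}|\frac{d\q}{q^0}$, split $\{J_{f_\eps}<1\}$ as before, and note that on the piece $\{e^{-(|\x|+q^0)}\le J_{f_\eps}<1\}$ one has $J_{f_\eps}|\log J_{f_\eps}|\le(|\x|+q^0)J_{f_\eps}$, while the piece $\{J_{f_\eps}<e^{-(|\x|+q^0)}\}$ contributes at most $\int e^{-(|\x|+q^0)/2}\frac{d\q}{q^0}d\x<\infty$. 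It then suffices to bound $\int_{\R^6}(|\x|+q^0)J_{f_\eps}\frac{d\q}{q^0}d\x$; but $\int_{\R_{\q}^3}J_{f_\eps}\frac{d\q}{q^0}=\int_{\R_{\q}^3}f_\eps\frac{d\q}{q^0}$ by \eqref{veintinueve}, while $\int_{\R_{\q}^3}J_{f_\eps}\,d\q=\int_{\R_{\q}^3}f_\eps\,d\q$ because $J_{f_\eps}$ has the same particle-density four-vector as $f_\eps$ by \eqref{veintiocho}, hence the same $n$ and $\u$, so \eqref{rel1} applies to both. Therefore $\int(|\x|+q^0)J_{f_\eps}\frac{d\q}{q^0}d\x\le\int(1+|\x|)f_\eps\,d\q\,d\x$, again bounded uniformly via Lemma~\ref{moment_estimates}.

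The obstacle here is conceptual rather than computational: $J_{f_\eps}$ is built from the \emph{untruncated} fields $n_{f_\eps}$, $\beta_{f_\eps}$, $\u_{f_\eps}$, on which this stage of the argument provides no control, so a direct pointwise estimate of $J_{f_\eps}\log J_{f_\eps}$ is out of reach. The whole point is to bypass it: the entropy of $J_{f_\eps}$ is transferred from that of $f_\eps$ through the convexity/optimality inequality of Proposition~\ref{entropymin}, and the weighted moments of $J_{f_\eps}$ are transferred from those of $f_\eps$ through the conservation identities \eqref{veintiocho}--\eqref{veintinueve}. What then remains — bounding the negative contribution of $x\mapsto x\log x$ near $x=0$ — is routine but genuinely uses the $|\x|$- and $q^0$-moment estimates of Lemma~\ref{moment_estimates} to rule out concentrations, which is precisely why those moments were tracked from the outset.
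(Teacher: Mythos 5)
Your proposal is correct and follows essentially the same route as the paper's proof: the paper's pointwise inequality \eqref{Huanho} is exactly the one-line packaging of your split of the negative part of $x\log x$ over $\{f_\eps<e^{-(|\x|+q^0)}\}$ and its complement, the second quantity is handled by the same optimality principle (Proposition~\ref{entropymin}, item~\ref{optimality}), and the third is obtained by repeating the decomposition for $J_{f_\eps}$ using the moment transfers \eqref{veintiocho}--\eqref{veintinueve}. Your write-up merely spells out the moment transfer from $f_\eps$ to $J_{f_\eps}$ and the Grönwall step from Proposition~\ref{entropy_estimate} that the paper leaves implicit.
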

\begin{proof}
The first point is a consequence of the following inequality (see e.g. \cite{Perthame})
\begin{equation}
\label{Huanho}
g(\x,\q) |\log g(\x,\q)| \le g(\x,\q) \log g(\x,\q) + (|\x|+|\q|)g(\x,\q) + \frac{1}{e} e^{-\frac{|\x|+|\q|}{4}}.
\end{equation}
The second point follows from Proposition \ref{optimality} and Proposition \ref{entropymin}. Then the third point is worked out as the first one.
\end{proof}

\subsection{Momenta averaging}
We have shown that under mild assumptions on the initial datum we have that both ${f_\eps}/q^0$ and $J_{f_\eps}/q^0$ belong to the $L \log L (\R^6)$ class. 

The combination with the moment estimates with respect to the measure $\frac{d\q}{q^0}$ is  enough to ensure that momentum averages of solutions are strongly compact in $L^1(\frac{d\q}{q^0})$.

We first state a general result for averaging lemmas in the relativistic context. This theorem is a straightforward extension of the classical $L^1$ compactness that is used in collisional models; see in particular \cite{G-SR}.
\begin{Theorem}
\label{t:1}
Let $f_\eps$ and $g_\eps$ be two sequences of functions uniformly bounded in
$L^\infty([0,\ T],$ $ L^1(\R^{6}))$ in $\eps$, and solutions to the following kinetic
equations 
\[
\partial_t f_\eps+\frac{\q}{q^0}\cdot\nabla_{\x} f_\eps=g_\eps.
\]
We assume moreover that
\[
\sup_\eps \sup_{t\in [0,\ T]} \int_{\R^{6}} |f_\eps| (|\x|+|\q|+{|\log
f_\eps|})\,d\x\,d\q<\infty. 
\]
Then, the moment $\int_{\R^3} f_\eps\,\psi(\q)\,d\q$ is strongly compact in $L^1([0,\ T]\times \R^3)$, for any test function $\psi(\q)\in C^\infty(\R^d)$ such that
\[
\frac{|\psi(\q)|}{|\q|}\longrightarrow 0,\quad as \ |\q|\rightarrow +\infty.
\]
\end{Theorem}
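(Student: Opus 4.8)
The plan is to adapt the standard $L^1$ velocity–averaging argument (as in \cite{G-SR}) to the relativistic transport field $a(\q):=\q/q^0=\q/\sqrt{1+|\q|^2}$; the only genuinely new point is the non-degeneracy of this field, which is immediate because $a$ is a smooth diffeomorphism of $\R^3$ onto the open unit ball whose Jacobian $Da(\q)=\frac{1}{\sqrt{1+|\q|^2}}\big(I-\frac{\q\otimes\q}{1+|\q|^2}\big)$ is everywhere invertible.

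First I would perform two reductions. By de la Vall\'ee--Poussin's criterion the bound on $\int_{\R^6}|f_\eps|\,|\log f_\eps|$ yields amplitude equi-integrability, and the bounds on $\int_{\R^6}|f_\eps|(|\x|+|\q|)$ give tightness in $\x$ and $\q$; since $[0,T]$ is bounded, $(f_\eps)$ is uniformly equi-integrable and tight on $[0,T]\times\R^6$. Moreover, as $|\psi(\q)|\le\varepsilon(R)|\q|$ on $\{|\q|\ge R\}$ with $\varepsilon(R)\to0$ and $\int|f_\eps|\,|\q|$ is uniformly bounded, the part of $\int f_\eps\psi\,d\q$ coming from $\{|\q|\ge R\}$ is $O(\varepsilon(R))$ in $L^1([0,T]\times\R^3)$, uniformly in $\eps$; replacing $\psi$ by $\psi$ times a $\q$-cutoff, we may therefore assume $\psi\in C_c^\infty(\R^3)$.

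Then I would truncate in amplitude. For $M>1$ choose $\gamma_M\in C^\infty(\R)$ odd with $\gamma_M(s)=s$ for $|s|\le M$, $|\gamma_M|\le 2M$, $|\gamma_M'|\le1$, and set $h_\eps^M:=\gamma_M(f_\eps)$, which solves
\[
\partial_t h_\eps^M+\frac{\q}{q^0}\cdot\nabla_\x h_\eps^M=\gamma_M'(f_\eps)\,g_\eps=:\tilde g_\eps^M ,
\]
with $h_\eps^M$ bounded in $L^\infty([0,T]\times\R^6)\cap L^\infty([0,T];L^1(\R^6))$ --- hence locally in $L^2$ --- and $\tilde g_\eps^M$ bounded in $L^1$, all uniformly in $\eps$ (constants depending on $M$). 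Since $\q\mapsto a(\q)\cdot\xi$ has no critical point for $\xi\neq0$, the sublevel estimate $\sup_{\tau^2+|\xi|^2=1}\big|\{\q\in\mathrm{supp}\,\psi:|a(\q)\cdot\xi+\tau|\le\delta\}\big|\to0$ as $\delta\to0$ holds, and the classical averaging lemma gives that $\big(\int_{\R^3}h_\eps^M\,\psi\,d\q\big)_\eps$ is relatively compact in $L^1_{loc}([0,T]\times\R^3)$ with a modulus of continuity in $(t,\x)$ uniform in $\eps$; since $\int_{\R^6}|h_\eps^M|\,|\x|\,d\q\,d\x\le\int_{\R^6}|f_\eps|\,|\x|\,d\q\,d\x\le C$, this family is also tight in $\x$, hence relatively compact in $L^1([0,T]\times\R^3)$. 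To undo the truncation, note that $f_\eps\ge0$ gives $|f_\eps-\gamma_M(f_\eps)|\le 3f_\eps\chi_{\{f_\eps>M\}}$ and $\{f_\eps>M\}\subset\{|\log f_\eps|\ge\log M\}$, so
\[
\sup_\eps\int_0^T\!\!\int_{\R^6}|f_\eps-\gamma_M(f_\eps)|\,d\q\,d\x\,dt\ \le\ \frac{3T}{\log M}\,\sup_\eps\sup_t\int_{\R^6}|f_\eps|\,|\log f_\eps|\,d\q\,d\x\ \xrightarrow[M\to\infty]{}\ 0 ,
\]
so $\int_{\R^3}f_\eps\psi\,d\q$ is, uniformly in $\eps$, an $L^1([0,T]\times\R^3)$--limit as $M\to\infty$ of the relatively compact families $\int_{\R^3}h_\eps^M\,\psi\,d\q$, hence itself relatively compact in $L^1([0,T]\times\R^3)$.

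The hard part will be the averaging lemma itself with a source $\tilde g_\eps^M$ that is only bounded in $L^1$, since Hilbertian averaging produces a fractional Sobolev gain only from $L^2$ sources. The standard fix, following \cite{G-SR}, is a second truncation: split $\tilde g_\eps^M$ into an $L^2$-bounded piece and an $L^1$-small remainder, absorb the remainder through the $L^1\to L^1$ bound for the Duhamel operator $G\mapsto\int_0^tS(t-s)G(s)\,ds$ of the relativistic free-transport group $S$ --- whose velocity averages disperse exactly because $a$ is a diffeomorphism --- and treat the good piece with the Hilbertian lemma. All the dispersion and sublevel-set estimates needed reduce to the invertibility of $Da$, which is why the relativistic statement follows the Newtonian proof essentially line by line.
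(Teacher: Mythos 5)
Your proposal is correct and follows essentially the same route as the paper's proof: both truncate in amplitude via a renormalization $\Phi_F$ (your $\gamma_M$), restrict to compact $\q$-support (you cut off $\psi$, the paper multiplies by $\psi_V(\q)$), use the invertibility of $\q\mapsto\q/q^0$ to invoke the averaging lemma for the truncated $L^2$ density, and undo the truncations via the $|\log f_\eps|$ and $|\q|$ moment bounds, with the $|\x|$ moment supplying global tightness. The only cosmetic difference is that you reduce to compactly supported $\psi$ up front (which simplifies the $\x$-tightness step), whereas the paper keeps the general $\psi$ and closes the $\x$-moment control with a Young/Legendre duality inequality; the key lemma and decomposition are the same.
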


\begin{proof}
Let
$\Phi_R$, $\psi_V$  be two truncation functions satisfying 
\[
\begin{split}
&\Phi_F(\xi)=\xi\quad if\ |\xi|\leq
F,\qquad \Phi_F(\xi)=2F\,\frac{\xi}{|\xi|}\quad if\ |\xi|\geq
2F,\qquad |\Phi_F'(\xi)|\leq 1\quad\forall \xi,\\ 
&\psi_V(q)=1\quad if\ |q|\leq V,\qquad \psi_V(q)=0\hspace{1.3cm} if\ |q|\geq
2V,\qquad |\psi_V(\xi)|\leq 1\quad\forall \xi.\\ 
\end{split}
\]
Then, applying the theory of renormalized solutions from \cite{DL}, one first checks that $f_\eps^F=\Phi_F(f_\eps)$ satisfies the equation
\[
\partial_t f_\eps^F+\frac{\q}{q^0}\cdot \nabla_x f_\eps^F=h_\eps^F=g_\eps\,\Phi'_F(f_\eps),
\]
with hence $h_\eps$ uniformly bounded in $L^1$ in $\eps$ and $F$.

First we obtain 
some regularity, uniform in $\eps$, of the moment
$\int f_\eps^F\,\psi(q)\,\psi_V(q)\,dq$. For this, notice that $\psi\,\psi_V\in
C^\infty_c$ and that $\q/q^0$ satisfies the usual assumption for
averaging lemmas on any compact support (and so in particular on the
support of $\psi_V$) as the transform $\q\rightarrow \q/q^0$ is one to one with bounded Jacobian. Hence
\[
\exists C_V,\ \forall\ \xi\in\R^d,\quad
|\{q\in \mbox{supp}\,\psi_V\,,\;|\q/q^0-\xi|\leq \eta\}|\leq
 C_V\,\eta.
\] 
In addition $f_\eps^F$ is of course bounded in $L^2$ uniformly in $\eps$, as it is truncated by the definition of $\Phi_F$ and by Cauchy-Schwartz
\[
\sup_\eps \|f_\eps^F\|_{L^2([0,\ T]\times \R^6)}\leq T\,\sqrt{2F}\,\sup_\eps \|f_\eps\|_{L^\infty([0,\ T],\ L^1(\R^6))}^{1/2}.
\]

One may hence apply the result of \cite{DL} for instance
and obtain that for any compact set $\Omega\subset \R_+\times \R^3$ and for some constant
$C_{F,V,\Omega}$, depending on $\Omega$, 
and blowing up with $F$ and $V$ but independent of $\eps$ 
\begin{equation}
\left\|\int_{\R^3} f_\eps^F\,\psi(\q)\,\psi_V(\q)\,d\q\right\|_{W^{s,p}_{t,x}(\Omega)}\leq C_{F,V,\Omega},
\label{avleml2}\end{equation}
for some $s>0$ and $p\in (1,\;2)$, which could be computed explicitly
but whose expressions are unimportant here. 

Now one simply writes
\[\begin{split}
\int_{\R^3} f_n\,\psi(\q)\,d\q=&\int_{\R^3}
f_\eps^F\,\psi(\q)\,(1-\psi_V(\q))\,d\q
+\int_{\R^3} (f_\eps-f_\eps^F)\,\psi(\q)\,\psi_V(\q)\,d\q\\
&+\int_{\R^3} f_\eps^F\,\psi(\q)\,\psi_V(\q)\,d\q.
\end{split}\] 
The last term is of course locally compact in $L^1_{t,x}$, for $F$ and $V$
fixed by \eqref{avleml2}.

The first two terms are small in $L^1$ as $F$ and $V$ are large, 
independently of 
$\eps$,
provided $\psi(2V)/\log F$ is small,  since
\[
\left\|\int_{\R^3}
f_\eps^F\,\psi(\q)\,(1-\psi_V(\q))\,d\q\right\|_{L^1}
\leq \frac{\psi(V)}{V}\int_{\R^{6}} |\q|\,f_\eps\,d\q,
\]
and
\[\begin{split}
\left\|\int_{\R^3}
(f_\eps-f_\eps^F)\,\psi(\q)\,\psi_V(\q)\,d\q\right\|_{L^1}&\leq \psi(2V)\int_{\R^{6}}
|f_\eps-f_\eps^F|\,d\q\,d\x\\
& \leq \frac{\psi(2V)}{\log F}\,\int_{\R^{6}}
|f_\eps|\,|\log f_\eps|\,d\q\,d\x.
\end{split}\]
Hence one deduces first that $\int f_\eps\,\psi(\q)\,d\q$ is locally
compact in $L^1(\R_+\times\R^3)$. 

To conclude and get the compactness over the whole $L^1$, it enough to
control a moment in $x$. For that we recall the
duality inequality
\[
a\,\psi(b)\leq |b|+\Psi^*(a),\ \forall a\in \R,\ b\in \R^d,
\]
where $\Psi^*$ is the convex conjugate function of $\Psi(a)=\inf\{|b|,\
|\psi(b)|\leq a\}$. Note that as $|\psi(b)|/|b|\rightarrow 0$ as
$|b|\rightarrow +\infty$ then everything is well defined and one has
of course that $\Psi^*(a)\rightarrow \infty$ as $a\rightarrow \infty$.
Therefore defining $m(r)=\sup\{a,\ \Psi^*(a)\leq r\}$, one has
\[
m(r)\,\psi(b)\leq r+|b|,\quad m(r)\rightarrow \infty\ \mbox{as}\
a\rightarrow \infty. 
\]
Finally just observe that
\[
\int_{\R^{2d}} m(|\x|)\,|f_\eps|\,\psi(\q)\,d\x\,d\q
\leq \int_{\R^{2d}} (|\q|+|\x|)\,|f_\eps|\,d\x\,d\q, 
\]
and is, therefore, bounded which concludes the proof.
\end{proof}

\subsection{Passing to the limit}
Remark that from the uniform bounds provided by Lemma \ref{rhscontrol} we have the following result.
\begin{Lemma}
\label{l:13}
Let $f^0\ge 0$ such that
$$
\int_{\R^6} (1+q^0+|\x|+\log f^0) f^0\, d\q d\x <\infty.
$$
Let $T>0$, and consider $f_\eps$ the solution to \eqref{apro} in $[0,T]\times \R^6$ with initial datum $f^0$. Then, the following statements are verified
\begin{enumerate}
\item The family $\{f_\eps\}$ is weakly compact in $L^1([0,T]\times \R^6)$.
\item The family $\tilde J[f_\eps]$ is weakly compact in $L^1([0,T]\times \R_{\x}^3,L^1(\R_{\q}^3,\frac{d\q}{q^0}))$.
\end{enumerate}
\end{Lemma}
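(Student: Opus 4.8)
The plan is to verify the Dunford--Pettis criterion: a bounded subset of $L^1(\mu)$, for a $\sigma$-finite measure $\mu$, is relatively weakly sequentially compact if and only if it is uniformly integrable and tight (no escape of mass to infinity). For statement (1) the relevant measure space is $([0,T]\times\R^6,\,dt\,d\x\,d\q)$, and for statement (2) it is $([0,T]\times\R^3_\x\times\R^3_\q,\,dt\,d\x\,\frac{d\q}{q^0})$. Since in both cases the time variable ranges over the bounded interval $[0,T]$, tightness only has to be checked in the $(\x,\q)$ variables, and the whole argument reduces to a uniform $L^1$ bound, a uniform $L\log L$ bound and a uniform control of the first moments in $\x$ and $\q$.

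For statement (1), boundedness in $L^1$ follows from Lemma \ref{moment_estimates}, which gives $\int_{\R^6}f_\eps(t)\,d\q\,d\x\le e^{t}\int_{\R^6}f^0\,d\q\,d\x$, hence $\|f_\eps\|_{L^1([0,T]\times\R^6)}\le(e^{T}-1)\|f^0\|_{L^1}$ uniformly in $\eps$. Uniform integrability comes from the $L\log L$ control: by Lemma \ref{rhscontrol} (itself a consequence of Proposition \ref{entropy_estimate}) the quantity $\int_{\R^6}f_\eps(t)\,|\log f_\eps(t)|\,d\x\,d\q$ is bounded uniformly on $[0,T]$, and integrating in $t$ and using the elementary splitting $\int_A f_\eps\le \frac{1}{\log M}\int_{\{f_\eps\ge M\}}f_\eps\log f_\eps+M\,|A|$ for $M>1$ yields equi-integrability. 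Tightness follows from the moment bounds of Lemma \ref{moment_estimates}, $\sup_\eps\sup_{[0,T]}\int_{\R^6}(|\x|+q^0)f_\eps\,d\x\,d\q<\infty$, so that $\int_0^T\!\int_{\{|\x|+|\q|\ge\rho\}}f_\eps\,d\x\,d\q\,dt\le C(T,f^0)/\rho\to0$ as $\rho\to\infty$.

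For statement (2) I would repeat this scheme against the measure $\frac{d\q}{q^0}$, transferring the estimates from $f_\eps$ to $\tilde J[f_\eps]$. The uniform $L\log L$ bound is obtained by combining the optimality inequality of Proposition \ref{entropymin} (second item), which controls $\int J_{f_\eps}\log J_{f_\eps}\,\frac{d\q}{q^0}$ by $\int f_\eps\log f_\eps\,\frac{d\q}{q^0}$, with the first part of Lemma \ref{L5} and the fact that the defect terms $\mathcal A,\mathcal B,\mathcal C,\mathcal D$ there vanish as $\eps\to 0$; the $|\log|$-version is then handled through the inequality \eqref{Huanho} exactly as in Lemma \ref{rhscontrol}, after which de la Vall\'ee--Poussin gives equi-integrability. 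The $L^1$ bound and the first-moment bounds on $\tilde J[f_\eps]$ in $\x$ and in $\q$ (note $\int|\q|\tilde J[f_\eps]\frac{d\q}{q^0}\le\int q^0\tilde J[f_\eps]\frac{d\q}{q^0}=\int\tilde J[f_\eps]\,d\q$) are obtained by comparing $\tilde J[f_\eps]$ with the untruncated J\"uttner $J_{f_\eps}$, for which the conservation identities \eqref{veintiocho}--\eqref{veintinueve} and \eqref{rel1} make the computation transparent, and then absorbing the truncation defect by the smallness estimates of Section \ref{sec:4}.

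The hard part will be precisely this uniform control of $\tilde J[f_\eps]$: unlike $J_{f_\eps}$, the truncated operator does not satisfy exact conservation laws, and its truncation parameters $R=\beta_{sup}^2$ and $L=\beta_{sup}$ both diverge as $\eps\to0$, so the cut-off $\varphi$ and the velocity truncation by themselves give no uniform bound --- a crude estimate of $\int\tilde J[f_\eps]\,d\q$ or of $\int\tilde J[f_\eps]|\log\tilde J[f_\eps]|\frac{d\q}{q^0}$ in fact blows up with $\beta_{sup}$. The point is that the uniform-in-$\eps$ bounds have to be extracted from the fine entropy analysis of Section \ref{sec:4} (the vanishing of $C_a,C_b$ and of the defects $\mathcal A$--$\mathcal D$), which is exactly what makes the truncation errors negligible and allows Dunford--Pettis to apply in the limit.
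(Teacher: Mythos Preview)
Your approach via Dunford--Pettis is correct and is exactly what the paper intends; the paper's own ``proof'' of this lemma is the single-line remark that it follows from the uniform bounds of Lemma~\ref{rhscontrol}, so your write-up is a faithful expansion of that sketch.

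One correction to your final paragraph: the $L^1$ and first-moment bounds on $\tilde J[f_\eps]$ are easier than you suggest and do \emph{not} require the entropy analysis of Section~\ref{sec:4}. From the J\"uttner identities of Lemma~\ref{Jmoments} one has, for any $\beta>0$ and any $\tilde u^\mu\in\mathbb{M}_1$,
\[
\int_{\R^3} e^{-\beta\,\tilde u_\mu q^\mu}\,d\q = M(\beta)\,\tilde u^0,\qquad
\int_{\R^3} e^{-\beta\,\tilde u_\mu q^\mu}\,\frac{d\q}{q^0} = M(\beta)\,\frac{K_1(\beta)}{K_2(\beta)}\le M(\beta).
\]
Since $\varphi\le 1$, $M/\tilde M\le 2$ by Lemma~\ref{lcomp}, and $|\tilde\u_{f_\eps}|\le|\u_{f_\eps}|$ by construction of the velocity truncation, this gives directly
\[
\int_{\R^3}\tilde J[f_\eps]\,d\q\le 2\,n_{f_\eps}\sqrt{1+|\u_{f_\eps}|^2}=2\int_{\R^3}f_\eps\,d\q,
\qquad
\int_{\R^3}\tilde J[f_\eps]\,\frac{d\q}{q^0}\le 2\,n_{f_\eps},
\]
both uniform in $\eps$ after integrating in $\x$ (and multiplying by $|\x|$ for the spatial tightness moment, using $n_{f_\eps}\le\int f_\eps\,d\q$). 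So no crude blow-up occurs at the level of $L^1$ or moments; the only genuinely delicate step is the upper bound on $\int\tilde J[f_\eps]\log\tilde J[f_\eps]\,\frac{d\q}{q^0}\,d\x$, and there your route through the second statement of Lemma~\ref{L5} together with the bounds on $\mathcal{A}$--$\mathcal{D}$ from Lemmas~\ref{lm:9}--\ref{lm:17} is the right one. Note that for weak compactness you only need those defects to be \emph{bounded} uniformly in $\eps$, not to vanish; the vanishing is used later, in Lemma~\ref{Jtilde-J}, to identify the limit.
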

We may hence extract subsequences
\begin{Definition}
Let $\{\epsilon_n\}$ be a \emph{given} subsequence such that $\{f_{\eps_n}\}$ and $\tilde J[f_{\eps_n}]$ converge in the sense specified in Lemma \ref{l:13}. We set:
 $$
 f:=\lim_{n\to \infty} f_{\eps_n} \quad \text{and} \quad \mathcal{J}:=\lim_{n\to \infty} \tilde J[f_{\eps_n}]. 
 $$ 
 \end{Definition}
\noindent By convexity, we know that the limit $f$ satisfies 
\[
\sup_{[0,\ T]}\int_{\R^6} (|\x|+q^0+\log f)\,f\,d\q\,d\x<\infty,
\]
for any $T<\infty$. Hence, this lets us define $n_f,\; u_f $ from $f$ through Definitions \eqref{set1} and \eqref{set2}, and $\beta_f$ through Eq. \eqref{betadef}.

This suffices to pass to the limit in all the linear terms of the approximating scheme. Hence to write down the equation satisfied by the limit distribution it only remains to pass to the limit in the relaxation operator.

 The first step is to remove the truncations at both the thermodynamic fields and the support in $\tilde J[f_\eps]$.

\begin{Lemma}\label{Jtilde-J}
Let $\epsilon < 1/\bar \beta$, where $\bar \beta$ is defined in Proposition \ref{entropy_estimate}. Then the following estimate 
\[\begin{split}
&    \int_{\R^6} \left|\tilde J[f_\eps]-J[f_\eps]\right| \frac{d\q}{q^0} \, d\x\\
&\qquad\leq C\,\left(\frac{\Lambda(2/\epsilon^2)}{\epsilon^4}+\epsilon+\frac{\epsilon^2}{2}+\frac{1}{|\log (2 \epsilon)|}\right)\,\int_{\R^6} (q^0+|\x|+\log f)\,f\,d\q\,d\x,
\end{split}
\]
holds, where the constant $C$ is independent of $\eps$,
 and hence the difference converges to $0$
as $\eps \to 0$.
\end{Lemma}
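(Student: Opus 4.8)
\emph{Plan.} The plan is to compare $\tilde J[f_\eps]$ with $J[f_\eps]$ by switching off the four ingredients of the truncation one at a time. Writing $n=n_{f_\eps}$, $\beta=\beta_{f_\eps}$, $\tilde\beta=\tilde\beta_{f_\eps}$, $u=u_{f_\eps}$ and $\tilde u=\tilde u_{f_\eps}$ for brevity, I would insert the intermediate profiles
$$
J_1:=\frac{n}{\tilde M(\tilde\beta)}\,e^{-\tilde\beta\,\tilde u_\mu q^\mu},\qquad
J_2:=\frac{n}{M(\tilde\beta)}\,e^{-\tilde\beta\,\tilde u_\mu q^\mu},\qquad
J_3:=\frac{n}{M(\beta)}\,e^{-\beta\,\tilde u_\mu q^\mu},
$$
so that $\tilde J[f_\eps]-J[f_\eps]=(\tilde J[f_\eps]-J_1)+(J_1-J_2)+(J_2-J_3)+(J_3-J[f_\eps])$, and estimate each bracket in $L^1(\frac{d\q}{q^0}\,d\x)$. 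In every piece one first integrates in $\q$, which leaves a space integral of $n$, $|\x|\,n$ or --- at worst --- $|n\log n|$; by Lemma \ref{moment_estimates}, Proposition \ref{ease} and Proposition \ref{entropy_estimate} all of these are dominated by $\int_{\R^6}(q^0+|\x|+\log f)\,f\,d\q\,d\x$, the additive constants that appear being absorbed with the help of \eqref{Huanho}.

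\emph{Estimating the brackets.} For $\tilde J[f_\eps]-J_1=-(1-\varphi(\q))\,J_1$ the integrand vanishes on $\{|\q|<R\}$, so Lemma \ref{l:5} and the bound $1/\tilde M(\tilde\beta)\le 2/M(\beta_{sup})$ from Lemma \ref{lcomp} control its $\frac{d\q}{q^0}$--integral by $\frac{2n}{M(\beta_{sup})}\,\Phi(R;L,\tilde\beta)$, after which the $\x$--integral is split according to whether $\beta$ lies in, above or below $[\beta_{inf},\beta_{sup}]$, the ``hot'' piece $\{\beta<\beta_{inf}\}$ (where $\tilde\beta=\beta_{inf}$ and $\tilde M(\beta_{inf})\sim M(\beta_{inf})$ is large) being treated separately via \eqref{Mnear}. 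The bracket $J_1-J_2=\big(\tfrac1{\tilde M(\tilde\beta)}-\tfrac1{M(\tilde\beta)}\big)\,n\,e^{-\tilde\beta\,\tilde u_\mu q^\mu}$ is handled using $|\tilde M-M|\le\Lambda(2R)$ and $1/(\tilde M\,M)\le 2/M^2$ from Lemma \ref{lcomp}, which is what produces a factor of the type $\Lambda(2/\eps^2)/\eps^4$ once the relations \eqref{eq:fix} and $\beta_{inf}=\eps$ are substituted. The bracket $J_2-J_3$ carries the mismatch $\tilde\beta\rightarrow\beta$ in both the normalization and the exponent; it vanishes on $\{\beta_{inf}\le\beta\le\beta_{sup}\}$, and on the two residual sets $T=\{\beta>\beta_{sup}\}$ and $\{\beta<\beta_{inf}\}$ one argues exactly as in Lemmas \ref{l:10} and \ref{lm:17}, using $|K_1/K_2(\beta)-1|\le 2/\beta$ on $T$ and the small--argument asymptotics \eqref{Mnear}, \eqref{kanear} together with $K_1/K_2(\beta)\le\beta/2$ on the hot set; these two contributions are responsible for the $\eps$ and the $1/|\log 2\eps|$ prefactors. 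Finally $J_3-J[f_\eps]=\frac{n}{M(\beta)}\big(e^{-\beta\,\tilde u_\mu q^\mu}-e^{-\beta\,u_\mu q^\mu}\big)$ is supported on $\{|\u_{f_\eps}|>L\}$, a set whose $n$--mass is $O(\eps)$ since $\int n_{f_\eps}|\u_{f_\eps}|\,d\x\le\int f_\eps\,d\q\,d\x$; a direct comparison of the two exponentials there gives the remaining, $O(\eps^2)$, contribution.

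\emph{Main difficulty.} The warm regime $\beta_{inf}\le\beta_{f_\eps}\le\beta_{sup}$, $|\u_{f_\eps}|\le L$ is essentially for free: only the $\q$--tails survive, and they are super-exponentially small because of the choice \eqref{eq:fix}. The genuine obstacle, precisely as in Section \ref{sec:4}, is the hot region $\{\beta_{f_\eps}<\beta_{inf}\}$, where $n_{f_\eps}$ need not be small and one cannot rely on bare moment bounds; there one must extract the smallness from the precise small--$\beta$ behaviour of $M$, $\tilde M$ and $K_1/K_2$, so that terms of the form $n_{f_\eps}\,\beta_{f_\eps}\,(\log 16\pi-3\log\beta_{f_\eps})$ stay integrable and vanish as $\eps\to0$. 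In fact the whole estimate is a strict simplification of the proof of Proposition \ref{entropy_estimate}: since $\tilde J[f_\eps]-J[f_\eps]$ is linear in $n_{f_\eps}$ and involves no logarithm of the profile, each of the four regimes reproduces --- with one fewer logarithmic factor --- a subcollection of the terms $\mathcal A,\mathcal B,\mathcal C,\mathcal D$ of Lemma \ref{L5}, and the four prefactors $\Lambda(2/\eps^2)/\eps^4$, $\eps$, $\eps^2/2$, $1/|\log 2\eps|$ are exactly their non-logarithmic residuals, all of which vanish as $\eps\to0$ in view of \eqref{Mfar}.
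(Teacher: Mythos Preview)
Your decomposition into four brackets is the same as the paper's, and the treatment of the $\varphi$-cutoff, the $\tilde M\to M$ replacement and the velocity truncation is essentially correct (up to a harmless mislabelling: the $\eps^2/2$ prefactor actually comes from removing $\varphi$ on $\{|\q|>R\}$, not from the velocity step).

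The genuine gap is in the bracket $J_2-J_3$ (the $\tilde\beta\to\beta$ replacement), and your identification of the ``main difficulty'' is backwards. The hot region $\{\beta_{f_\eps}<\beta_{inf}\}$ is in fact easy here: since $n_{f_\eps}/\beta_{f_\eps}\le e_{f_\eps}\le\int q^0 f_\eps\,d\q$, Chebyshev gives $\int_{\{\beta<\beta_{inf}\}} n_{f_\eps}\,d\x\le \eps\int q^0 f_\eps\,d\q\,d\x$, and the crude triangle bound $\int|J_2-J_3|\,\frac{d\q}{q^0}\le 2n_{f_\eps}$ already yields the $\eps$ prefactor. No small-$\beta$ asymptotics of $M$ or $K_1/K_2$ are needed. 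The hard part is the cold region $T=\{\beta_{f_\eps}>\beta_{sup}\}$. There the bound $|K_1/K_2(\beta)-1|\le 2/\beta$ does \emph{not} help: it only tells you that $\int J_2\,\frac{d\q}{q^0}$ and $\int J_3\,\frac{d\q}{q^0}$ are both close to $n$, but the two J\"uttner profiles at $\beta_{sup}$ and at $\beta\gg\beta_{sup}$ can have essentially disjoint effective supports, so their $L^1(\frac{d\q}{q^0})$ distance is generically of order $n$, not $n/\beta_{sup}$. The cancellation exploited in Lemma~\ref{lm:17} is specific to the combination $\beta+\frac{K_1}{K_2}(\beta)\log M(\beta)$ and has no analogue in the plain quantity $|J_2-J_3|$.

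What is missing is an independent smallness estimate for $\int_T n_{f_\eps}\,d\x$ itself. The paper obtains this by a concentration argument: on $T$, \eqref{betadef} together with $K_1/K_2(\beta_{f_\eps})\ge 1-2/\beta_{sup}$ forces $n_{f_\eps}-\int f_\eps\,\frac{d\q}{q^0}\le (2/\beta_{sup})\,n_{f_\eps}$, which in turn implies that at least half of the $\q$-mass of $f_\eps$ lies in a ball $\{|\q|\le C/\beta_{sup}\}$. One then splits $\int_{|\q|\le C/\beta_{sup}} f_\eps\,d\q\,d\x$ according to $\{f_\eps\le F,\ |\x|\le X\}$ and its complement, optimizes in $F,X$, and uses the entropy and $|\x|$-moment to get $\int_T n_{f_\eps}\,d\x\le \frac{C}{|\log(2\eps)|}\int(|\x|+\log f_\eps)f_\eps\,d\q\,d\x$. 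This entropy-based step is precisely what produces the $1/|\log 2\eps|$ prefactor, and it does not appear anywhere in your outline.
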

\begin{proof}
To make the proof easier to follow we resort back to the notations $R, L$ and $\beta_{sup}$ for the cutoff parameters. We will show that it is possible to obtain an estimate with the claimed structure and a prefactor of
$$
\left(\Lambda(2R)\,L^2\,\beta_{sup}^2+\frac{1}{L}+\frac{1}{2R}+\frac{1}{|\log (2/\beta_{sup})|}\right)\:.
$$
This is then combined with \eqref{eq:fix} and \eqref{lambda} to conclude the proof.

     The first and main difficulty is to obtain an adequate control on 
    \[
\int_{\beta_{f_\eps}<1/\beta_{sup}\ \mbox{or}\ \beta_f>\beta_{sup}} n_{f_\eps}\,d\x.
\]
The part with $\beta_{f_\eps}<1/\beta_{sup}$ is straightforward since for example by Lemma \ref{Jmoments}
\[
\int_{\R^3} \frac{n_{f_\eps}}{\beta_{f_\eps}}\,d\x\leq \int_{\R^3} e_{f_\eps}\,d\x\leq\int_{\R^6} q^0\,f_\eps\,d\x\,d\q.
\]
Hence
\begin{equation}
\int_{\beta_{f_\eps}<1/\beta_{sup}} n_{f_\eps}\,d\x\leq \frac{1}{\beta_{sup}}\,\int_{\R^6} q^0\,{f_\eps}\,d\x\,d\q.\label{beta<1/betasup}
  \end{equation}
For $\beta_{f_\eps}>\beta_{sup}$,  we can use  Lemma \ref{lm:BCNS}, to deduce
\[\begin{split}
n_{f_\eps}-\int_{\R^3} {f_\eps}\,\frac{d\q}{q^0}&\geq \int_{\R^3} {f_\eps}\,d\q-\int_{\R^3} {f_\eps}\,\frac{d\q}{q^0}\geq C^{-1}\,\delta\,\int_{|\q|\geq \delta} {f_\eps}\,d\q,  \\
\end{split}
\]
for any $\delta>0$. If $\beta_{f_\eps}>\beta_{sup}$ then owing to \eqref{eq:sandwichK} we have that $\frac{K_1(\beta_{f_\eps})}{K_2(\beta_{f_\eps})}\geq 1-2/\beta_{sup}$. Let us abridge $\eta_{\beta_{sup}}:=2/\beta_{sup}$ in the sequel.
From \eqref{betadef} in that case
\[
n_{f_\eps}-\int_{\R^3} {f_\eps}\,\frac{d\q}{q^0}\leq \eta_{\beta_{sup}}\,n_{f_\eps}.
\]
Therefore, by taking $\delta=\eta_{\beta_{sup}}/C$, if $\beta_{f_\eps} \geq \beta_{sup}$, we find
\[
\int_{|\q|> \eta_{\beta_{sup}}/C} {f_\eps} \,d\q\leq \frac{1}{2}\,n_{f_\eps},\quad\mbox{that is}\ \int_{|\q|\leq \eta_{\beta_{sup}}/C} {f_\eps}\,d\q\geq \frac{1}{2}\,n_{f_\eps},
\]
and thus
\[\begin{split}
\int_{\beta_{f_\eps}>\beta_{sup}} n_{f_\eps}\,d\x &\leq 2\,\int_{|\q|\leq \eta_{\beta_{sup}}} {f_\eps}\,d\q\,d\x\\
& \leq 2\,\int_{|\q|\leq \eta_{\beta_{sup}}, \ {f_\eps}\leq F, \ |\x|\leq X} {f_\eps}\,d\q\,d\x+2\,\int_{{f_\eps}>F} {f_\eps}\,d\q\,d\x+2\,\int_{|x|\geq X} {f_\eps}\,d\q\,d\x\\
&\leq 2\,F\,X\,\eta_{\beta_{sup}}+2\,\left(\frac{1}{\log F}+\frac{1}{X}\right)\, \int_{\R^6} (|\x|+\log {f_\eps})\,{f_\eps}\,d\q\,d\x\\
&\leq \frac{C}{|\log \eta_{\beta_{sup}}|}\,\int_{\R^6} (|\x|+\log {f_\eps})\,{f_\eps}\,d\q\,d\x,
\end{split}
\]
by optimizing in $F$ and $X$.

Combining this part with \eqref{beta<1/betasup}, we deduce that
\begin{equation}
\int_{\beta_{f_\eps}<1/\beta_{sup}\ \mbox{or}\ \beta_{f_\eps}>\beta_{sup}} n_{f_\eps}\,d\x\leq \frac{C}{|\log \eta_{\beta_{sup}}|}\,\int_{\R^6} (q^0+|\x|+\log {f_\eps})\,{f_\eps}\,d\q\,d\x.\label{betaflowlarge}
\end{equation}

We may now turn to the estimate in the lemma.
First of all, we justify the replacement of $\tilde M$ by $M$
\[\begin{split}
&\int_{\R^6} \varphi(\q)\,\left|\frac{n_{f_\eps}}{{M}(\tilde{\beta}_{f_\eps})}\,e^{-{\tilde\beta}_{f_\eps} (\tilde u_{f_\eps})_\mu q^\mu}-\frac{n_{f_\eps}}{\tilde{M}(\tilde{\beta}_{f_\eps})} e^{-\tilde{\beta}_{f_\eps} \tilde (\tilde u_{f_\eps})_\mu q^\mu}\right|\, \frac{d\q}{q^0}d\x\\
&\quad\leq \int_{\R^6} n_{f_\eps} \frac{|\tilde M(\tilde\beta_{f_\eps})-M(\tilde\beta_{f_\eps})|}{\tilde M(\tilde\beta_{f_\eps})\,M(\tilde\beta_{f_\eps})}\,e^{-|\q|/(3\,\beta_{sup}\,L)}\,\frac{d\q}{q^0}\,d\x,
\end{split}
\]
as $\tilde\beta_{f_\eps}\geq 1/\beta_{sup}$ and since $|\tilde\u_{f_\eps}|\leq L$, $(\tilde u_{f_\eps})_\mu q^\mu \geq |\q|/(3\,L)$.
 From Lemma \ref{lcomp}, we have that 
\[
|\tilde M-M|\leq \Lambda(2R),
\]
and hence
\begin{equation}
\begin{split}
&  \int_{\R^6} \varphi(\q)\,\left|\frac{n_{f_\eps}}{{M}(\tilde{\beta}_{f_\eps})}\,e^{-{\tilde\beta}_{f_\eps} (\tilde u_{f_\eps})_\mu q^\mu}-\frac{n_{f_\eps}}{\tilde{M}(\tilde{\beta}_{f_\eps})} e^{-\tilde{\beta}_{f_\eps} (\tilde u_{f_\eps})_\mu q^\mu}\right|\, \frac{d\q}{q^0}d\x\\
&\qquad  \leq C\,\Lambda(2R)\,L^2\,\beta_{sup}^2\,\int_{\R^6}{f_\eps}\,d\q\,d\x.
\end{split}
\label{tildeMtoM}
  \end{equation}
The next step is to replace $\tilde u_{f_\eps}$ by $u_{f_\eps}$. Note that $\tilde u_{f_\eps}=u_{f_\eps}$ whenever $|\u_{f_\eps}|\leq L$. Thus
\[\begin{split}
&\int_{\R^6} \varphi(\q)\,\left|\frac{n_{f_\eps}}{{M}(\tilde{\beta}_{f_\eps})}\,e^{-{\tilde\beta}_{f_\eps} (u_{f_\eps})_\mu q^\mu}-\frac{n_{f_\eps}}{{M}(\tilde{\beta}_{f_\eps})} e^{-\tilde{\beta}_{f_\eps} \tilde (\tilde u_{f_\eps})_\mu q^\mu}\right|\, \frac{d\q}{q^0}\,d\x\\
&\quad\leq\int_{\{\x \, / \,  |\u_{f_\eps}|>L\}} \frac{n_{f_\eps}}{{M}(\tilde{\beta}_{f_\eps})}
\int_{\R^3} \left(e^{-\tilde{\beta}_{f_\eps} (\tilde u_{f_\eps})_\mu q^\mu}+e^{-\tilde{\beta}_{f_\eps} (u_{f_\eps})_\mu q^\mu}\right)\,\frac{d\q}{q^0}\,d\x\\
&\quad\leq\int_{\{\x \, / \,  |\u_{f_\eps}|>L\}} n_{f_\eps}\,d\x\leq \frac{1}{L}\int_{\R^3} n_{f_\eps}\,u_{f_\eps}\,d\x.
\end{split}
\]
As a consequence
\begin{equation}
\begin{split}
\int_{\R^6} \varphi(\q)\,\left|\frac{n_{f_\eps}}{{M}(\tilde{\beta}_{f_\eps})}\,e^{-{\tilde\beta}_{f_\eps} (u_{f_\eps})_\mu q^\mu}-\frac{n_{f_\eps}}{{M}(\tilde{\beta}_{f_\eps})} e^{-\tilde{\beta}_{f_\eps} (\tilde u_{f_\eps})_\mu q^\mu}\right|\, \frac{d\q}{q^0}\,d\x \leq \frac{1}{L}\int_{\R^6} {f_\eps} \,d\q\,d\x.
  \end{split}
  \label{tildeutou}
  \end{equation}
With a similar calculation, one may remove $\varphi(\q)$ with
\begin{equation}
\begin{split}
  &\int_{\R^6} (1-\varphi(\q))\,\frac{n_{f_\eps}}{{M}(\tilde{\beta}_{f_\eps})}\,e^{-{\tilde\beta}_{f_\eps} (u_{f_\eps})_\mu q^\mu}\,\frac{d\q}{q^0}\,d\x\leq \frac{1}{2R}\,\int_{\R^6} {f_\eps}\,d\q\,d\x.\\
\end{split}\label{removephi}
\end{equation}
Finally we use \eqref{betaflowlarge} to compute
\[\begin{split}
&\int_{\R^6} \left|\frac{n_{f_\eps}}{{M}(\tilde{\beta}_{f_\eps})}\,e^{-{\beta}_{f_\eps} (u_{f_\eps})_\mu q^\mu}-\frac{n_{f_\eps}}{{M}({\beta}_{f_\eps})} e^{-\tilde{\beta}_{f_\eps} (u_{f_\eps})_\mu q^\mu}\right|\, \frac{d\q}{q^0}d\x\\
&\quad \le \int_{\{\x/\beta_{f_\eps} >\beta_{sup}\ \mbox{or}\ \beta_{f_\eps}<\beta_{sup}\}\times \R_{\q}^3} \frac{n_{f_\eps}}{{M}({\beta}_{sup})} e^{-{\beta}_{sup} (u_{f_\eps})_\mu q^\mu} \frac{d\q}{q^0}d\x \\
&\qquad +\int_{\{\x/\beta_{f_\eps} >\beta_{sup}\ \mbox{or}\ \beta_{f_\eps}<\beta_{sup}\}\times \R_{\q}^3} \frac{n_{f_\eps}}{{M}({\beta}_{f_\eps})} e^{-{\beta}_{f_\eps} (u_{f_\eps})_\mu q^\mu} \frac{d\q}{q^0}d\x .
\end{split}
\]
By the definition of $M$, we have that for any $\beta$
\[
\int_{\R^3} \frac{1}{{M}({\beta})} e^{-{\beta} (u_\eps)_\mu q^\mu} \frac{d\q}{q^0}\leq 1.
\]
Therefore by \eqref{betaflowlarge}
\begin{equation}
  \begin{split}
    &\int_{\R^6} \left|\frac{n_{f_\eps}}{{M}(\tilde{\beta}_{f_\eps})}\,e^{-{\tilde\beta}_{f_\eps} (u_{f_\eps})_\mu q^\mu}-\frac{n_{f_\eps}}{{M}({\beta}_{f_\eps})} e^{-{\beta}_{f_\eps} (u_{f_\eps})_\mu q^\mu}\right|\, \frac{d\q}{q^0}d\x\\
    &\qquad\leq \frac{C}{|\log \eta_{\beta_{sup}}|}\,\int_{\R^6} (q^0+|\x|+\log {f_\eps})\,{f_\eps}\,d\q\,d\x.
  \end{split}
  \label{tildebetatobeta}
\end{equation}
Combining \eqref{tildebetatobeta} with \eqref{tildeMtoM}, \eqref{tildeutou}, \eqref{removephi}, concludes the proof.
\end{proof}
In our next step, we apply Theorem \ref{t:1}. By the definition of the limit, $f_\eps\,\psi(\q)$ converges weakly to $f\,\psi(\q)$. By the uniqueness of limits in distributions,
\[
\int_{\R^3} f_\eps\,\psi(\q)\,d\q\longrightarrow \int_{\R^3} f\,\psi(\q)\,d\q,
\]
strongly in $L^1([0,\ T]\times\R^3)$ for any $T>0$ and any $\psi(\q)$ with $\psi(\q)/|\q|\rightarrow 0$ as $\q\rightarrow\infty$.

In particular this implies the following strong convergence
\begin{Lemma}
\label{conv_fields}
The following assertions hold true as $\eps \to 0$:
\begin{enumerate}
\item $n_{f_\eps} $ converges to $n_f$ strongly in $L^1([0,\ T]\times\R_{\x}^3)$. 
\item $n_{f_\eps} u_{f_\eps}^\mu$ converges to $n_f u_f^\mu$ strongly in $L^1([0,\ T]\times\R_{\x}^3)$.
\item Let $\mathcal{NV}:=\{t,\,\x\in \R_+\times\R_{\x}^3 / n_f(t,\x)>0\}$. Then $u_{f_\eps} \to u_f$ a.e. in $\mathcal{NV}$. 
\item $\beta_\eps \to \beta_f$ a.e. in $\mathcal{NV}$.
\end{enumerate}
\end{Lemma}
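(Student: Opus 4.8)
The plan is to read off all four statements from Theorem~\ref{t:1} applied to a few explicit weights, together with the entropy bound of Section~\ref{sec:4}. First I would note that the functions $1$, $q^j/q^0$ ($j=1,2,3$) and $1/q^0$ are smooth on $\R^3$ and bounded by $1$, so in particular $|\psi(\q)|/|\q|\to 0$ as $|\q|\to\infty$; hence Theorem~\ref{t:1}, in the form recorded right before the statement, gives
$$
N^\mu_{f_\eps}=\int_{\R^3}\frac{q^\mu}{q^0}\,f_\eps\,d\q\longrightarrow N^\mu_f,\qquad \int_{\R^3} f_\eps\,\frac{d\q}{q^0}\longrightarrow\int_{\R^3} f\,\frac{d\q}{q^0},
$$
strongly in $L^1([0,T]\times\R_{\x}^3)$. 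Since $n_{f_\eps}u^\mu_{f_\eps}=N^\mu_{f_\eps}$ by Definition~\ref{set2}, the first convergence is exactly assertion~(2), so nothing further is needed there.

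For (1) I would upgrade the $L^1$ convergence of $N^\mu_{f_\eps}$ to one for $n_{f_\eps}=\sqrt{N^\mu_{f_\eps}(N_{f_\eps})_\mu}$. Along a subsequence $N^\mu_{f_\eps}\to N^\mu_f$ a.e., hence $n_{f_\eps}\to n_f$ a.e. by continuity of the Minkowski norm on the forward timelike cone. To pass from a.e. to $L^1$ I would apply Vitali's theorem: tightness in $\x$ follows from $\int_{\R_{\x}^3}|\x|\,n_{f_\eps}\,d\x\le\int_{\R^6}|\x|\,f_\eps\,d\q\,d\x$, uniformly bounded on $[0,T]$ by Lemma~\ref{moment_estimates} (tightness in $t$ being automatic), and equi-integrability follows from Proposition~\ref{ease}, which bounds $\int_{\R_{\x}^3} n_{f_\eps}\log n_{f_\eps}\,d\x$ in terms of $\int_{\R^6} f_\eps\log f_\eps\,d\q\,d\x$, uniformly bounded in $\eps$ by Lemma~\ref{rhscontrol}; the negative part of $n\log n$ is absorbed as in Lemma~\ref{ll5} to produce a genuine bound on $\int n_{f_\eps}|\log n_{f_\eps}|$. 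Thus $n_{f_\eps}\to n_f$ in $L^1$ along the subsequence, and since the limit is always $n_f$ the whole fixed sequence converges.

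Assertions (3) and (4) are then pointwise quotient arguments on $\mathcal{NV}$. Passing if necessary to a further subsequence along which $N^\mu_{f_\eps}$, $n_{f_\eps}$ and $\int_{\R^3} f_\eps\,\frac{d\q}{q^0}$ all converge a.e., at every point of $\mathcal{NV}$ one has $n_{f_\eps}\to n_f>0$, hence $n_{f_\eps}>0$ for small $\eps$ and $u^\mu_{f_\eps}=N^\mu_{f_\eps}/n_{f_\eps}\to N^\mu_f/n_f=u^\mu_f$, which is (3). For (4), the ratio $\bigl(\int_{\R^3} f_\eps\,\frac{d\q}{q^0}\bigr)/n_{f_\eps}$ converges a.e. on $\mathcal{NV}$ to $\bigl(\int_{\R^3} f\,\frac{d\q}{q^0}\bigr)/n_f$, which lies in $[0,1)$ for a.e. such point (it equals $1$ only if $f$ concentrates at a single $\q$, impossible for $f\in L^1$); applying the continuous inverse of $\xi\mapsto K_1(\xi)/K_2(\xi)$ from Lemma~\ref{lm:BCNS} together with \eqref{betadef} yields $\beta_{f_\eps}\to\beta_f$ a.e. on $\mathcal{NV}$. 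The only genuinely delicate step is the equi-integrability needed in (1): one must exclude concentration of the spatial density $n_{f_\eps}$, and this is exactly where the $\eps$-uniform entropy control of Section~\ref{sec:4}, routed through Proposition~\ref{ease}, is indispensable — the moment bounds alone would not suffice.
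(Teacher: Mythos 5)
Your proof is correct and follows essentially the same strategy as the paper: apply the averaging lemma (Theorem~\ref{t:1}, identified with the weak limit $f$ via the remarks preceding the lemma) to the moment fields $N^\mu_{f_\eps}$ and $\int f_\eps\,d\q/q^0$, then derive assertions (1)--(4) by pointwise algebra and quotient arguments on $\mathcal{NV}$. Two small remarks. First, your route to (1) via Vitali together with the uniform $\int n|\log n|\,d\x$ bound (Proposition~\ref{ease} plus the Lemma~\ref{ll5} absorption of the negative part) is valid but heavier than necessary, and your closing claim that the $\eps$-uniform entropy control is ``indispensable'' for (1) overstates things: once one has a.e.\ convergence of $n_{f_\eps}$ along a subsequence and the pointwise domination $n_{f_\eps}\le N^0_{f_\eps}$ from Remark~\ref{varios1}, the generalized dominated convergence theorem (Fatou applied to $N^0_{f_\eps}+N^0_f-|n_{f_\eps}-n_f|\ge 0$, using that $N^0_{f_\eps}\to N^0_f$ both in $L^1$ and a.e.) already yields $n_{f_\eps}\to n_f$ in $L^1$ with no appeal to entropy; the entropy estimates are of course essential elsewhere in the limit passage, but not at this particular step. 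Second, your explicit observation that $\bigl(\int_{\R^3} f\,d\q/q^0\bigr)/n_f\in(0,1)$ a.e.\ on $\mathcal{NV}$, needed to apply the inverse of $K_1/K_2$ from Lemma~\ref{lm:BCNS}, is a genuine point of care that the paper leaves implicit.
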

\begin{proof}
For the first point, taking $\psi(\q)=\q/q^0$,
$$
\int_{\R_{\q}^3} q^\mu f_\eps \, \frac{d\q}{q^0} \rightarrow \int_{\R_{\q}^3} q^\mu f \, \frac{d\q}{q^0} \quad \mbox{strongly in}\ L^1([0,\ T]\times\R_{\x}^3),
$$
for any $T>0$.

Then $n_{f_\eps} \to n_f$ in $L^2(\R_{\x}^3)$ and the claim follows. We show in the same way that $n_{f_\eps} u_{f_\eps}^\mu \to n_f u_f^\mu$ in $L^1(\R_{\x}^3)$. Our third statement follows as usual from the second, possibly extracting further subsequences to have the $a.e.$ convergence of $n_{f_\eps}\,u_{f_\eps}^\mu$. For the final statement, we show as before that
$$
\frac{1}{n_{f_\eps}} \int_{\R_{\q}^3} f_\eps \frac{d\q}{q^0} \rightarrow \frac{1}{n_f}\int_{\R_{\q}^3} f \frac{d\q}{q^0} \quad \mbox{a.e. in}\ \mathcal{NV}.
$$
Hence $K_1(\beta_{f_\eps})/K_2(\beta_{f_\eps}) \to K_1(\beta_f)/K_2(\beta_f)$ a.e. $\mathcal{NV}$ which implies the result.
\end{proof}
We can now derive the limit of $J[f_\eps]$ as per
\begin{Corollary}
We have  that $J[f_\eps]\to J[f]$, a.e. in $\R_{\x}^3\times\R_{\q}^3$, as $\eps \to 0$.\label{limitJeps}
\end{Corollary}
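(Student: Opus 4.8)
The plan is to split the base space $\R_+\times\R_{\x}^3$ into the non-vacuum set $\mathcal{NV}=\{(t,\x):n_f(t,\x)>0\}$ and its complement $\mathcal{Z}=\{(t,\x):n_f(t,\x)=0\}$, recalling that
$$
J[f_\eps]=\frac{n_{f_\eps}}{M(\beta_{f_\eps})}\,e^{-\beta_{f_\eps}(u_{f_\eps})_\mu q^\mu},
$$
with the convention $J[f]=0$ wherever $n_f=0$. A preliminary point is that on $\mathcal{NV}$ the inverse temperature $\beta_f$ is a genuine number in $(0,\infty)$: there one has $0<\int_{\R^3}f\,d\q/q^0<n_f$, the lower bound because $n_f>0$ prevents $f$ from vanishing a.e., and the strict upper bound because $n_f^2-(\int f\,d\q/q^0)^2=\int\!\int \frac{q^\mu(q')_\mu-1}{q^0(q')^0}\,ff'\,d\q\,d\q'>0$ unless $f$ is concentrated at a single momentum (impossible for $f\in L^1$); hence Lemma \ref{lm:BCNS} yields $\beta_f\in(0,\infty)$ and $M(\beta_f)>0$, so $J[f]$ is well defined a.e. on $\mathcal{NV}\times\R_{\q}^3$.

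On $\mathcal{NV}\times\R_{\q}^3$ I would pass to the limit directly in the formula. By Lemma \ref{conv_fields}, up to a further subsequence (not relabelled), $n_{f_\eps}\to n_f$, $u_{f_\eps}^\mu\to u_f^\mu$ and $\beta_{f_\eps}\to\beta_f$ a.e.\ on $\mathcal{NV}$. Since $M$ is continuous and strictly positive on $(0,\infty)$ and the map $(n,\beta,v^\mu)\mapsto \frac{n}{M(\beta)}\,e^{-\beta v_\mu q^\mu}$ is continuous at every point with $\beta>0$ (note $v_\mu q^\mu=\sqrt{1+|v|^2}\sqrt{1+|\q|^2}-v\cdot\q$ is continuous in $v$), it follows at once that $J[f_\eps]\to J[f]$ a.e.\ on $\mathcal{NV}\times\R_{\q}^3$.

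On $\mathcal{Z}\times\R_{\q}^3$, where $J[f]\equiv0$, the continuity argument fails, so instead I would control $J[f_\eps]$ in $L^1\!\left(\tfrac{d\q}{q^0}\,d\x\right)$. By Lemma \ref{Jmoments}\,\eqref{id:7} applied to $J[f_\eps]$ and the bound $K_1/K_2<1$ from Lemma \ref{lm:BCNS},
$$
\int_{\R_{\q}^3}J[f_\eps]\,\frac{d\q}{q^0}=n_{f_\eps}\,\frac{K_1(\beta_{f_\eps})}{K_2(\beta_{f_\eps})}\le n_{f_\eps},
$$
hence $\int_{\mathcal{Z}}\!\int_{\R_{\q}^3}J[f_\eps]\,\frac{d\q}{q^0}\,d\x\le\int_{\mathcal{Z}}n_{f_\eps}\,d\x\to\int_{\mathcal{Z}}n_f\,d\x=0$ by the strong $L^1$ convergence $n_{f_\eps}\to n_f$ of Lemma \ref{conv_fields}(1). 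Thus $J[f_\eps]\to 0=J[f]$ in $L^1\!\left(\mathcal{Z}\times\R_{\q}^3,\tfrac{d\q}{q^0}\,d\x\right)$, and therefore a.e.\ on $\mathcal{Z}\times\R_{\q}^3$ along a further subsequence. Combining the two regions gives $J[f_\eps]\to J[f]$ a.e.\ in $\R_{\x}^3\times\R_{\q}^3$ for a.e.\ $t\in[0,T]$, which is the claim.

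The only real obstacle is the vacuum set $\mathcal{Z}$: there the naive ``pass to the limit inside the exponential'' reasoning is unavailable because $\beta_{f_\eps}$ need not converge and $\beta_f$ is not even defined. The resolution is that the integrated identity $\int J[f_\eps]\,d\q/q^0=n_{f_\eps}\,K_1/K_2(\beta_{f_\eps})$ collapses the whole relaxation operator onto $n_{f_\eps}$, whose already established strong $L^1$ convergence to $n_f=0$ forces $J[f_\eps]\to 0$ on $\mathcal{Z}$ regardless of the behaviour of the (untruncated) temperature and velocity.
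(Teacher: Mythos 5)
Your proof is correct and follows essentially the same strategy as the paper: split into the non-vacuum set $\mathcal{NV}$ (where Lemma \ref{conv_fields} gives pointwise a.e.\ convergence of the thermodynamic fields and hence of $J[f_\eps]$) and the vacuum set (where an integrated-in-$\q$ identity collapses $J[f_\eps]$ onto a macroscopic quantity already known to converge strongly in $L^1$ to zero). The only difference is cosmetic: you use $\int J[f_\eps]\,\tfrac{d\q}{q^0}=n_{f_\eps}\,\tfrac{K_1}{K_2}(\beta_{f_\eps})\le n_{f_\eps}$ and the strong $L^1$ convergence of $n_{f_\eps}$, whereas the paper uses $\int J[f_\eps]\,d\q=n_{f_\eps}u^0_{f_\eps}$ and the strong $L^1$ convergence of $n_{f_\eps}u^0_{f_\eps}$; both are supplied by Lemma \ref{conv_fields}, so the arguments are interchangeable.
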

\begin{proof}
Thanks to Lemma \ref{conv_fields}, $J[f_\eps]\to J[f]$ on $\mathcal{NV}\times \R_{\q}^3$ already. Note now that by Lemma~\ref{set2} 
$$
\int_{\R_{\q}^3} J[f_\eps] \, d\q= n_{f_\eps} u_{f_\eps}^{0}.
$$
Passing to the limit at both sides of the previous relation we get that 
$$
\int_{\R_{\q}^3} \mathcal{J} \, d\q= n_f u_f^{0}.
$$
This implies that $J[f_\eps] \to 0$ a.e. in $\mathcal{NV}^c\times \R_{\q}^3$.
\end{proof}
By combining Corollary \ref{limitJeps} with Lemma \ref{Jtilde-J}, we finally deduce that $\tilde J[f_\eps]$ converges to $J[f]$ $a.e.$ thus concluding our proof.

%

 \end{document}